\newtheorem{theorem}{Theorem}
\newtheorem{acknowledgement}[theorem]{Acknowledgement}
\newtheorem{axiom}[theorem]{Axiom}
\newtheorem{condition}[theorem]{Condition}
\newtheorem{conjecture}[theorem]{Conjecture}
\newtheorem{corollary}[theorem]{Corollary}
\newtheorem{definition}[theorem]{Definition}
\newtheorem{example}[theorem]{Example}
\newtheorem{exercise}[theorem]{Exercise}
\newtheorem{lemma}[theorem]{Lemma}
\newtheorem{proposition}[theorem]{Proposition}
\newtheorem{remark}[theorem]{Remark}
\newenvironment{proof}[1][Proof]{\noindent\textbf{#1.} }{\ \rule{0.5em}{0.5em}}
\chardef\@x10\chardef\@xv60
\def\tcitime{
\def\@time{%
  \@minute\time\@hour\@minute\divide\@hour\@xv
  \ifnum\@hour<\@x 0\fi\the\@hour:%
  \multiply\@hour\@xv\advance\@minute-\@hour
  \ifnum\@minute<\@x 0\fi\the\@minute
  }}%
\def\x@hyperref#1#2#3{%
   \catcode`\~ = 12
   \catcode`\$ = 12
   \catcode`\_ = 12
   \catcode`\# = 12
   \catcode`\& = 12
   \y@hyperref{#1}{#2}{#3}%
}
\def\y@hyperref#1#2#3#4{%
   #2\ref{#4}#3
   \catcode`\~ = 13
   \catcode`\$ = 3
   \catcode`\_ = 8
   \catcode`\# = 6
   \catcode`\& = 4
}
\def\QCTOpt[#1]#2{%
  \def\QCTOptB{#1}
  \def\QCTOptA{#2}
}
\def\QCTNOpt#1{%
  \def\QCTOptA{#1}
  \let\QCTOptB\empty
}
\def\Qct{%
  \@ifnextchar[{%
    \QCTOpt}{\QCTNOpt}
}
\def\QCBOpt[#1]#2{%
  \def\QCBOptB{#1}%
  \def\QCBOptA{#2}%
}
\def\QCBNOpt#1{%
  \def\QCBOptA{#1}%
  \let\QCBOptB\empty
}
\def\Qcb{%
  \@ifnextchar[{%
    \QCBOpt}{\QCBNOpt}%
}
\def\PrepCapArgs{%
  \ifx\QCBOptA\empty
    \ifx\QCTOptA\empty
      {}%
    \else
      \ifx\QCTOptB\empty
        {\QCTOptA}%
      \else
        [\QCTOptB]{\QCTOptA}%
      \fi
    \fi
  \else
    \ifx\QCBOptA\empty
      {}%
    \else
      \ifx\QCBOptB\empty
        {\QCBOptA}%
      \else
        [\QCBOptB]{\QCBOptA}%
      \fi
    \fi
  \fi
}
\def\GRAPHICSPS#1{%
 \ifcase\GRAPHICSTYPE
   \special{ps: #1}%
 \or
   \special{language "PS", include "#1"}%
 \fi
}%
\def\graffile#1#2#3#4{%
    \bgroup
       \@inlabelfalse
       \leavevmode
       \@ifundefined{bbl@deactivate}{\def~{\string~}}{\activesoff}%
        \raise -#4 \BOXTHEFRAME{%
           \hbox to #2{\raise #3\hbox to #2{\null #1\hfil}}}%
    \egroup
}%
\def\draftbox#1#2#3#4{%
 \leavevmode\raise -#4 \hbox{%
  \frame{\rlap{\protect\tiny #1}\hbox to #2%
   {\vrule height#3 width\z@ depth\z@\hfil}%
  }%
 }%
}%
\let\nographics=\@msidraft
\newif\ifwasdraft
\def\GRAPHIC#1#2#3#4#5{%
   \ifnum\@msidraft=\@ne\draftbox{#2}{#3}{#4}{#5}%
   \else\graffile{#1}{#3}{#4}{#5}%
   \fi
}
\def\addtoLaTeXparams#1{%
    \edef\LaTeXparams{\LaTeXparams #1}}%
\newif\ifBoxFrame \BoxFramefalse
\newif\ifOverFrame \OverFramefalse
\newif\ifUnderFrame \UnderFramefalse
\def\BOXTHEFRAME#1{%
   \hbox{%
      \ifBoxFrame
         \frame{#1}%
      \else
         {#1}%
      \fi
   }%
}
\def\doFRAMEparams#1{\BoxFramefalse\OverFramefalse\UnderFramefalse\readFRAMEparams#1\end}%
\def\readFRAMEparams#1{%
 \ifx#1\end%
  \let\next=\relax
  \else
  \ifx#1i\dispkind=\z@\fi
  \ifx#1d\dispkind=\@ne\fi
  \ifx#1f\dispkind=\tw@\fi
  \ifx#1t\addtoLaTeXparams{t}\fi
  \ifx#1b\addtoLaTeXparams{b}\fi
  \ifx#1p\addtoLaTeXparams{p}\fi
  \ifx#1h\addtoLaTeXparams{h}\fi
  \ifx#1X\BoxFrametrue\fi
  \ifx#1O\OverFrametrue\fi
  \ifx#1U\UnderFrametrue\fi
  \ifx#1w
    \ifnum\@msidraft=1\wasdrafttrue\else\wasdraftfalse\fi
    \@msidraft=\@ne
  \fi
  \let\next=\readFRAMEparams
  \fi
 \next
 }%
\def\IFRAME#1#2#3#4#5#6{%
      \bgroup
      \let\QCTOptA\empty
      \let\QCTOptB\empty
      \let\QCBOptA\empty
      \let\QCBOptB\empty
      #6%
      \parindent=0pt
      \leftskip=0pt
      \rightskip=0pt
      \setbox0=\hbox{\QCBOptA}%
      \@tempdima=#1\relax
      \ifOverFrame
          \typeout{This is not implemented yet}%
          \show\HELP
      \else
         \ifdim\wd0>\@tempdima
            \advance\@tempdima by \@tempdima
            \ifdim\wd0 >\@tempdima
               \setbox1 =\vbox{%
                  \unskip\hbox to \@tempdima{\hfill\GRAPHIC{#5}{#4}{#1}{#2}{#3}\hfill}%
                  \unskip\hbox to \@tempdima{\parbox[b]{\@tempdima}{\QCBOptA}}%
               }%
               \wd1=\@tempdima
            \else
               \textwidth=\wd0
               \setbox1 =\vbox{%
                 \noindent\hbox to \wd0{\hfill\GRAPHIC{#5}{#4}{#1}{#2}{#3}\hfill}\\%
                 \noindent\hbox{\QCBOptA}%
               }%
               \wd1=\wd0
            \fi
         \else
            \ifdim\wd0>0pt
              \hsize=\@tempdima
              \setbox1=\vbox{%
                \unskip\GRAPHIC{#5}{#4}{#1}{#2}{0pt}%
                \break
                \unskip\hbox to \@tempdima{\hfill \QCBOptA\hfill}%
              }%
              \wd1=\@tempdima
           \else
              \hsize=\@tempdima
              \setbox1=\vbox{%
                \unskip\GRAPHIC{#5}{#4}{#1}{#2}{0pt}%
              }%
              \wd1=\@tempdima
           \fi
         \fi
         \@tempdimb=\ht1
         \advance\@tempdimb by -#2
         \advance\@tempdimb by #3
         \leavevmode
         \raise -\@tempdimb \hbox{\box1}%
      \fi
      \egroup%
}%
\def\DFRAME#1#2#3#4#5{%
  \hfil\break
  \bgroup
     \leftskip\@flushglue
     \rightskip\@flushglue
     \parindent\z@
     \parfillskip\z@skip
     \let\QCTOptA\empty
     \let\QCTOptB\empty
     \let\QCBOptA\empty
     \let\QCBOptB\empty
     \vbox\bgroup
        \ifOverFrame
           #5\QCTOptA\par
        \fi
        \GRAPHIC{#4}{#3}{#1}{#2}{\z@}%
        \ifUnderFrame
           \break#5\QCBOptA
        \fi
     \egroup
   \egroup
   \break
}%
\def\FFRAME#1#2#3#4#5#6#7{%
  \@ifundefined{floatstyle}
    {
     \begin{figure}[#1]%
    }
    {
     \ifx#1h
      \begin{figure}[H]%
     \else
      \begin{figure}[#1]%
     \fi
    }
  \let\QCTOptA\empty
  \let\QCTOptB\empty
  \let\QCBOptA\empty
  \let\QCBOptB\empty
  \ifOverFrame
    #4
    \ifx\QCTOptA\empty
    \else
      \ifx\QCTOptB\empty
        \caption{\QCTOptA}%
      \else
        \caption[\QCTOptB]{\QCTOptA}%
      \fi
    \fi
    \ifUnderFrame\else
      \label{#5}%
    \fi
  \else
    \UnderFrametrue%
  \fi
  \begin{center}\GRAPHIC{#7}{#6}{#2}{#3}{\z@}\end{center}%
  \ifUnderFrame
    #4
    \ifx\QCBOptA\empty
      \caption{}%
    \else
      \ifx\QCBOptB\empty
        \caption{\QCBOptA}%
      \else
        \caption[\QCBOptB]{\QCBOptA}%
      \fi
    \fi
    \label{#5}%
  \fi
  \end{figure}%
 }%
\def\makeactives{
  \catcode`\"=\active
  \catcode`\;=\active
  \catcode`\:=\active
  \catcode`\'=\active
  \catcode`\~=\active
}
   \gdef\activesoff{%
      \def"{\string"}%
      \def;{\string;}%
      \def:{\string:}%
      \def'{\string'}%
      \def~{\string~}%
    }
\def\FRAME#1#2#3#4#5#6#7#8{%
 \bgroup
 \ifnum\@msidraft=\@ne
   \wasdrafttrue
 \else
   \wasdraftfalse%
 \fi
 \def\LaTeXparams{}%
 \dispkind=\z@
 \def\LaTeXparams{}%
 \doFRAMEparams{#1}%
 \ifnum\dispkind=\z@\IFRAME{#2}{#3}{#4}{#7}{#8}{#5}\else
  \ifnum\dispkind=\@ne\DFRAME{#2}{#3}{#7}{#8}{#5}\else
   \ifnum\dispkind=\tw@
    \edef\@tempa{\noexpand\FFRAME{\LaTeXparams}}%
    \@tempa{#2}{#3}{#5}{#6}{#7}{#8}%
    \fi
   \fi
  \fi
  \ifwasdraft\@msidraft=1\else\@msidraft=0\fi{}%
  \egroup
 }%
\def\TEXUX#1{"texux"}
\long\def\QQQ#1#2{%
     \long\expandafter\def\csname#1\endcsname{#2}}%
\long\def\QQA#1#2{}%
\def\QTR#1#2{{\csname#1\endcsname {#2}}}%
\def\EXPAND#1[#2]#3{}%
\def\NOEXPAND#1[#2]#3{}%
\def\LaTeXparent#1{}%
\def\ChildStyles#1{}%
\def\ChildDefaults#1{}%
\def\QTagDef#1#2#3{}%
  \providecommand{\UNICODE}[2][]{\protect\rule{.1in}{.1in}}
  \providecommand{\U}[1]{\protect\rule{.1in}{.1in}}
\def\QQfnmark#1{\footnotemark}
 \def\abstract{%
  \if@twocolumn
   \section*{Abstract (Not appropriate in this style!)}%
   \else \small
   \begin{center}{\bf Abstract\vspace{-.5em}\vspace{\z@}}\end{center}%
   \quotation
   \fi
  }%
   \def\registered{\relax\ifmmode{}\r@gistered
                    \else$\m@th\r@gistered$\fi}%
 \def\r@gistered{^{\ooalign
  {\hfil\raise.07ex\hbox{$\scriptstyle\rm\text{R}$}\hfil\crcr
  \mathhexbox20D}}}}{}%
\newdimen\theight
\def\newfmtname{LaTeX2e}
  \DeclareOldFontCommand{\rm}{\normalfont\rmfamily}{\mathrm}
  \DeclareOldFontCommand{\sf}{\normalfont\sffamily}{\mathsf}
  \DeclareOldFontCommand{\tt}{\normalfont\ttfamily}{\mathtt}
  \DeclareOldFontCommand{\bf}{\normalfont\bfseries}{\mathbf}
  \DeclareOldFontCommand{\it}{\normalfont\itshape}{\mathit}
  \DeclareOldFontCommand{\sl}{\normalfont\slshape}{\@nomath\sl}
  \DeclareOldFontCommand{\sc}{\normalfont\scshape}{\@nomath\sc}
\def\alpha{{\Greekmath 010B}}%
\def\beta{{\Greekmath 010C}}%
\def\gamma{{\Greekmath 010D}}%
\def\delta{{\Greekmath 010E}}%
\def\epsilon{{\Greekmath 010F}}%
\def\zeta{{\Greekmath 0110}}%
\def\eta{{\Greekmath 0111}}%
\def\theta{{\Greekmath 0112}}%
\def\iota{{\Greekmath 0113}}%
\def\kappa{{\Greekmath 0114}}%
\def\lambda{{\Greekmath 0115}}%
\def\mu{{\Greekmath 0116}}%
\def\nu{{\Greekmath 0117}}%
\def\xi{{\Greekmath 0118}}%
\def\pi{{\Greekmath 0119}}%
\def\rho{{\Greekmath 011A}}%
\def\sigma{{\Greekmath 011B}}%
\def\tau{{\Greekmath 011C}}%
\def\upsilon{{\Greekmath 011D}}%
\def\phi{{\Greekmath 011E}}%
\def\chi{{\Greekmath 011F}}%
\def\psi{{\Greekmath 0120}}%
\def\omega{{\Greekmath 0121}}%
\def\varepsilon{{\Greekmath 0122}}%
\def\vartheta{{\Greekmath 0123}}%
\def\varpi{{\Greekmath 0124}}%
\def\varrho{{\Greekmath 0125}}%
\def\varsigma{{\Greekmath 0126}}%
\def\varphi{{\Greekmath 0127}}%
\def\nabla{{\Greekmath 0272}}
\def\FindBoldGroup{%
   {\setbox0=\hbox{$\mathbf{x\global\edef\theboldgroup{\the\mathgroup}}$}}%
}
\def\Greekmath#1#2#3#4{%
    \if@compatibility
        \ifnum\mathgroup=\symbold
           \mathchoice{\mbox{\boldmath$\displaystyle\mathchar"#1#2#3#4$}}%
                      {\mbox{\boldmath$\textstyle\mathchar"#1#2#3#4$}}%
                      {\mbox{\boldmath$\scriptstyle\mathchar"#1#2#3#4$}}%
                      {\mbox{\boldmath$\scriptscriptstyle\mathchar"#1#2#3#4$}}%
        \else
           \mathchar"#1#2#3#4%
        \fi
    \else
        \FindBoldGroup
        \ifnum\mathgroup=\theboldgroup 
           \mathchoice{\mbox{\boldmath$\displaystyle\mathchar"#1#2#3#4$}}%
                      {\mbox{\boldmath$\textstyle\mathchar"#1#2#3#4$}}%
                      {\mbox{\boldmath$\scriptstyle\mathchar"#1#2#3#4$}}%
                      {\mbox{\boldmath$\scriptscriptstyle\mathchar"#1#2#3#4$}}%
        \else
           \mathchar"#1#2#3#4%
        \fi
      \fi}
\newif\ifGreekBold  \GreekBoldfalse
\let\SAVEPBF=\pbf
\def\pbf{\GreekBoldtrue\SAVEPBF}%
  \newcounter{equationnumber}
  \def\mathletters{%
     \addtocounter{equation}{1}
     \edef\@currentlabel{\theequation}%
     \setcounter{equationnumber}{\c@equation}
     \setcounter{equation}{0}%
     \edef\theequation{\@currentlabel\noexpand\alph{equation}}%
  }
    \def\BibTeX{{\rm B\kern-.05em{\sc i\kern-.025em b}\kern-.08em
                 T\kern-.1667em\lower.7ex\hbox{E}\kern-.125emX}}}{}%
\def\AmS{{\protect\usefont{OMS}{cmsy}{m}{n}%
                A\kern-.1667em\lower.5ex\hbox{M}\kern-.125emS}}}{}%
\def\@@eqncr{\let\@tempa\relax
    \ifcase\@eqcnt \def\@tempa{& & &}\or \def\@tempa{& &}%
      \else \def\@tempa{&}\fi
     \@tempa
     \if@eqnsw
        \iftag@
           \@taggnum
        \else
           \@eqnnum\stepcounter{equation}%
        \fi
     \fi
     \global\tag@false
     \global\@eqnswtrue
     \global\@eqcnt\z@\cr}
\def\TCItag{\@ifnextchar*{\@TCItagstar}{\@TCItag}}
\def\@TCItag#1{%
    \global\tag@true
    \global\def\@taggnum{(#1)}}
\def\@TCItagstar*#1{%
    \global\tag@true
    \global\def\@taggnum{#1}}
\def\ExitTCILatex{\makeatother }
\let\DOTSI\relax
\def\RIfM@{\relax\ifmmode}%
\def\FN@{\futurelet\next}%
\def\iint{\DOTSI\intno@\tw@\FN@\ints@}%
\def\iiint{\DOTSI\intno@\thr@@\FN@\ints@}%
\def\iiiint{\DOTSI\intno@4 \FN@\ints@}%
\def\idotsint{\DOTSI\intno@\z@\FN@\ints@}%
\def\ints@{\findlimits@\ints@@}%
\newif\iflimtoken@
\newif\iflimits@
\def\findlimits@{\limtoken@true\ifx\next\limits\limits@true
 \else\ifx\next\nolimits\limits@false\else
 \limtoken@false\ifx\ilimits@\nolimits\limits@false\else
 \ifinner\limits@false\else\limits@true\fi\fi\fi\fi}%
\def\multint@{\int\ifnum\intno@=\z@\intdots@                          
 \else\intkern@\fi                                                    
 \ifnum\intno@>\tw@\int\intkern@\fi                                   
 \ifnum\intno@>\thr@@\int\intkern@\fi                                 
 \int}
\def\multintlimits@{\intop\ifnum\intno@=\z@\intdots@\else\intkern@\fi
 \ifnum\intno@>\tw@\intop\intkern@\fi
 \ifnum\intno@>\thr@@\intop\intkern@\fi\intop}%
\def\intic@{%
    \mathchoice{\hskip.5em}{\hskip.4em}{\hskip.4em}{\hskip.4em}}%
\def\negintic@{\mathchoice
 {\hskip-.5em}{\hskip-.4em}{\hskip-.4em}{\hskip-.4em}}%
\def\ints@@{\iflimtoken@                                              
 \def\ints@@@{\iflimits@\negintic@
   \mathop{\intic@\multintlimits@}\limits                             
  \else\multint@\nolimits\fi                                          
  \eat@}
 \else                                                                
 \def\ints@@@{\iflimits@\negintic@
  \mathop{\intic@\multintlimits@}\limits\else
  \multint@\nolimits\fi}\fi\ints@@@}%
\def\intkern@{\mathchoice{\!\!\!}{\!\!}{\!\!}{\!\!}}%
\def\plaincdots@{\mathinner{\cdotp\cdotp\cdotp}}%
\def\intdots@{\mathchoice{\plaincdots@}%
 {{\cdotp}\mkern1.5mu{\cdotp}\mkern1.5mu{\cdotp}}%
 {{\cdotp}\mkern1mu{\cdotp}\mkern1mu{\cdotp}}%
 {{\cdotp}\mkern1mu{\cdotp}\mkern1mu{\cdotp}}}%
\def\RIfM@{\relax\protect\ifmmode}
\def\text{\RIfM@\expandafter\text@\else\expandafter\mbox\fi}
\let\nfss@text\text
\def\text@#1{\mathchoice
   {\textdef@\displaystyle\f@size{#1}}%
   {\textdef@\textstyle\tf@size{\firstchoice@false #1}}%
   {\textdef@\textstyle\sf@size{\firstchoice@false #1}}%
   {\textdef@\textstyle \ssf@size{\firstchoice@false #1}}%
   \glb@settings}
\def\textdef@#1#2#3{\hbox{{%
                    \everymath{#1}%
                    \let\f@size#2\selectfont
                    #3}}}
\newif\iffirstchoice@
\def\Let@{\relax\iffalse{\fi\let\\=\cr\iffalse}\fi}%
\def\vspace@{\def\vspace##1{\crcr\noalign{\vskip##1\relax}}}%
\def\multilimits@{\bgroup\vspace@\Let@
 \baselineskip\fontdimen10 \scriptfont\tw@
 \advance\baselineskip\fontdimen12 \scriptfont\tw@
 \lineskip\thr@@\fontdimen8 \scriptfont\thr@@
 \lineskiplimit\lineskip
 \vbox\bgroup\ialign\bgroup\hfil$\m@th\scriptstyle{##}$\hfil\crcr}%
\def\Sb{_\multilimits@}%
\def\endSb{\crcr\egroup\egroup\egroup}%
\def\Sp{^\multilimits@}%
\newdimen\ex@
\def\rightarrowfill@#1{$#1\m@th\mathord-\mkern-6mu\cleaders
 \hbox{$#1\mkern-2mu\mathord-\mkern-2mu$}\hfill
 \mkern-6mu\mathord\rightarrow$}%
\def\leftarrowfill@#1{$#1\m@th\mathord\leftarrow\mkern-6mu\cleaders
 \hbox{$#1\mkern-2mu\mathord-\mkern-2mu$}\hfill\mkern-6mu\mathord-$}%
\def\leftrightarrowfill@#1{$#1\m@th\mathord\leftarrow
\mkern-6mu\cleaders
 \hbox{$#1\mkern-2mu\mathord-\mkern-2mu$}\hfill
 \mkern-6mu\mathord\rightarrow$}%
\def\overrightarrow{\mathpalette\overrightarrow@}%
\def\overrightarrow@#1#2{\vbox{\ialign{##\crcr\rightarrowfill@#1\crcr
 \noalign{\kern-\ex@\nointerlineskip}$\m@th\hfil#1#2\hfil$\crcr}}}%
\def\overleftarrow{\mathpalette\overleftarrow@}%
\def\overleftarrow@#1#2{\vbox{\ialign{##\crcr\leftarrowfill@#1\crcr
 \noalign{\kern-\ex@\nointerlineskip}$\m@th\hfil#1#2\hfil$\crcr}}}%
\def\overleftrightarrow{\mathpalette\overleftrightarrow@}%
\def\overleftrightarrow@#1#2{\vbox{\ialign{##\crcr
   \leftrightarrowfill@#1\crcr
 \noalign{\kern-\ex@\nointerlineskip}$\m@th\hfil#1#2\hfil$\crcr}}}%
\def\underrightarrow{\mathpalette\underrightarrow@}%
\def\underrightarrow@#1#2{\vtop{\ialign{##\crcr$\m@th\hfil#1#2\hfil
  $\crcr\noalign{\nointerlineskip}\rightarrowfill@#1\crcr}}}%
\def\underleftarrow{\mathpalette\underleftarrow@}%
\def\underleftarrow@#1#2{\vtop{\ialign{##\crcr$\m@th\hfil#1#2\hfil
  $\crcr\noalign{\nointerlineskip}\leftarrowfill@#1\crcr}}}%
\def\underleftrightarrow{\mathpalette\underleftrightarrow@}%
\def\underleftrightarrow@#1#2{\vtop{\ialign{##\crcr$\m@th
  \hfil#1#2\hfil$\crcr
 \noalign{\nointerlineskip}\leftrightarrowfill@#1\crcr}}}%
\def\qopnamewl@#1{\mathop{\operator@font#1}\nlimits@}
\let\nlimits@\displaylimits
\def\setboxz@h{\setbox\z@\hbox}
\def\varlim@#1#2{\mathop{\vtop{\ialign{##\crcr
 \hfil$#1\m@th\operator@font lim$\hfil\crcr
 \noalign{\nointerlineskip}#2#1\crcr
 \noalign{\nointerlineskip\kern-\ex@}\crcr}}}}
 \def\rightarrowfill@#1{\m@th\setboxz@h{$#1-$}\ht\z@\z@
  $#1\copy\z@\mkern-6mu\cleaders
  \hbox{$#1\mkern-2mu\box\z@\mkern-2mu$}\hfill
  \mkern-6mu\mathord\rightarrow$}
\def\leftarrowfill@#1{\m@th\setboxz@h{$#1-$}\ht\z@\z@
  $#1\mathord\leftarrow\mkern-6mu\cleaders
  \hbox{$#1\mkern-2mu\copy\z@\mkern-2mu$}\hfill
  \mkern-6mu\box\z@$}
\def\projlim{\qopnamewl@{proj\,lim}}
\def\injlim{\qopnamewl@{inj\,lim}}
\def\varinjlim{\mathpalette\varlim@\rightarrowfill@}
\def\varprojlim{\mathpalette\varlim@\leftarrowfill@}
\def\varliminf{\mathpalette\varliminf@{}}
\def\varliminf@#1{\mathop{\underline{\vrule\@depth.2\ex@\@width\z@
   \hbox{$#1\m@th\operator@font lim$}}}}
\def\varlimsup{\mathpalette\varlimsup@{}}
\def\varlimsup@#1{\mathop{\overline
  {\hbox{$#1\m@th\operator@font lim$}}}}
\def\align{\@verbatim \frenchspacing\@vobeyspaces \@alignverbatim
You are using the "align" environment in a style in which it is not defined.}
\let\csname endalign*\endcsname =\endtrivlist
\def\alignat{\@verbatim \frenchspacing\@vobeyspaces \@alignatverbatim
You are using the "alignat" environment in a style in which it is not defined.}
\let\csname endalignat*\endcsname =\endtrivlist
\def\xalignat{\@verbatim \frenchspacing\@vobeyspaces \@xalignatverbatim
You are using the "xalignat" environment in a style in which it is not defined.}
\let\csname endxalignat*\endcsname =\endtrivlist
\def\gather{\@verbatim \frenchspacing\@vobeyspaces \@gatherverbatim
You are using the "gather" environment in a style in which it is not defined.}
\let\csname endgather*\endcsname =\endtrivlist
\def\multiline{\@verbatim \frenchspacing\@vobeyspaces \@multilineverbatim
You are using the "multiline" environment in a style in which it is not defined.}
\let\csname endmultiline*\endcsname =\endtrivlist
\def\arrax{\@verbatim \frenchspacing\@vobeyspaces \@arraxverbatim
You are using a type of "array" construct that is only allowed in AmS-LaTeX.}
\def\tabulax{\@verbatim \frenchspacing\@vobeyspaces \@tabulaxverbatim
You are using a type of "tabular" construct that is only allowed in AmS-LaTeX.}
\let\csname endarrax*\endcsname =\endtrivlist
\let\csname endtabulax*\endcsname =\endtrivlist
 \def\endequation{%
     \ifmmode\ifinner 
      \iftag@
        \addtocounter{equation}{-1} 
        $\hfil
           \displaywidth\linewidth\@taggnum\egroup \endtrivlist
        \global\tag@false
        \global\@ignoretrue
      \else
        $\hfil
           \displaywidth\linewidth\@eqnnum\egroup \endtrivlist
        \global\tag@false
        \global\@ignoretrue
      \fi
     \else
      \iftag@
        \addtocounter{equation}{-1} 
        \eqno \hbox{\@taggnum}
        \global\tag@false%
        $$\global\@ignoretrue
      \else
        \eqno \hbox{\@eqnnum}
        $$\global\@ignoretrue
      \fi
     \fi\fi
 }
 \newif\iftag@ \tag@false
 \def\TCItag{\@ifnextchar*{\@TCItagstar}{\@TCItag}}
 \def\@TCItag#1{%
     \global\tag@true
     \global\def\@taggnum{(#1)}}
 \def\@TCItagstar*#1{%
     \global\tag@true
     \global\def\@taggnum{#1}}
     \def\tag{\@ifnextchar*{\@tagstar}{\@tag}}
     \def\@tag#1{%
         \global\tag@true
         \global\def\@taggnum{(#1)}}
     \def\@tagstar*#1{%
         \global\tag@true
         \global\def\@taggnum{#1}}
\begin{document}

\title{Partial differential equations driven by rough paths\thanks{%
Much of this work has been carried out at the Mittag-Leffler Institute. The
authors would like to express their gratitude to the Institute, as well as
to the organizers of the SPDE\ program (2007) for their invitation. The
first author is partially supported by a Leverhulme Fellowship. Both authors
are supported by EPSRC grant EP/E048609/1.}}
\author{Michael Caruana \and Peter Friz}
\maketitle

\begin{abstract}
We study a class of linear first and second order partial differential
equations driven by weak geometric $p$-rough paths, and prove the existence
of a unique solution for these equations. This solution depends continuously
on the driving rough path. This allows a robust approach to stochastic
partial differential equations.\ In particular, we may replace Brownian
motion by more general Gaussian and Markovian noise. Support theorems and
large deviation statements all became easy corollaries of the corresponding
statements of the driving process. In the case of first order equations with
Gaussian noise, we discuss the existence of a density with respect to the
Lebesgue measure for the solution.
\end{abstract}

\section{Introduction\label{Introduction}}

The theory of rough paths can be described as an extension of the classical
theory of controlled differential equations which is sufficiently robust to
allow a deterministic treatment of stochastic differential equations, and
equations driven by signals which are even more irregular than
semi-martingales. \ Recently various attempts have been made to extend this
theory to partial differential equations (PDEs), with the aim of obtaining
some form of deterministic treatment for stochastic partial differential
equations (SPDEs) and at the same time allowing more general driving signals.

In \cite{gubin-lejay-tindel-2006}, a non-linear evolution problem driven by
a H\"{o}lder continuous path with values in a distribution space is studied.
\ Young integration is used to obtain a mild solution for this equation. \ A
non-linear one-dimensional wave equation driven by signals which satisfy
appropriate H\"{o}lder regularity conditions is considered in \cite%
{quer-tindel-2007}. \ The authors use a 2 dimensional Young integration
theory to solve the wave equation in a mild sense. \ In both these papers, H%
\"{o}lder exponents are assumed to be greater than $\frac{1}{2}$ and
applications to equations driven by Fractional Brownian Motion with Hurst
index greater than $\frac{1}{2}$ are given.

The goal of the present paper is to deal with partial differential equations
of parabolic type of form (with summation over repeated indices) 
\begin{equation}
\frac{\partial u}{\partial t}\left( t,y\right) =\frac{1}{2}a^{ij}\left(
t,y\right) \frac{\partial ^{2}u}{\partial y^{i}\partial y^{j}}+b^{i}\left(
t,y\right) \frac{\partial u}{\partial y^{i}}\left( t,y\right) dt-\frac{%
\partial u}{\partial y^{k}}\left( t,y\right) V_{l}^{k}\left( y\right) \frac{%
dx_{t}^{l}}{dt}
\end{equation}%
with given inital data $u\left( 0,\cdot \right) $, subjected to a
(finite-dimensional) driving signal $\left( x_{t}\right) =\left(
x_{t}^{1},\dots ,x_{t}^{d}\right) $ where $\left( x_{t}\right) $ may only
posses the "rough" regularity of a typical sample path of a stochastic
process; $V_{l}^{k}\left( \cdot \right) $ are sufficiently regular
coefficients. By combining ideas from rough path theory, in particular the
construction of flows associated to \textit{rough differential equations }%
(RDEs) and classical PDE theory we are able to show existence, uniqueness
and a limit theorem for such \textit{rough partial differential equations}
(RPDEs) when the driving signal is a genuine (to be precise: weak,
geometric) $p$-rough path. The main example of such a rough path is given by
(almost every realization of) \textit{Brownian motion and L\'{e}vy's area}
and this allows for a robust treatment of the corresponding classes of
SPDEs. The use of rough path theory in the context of SPDEs has been
conjectued by various people (and in particular by Lyons himself in the
introduction of his '98 article \cite{lyons-98}).\ The present results,
together with those in the just appeared preprint \cite%
{gubinelli-tindel-2008}, seem to be the first steps in this direction. 

This paper is organized as follows. In Section \ref{Preliminaries} we
discuss various concepts we will need from rough path theory, while in
Section \ref{Section RPDEs} we present our results on PDEs driven by weak
geometric rough paths. \ Sections \ref{Section Brownian RPDES}, \ref{Section
Markov RPDEs} and \ref{Section Gaussian RPDEs} are devoted to SPDEs with
multi-dimensional Brownian, Markovian and Gaussian signals\ (Fractional
Brownian Motion, for instance, is covered for $H>\frac{1}{4}$) respectively.
Using the continuity of our solution map, together with results on the
support of the law and large deviation statements for Markovian and Gaussian
rough paths, we get a description of the support of the law of the solution,
and a generalization of the Freidlin Wentzell theorem for these SPDEs. \ In
the case of first order\ equations driven by a class of non-degenerate
Gaussian signals, we also obtain the existence of a density for the
solutions.

\bigskip 

\section{Preliminaries\label{Preliminaries}}

In this section we are going to recall those notions and results from rough
path theory, that will be used in the rest of this paper. \ For a more
complete exposition of this theory, we refer the reader to \cite%
{lyons-qian-2002}, \cite{lyonscarle-2007}, and \cite{FVbook}.\newline
By a smart limiting procedure, ordinary differential equations (ODEs) of
type,%
\begin{equation*}
dy_{t}=\sum_{i}V_{i}\left( y_{t}\right) dx_{t}^{i}\equiv V\left(
y_{t}\right) dx_{t}
\end{equation*}%
defined on the time interval $\left[ 0,T\right] $, started at $y_{0}\in 
\mathbb{R}^{e}$ at time $0$, with Lipschitz vector fields $V=\left(
V_{1},...,V_{d}\right) $ on $\mathbb{R}^{e}$ give rise to so-called rough
differential equations, denoted formally by, 
\begin{equation}
dy_{t}=V\left( y_{t}\right) d\mathbf{x}_{t}  \label{RDE}
\end{equation}%
where $\mathbf{x}$ is  weak geometric $p$-rough path\footnote{%
Strictly speaking we should speak of weak geometric H\"{o}lder $p$-rough
paths.}, that is a $\frac{1}{p}$-H\"{o}lder continuous path from $\left[ 0,T%
\right] $ to $G^{\left[ p\right] }\left( \mathbb{R}^{d}\right) $ (the step-$%
\left[ p\right] $ nilpotent free group over $\mathbb{R}^{d}$), i.e.%
\begin{equation*}
\left\Vert \mathbf{x}_{s,t}\right\Vert \lesssim \left\vert t-s\right\vert ^{%
\frac{1}{p}}\text{ \ for all }s,t\in \left[ 0,T\right] \text{,}
\end{equation*}%
where $\left\Vert \cdot \right\Vert $ is a homogenous norm on $G^{\left[ p%
\right] }\left( \mathbb{R}^{d}\right) $. \ The space of weak geometric H\"{o}%
lder $p$-rough paths is denoted by $C^{\frac{1}{p}-H\ddot{o}l}\left( \left[
0,T\right] ,G^{\left[ p\right] }\left( \mathbb{R}^{d}\right) \right) $, and
for $\mathbf{x}\in C^{\frac{1}{p}-H\ddot{o}l}\left( \left[ 0,T\right] ,G^{%
\left[ p\right] }\left( \mathbb{R}^{d}\right) \right) $, we define,%
\begin{equation*}
\left\Vert \mathbf{x}\right\Vert _{\frac{1}{p}-H\ddot{o}l;\left[ 0,T\right]
}=\sup_{0\leq s<t\leq T}\frac{\left\Vert \mathbf{x}_{s,t}\right\Vert }{%
\left\vert t-s\right\vert ^{\frac{1}{p}}}\text{.}
\end{equation*}%
We also set,%
\begin{equation*}
d_{\frac{1}{p}-H\ddot{o}l;\left[ 0,T\right] }\left( \mathbf{x,\tilde{x}}%
\right) =\sup_{0\leq s<t\leq T}\frac{\left\Vert \mathbf{x}_{s,t}^{-1}\otimes 
\mathbf{\tilde{x}}_{s,t}\right\Vert }{\left\vert t-s\right\vert ^{\frac{1}{p}%
}}\text{.}
\end{equation*}

In the next definition we explain the notion of an RDE solution for (\ref%
{RDE}).

\begin{definition}
\label{solution of rde}Let $\mathbf{x}$ be a weak geometric $p$-rough path,
and suppose that $\left( x^{n}\right) _{n\in \mathbb{N}}$ is a sequence of
Lipschitz paths such that%
\begin{equation*}
S_{\left[ p\right] }\left( x^{n}\right) \equiv \mathbf{x}^{n}\longrightarrow 
\mathbf{x}
\end{equation*}%
uniformly on $\left[ 0,T\right] $ and $\sup_{n}\left\Vert \mathbf{x}%
^{n}\right\Vert _{\frac{1}{p}-H\ddot{o}l;\ \left[ 0,T\right] }<\infty $. \
We call any limit point (in uniform topology on $\left[ 0,T\right] $) of 
\begin{equation*}
\left\{ \pi _{\left( V\right) }\left( 0,y_{0};x^{n}\right) :n\geq 1\right\} 
\end{equation*}%
an RDE solution for (\ref{RDE}) and we denote it by $\pi _{\left( V\right)
}\left( 0,y_{0};\mathbf{x}\right) $. Here,\ $\pi _{\left( V\right) }\left(
0,y_{0};x^{n}\right) $ denotes the solution of the controlled differential
equation, 
\begin{equation*}
dy_{t}^{n}=V\left( y_{t}^{n}\right) dx_{t}^{n}
\end{equation*}%
started at $y_{0}\in \mathbb{R}^{e}$ at time $0$, and $S_{\left[ p\right]
}\left( x^{n}\right) $ is the step-$\left[ p\right] $ signature of $x^{n}$.
\end{definition}

The existence of a sequence of Lipschitz paths $\left( x^{n}\right) _{n\in 
\mathbb{N}}$ with the above properties was established in \cite%
{friz-victoir-2006}. \ In our definition, RDE solutions are genuine $\mathbb{%
R}^{e}$-valued paths. \ It is possible to define RDE solutions as proper
rough paths, but this is of no significance in the present work.

The Universal Limit theorem is one of the main results in rough path theory.
\ It gives a sufficient condition on the vector fields for the existence of
a unique RDE solution, and furthermore, it states that the It\^{o} map which
sends the driving signal to the solution, is continuous.

\begin{theorem}
\label{Thm ULT}Let $\mathbf{x}$ be a weak geometric $p$-rough path and
assume that the vector fields $V=\left( V_{1},...,V_{d}\right) $ are $%
\mathrm{Lip}^{\gamma }\left( \mathbb{R}^{e}\right) $ for $\gamma >p$. \ Then
the RDE%
\begin{equation*}
dy_{t}=V\left( y_{t}\right) d\mathbf{x}_{t}
\end{equation*}%
started at $y_{0}\in \mathbb{R}^{e}$ at time $0$, has a unique RDE solution,
denoted by $\pi _{\left( V\right) }\left( 0,y_{0};\mathbf{x}\right) $. \
Furthermore if $\left( \mathbf{x}_{n}\right) _{n\in \mathbb{N}}\subset C^{%
\frac{1}{p}-H\ddot{o}l}\left( \left[ 0,T\right] ,G^{\left[ p\right] }\left( 
\mathbb{R}^{d}\right) \right) $ converges uniformly to $\mathbf{x}$ with
respect to $d_{\frac{1}{p}-H\ddot{o}l;\left[ 0,T\right] }$ then%
\begin{equation*}
\pi _{\left( V\right) }\left( 0,y_{0};\mathbf{x}^{n}\right) \longrightarrow
\pi _{\left( V\right) }\left( 0,y_{0};\mathbf{x}\right) 
\end{equation*}%
uniformly (in fact, in $1/p$-H\"{o}lder norm).
\end{theorem}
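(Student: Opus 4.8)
The plan is to split Theorem~\ref{Thm ULT} into two parts: (i) existence of an RDE solution, via a priori estimates plus a compactness argument, and (ii) uniqueness together with continuous dependence, via a Gronwall-type comparison estimate. The engine for both is a \emph{local} estimate of Davie type: on any subinterval $[s,t]\subset[0,T]$ an RDE solution agrees with its step-$[p]$ Euler/Taylor expansion up to an error of order $\omega(s,t)^{\gamma/p}$, where $\omega$ is a control dominating $(u,v)\mapsto\|\mathbf{x}_{u,v}\|^{p}$ on $[0,T]$ and the implicit constant depends only on $\|V\|_{\mathrm{Lip}^{\gamma}}$. Since $\gamma>p$ we have $\gamma/p>1$, so these remainders are \emph{higher order}; this is the one place the hypothesis $\gamma>p$ is used, and it is exactly what makes every summation below converge.

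For existence, observe that each ODE $dy^{n}=V(y^{n})\,dx^{n}$ is a special case of~(\ref{RDE}) driven by the smooth rough path $\mathbf{x}^{n}=S_{[p]}(x^{n})$, so the local estimate applies to $y^{n}$ with control $\omega_{n}(s,t)=\|\mathbf{x}^{n}\|_{\frac1p\text{-H\"ol};[0,T]}^{p}\,|t-s|$. A bootstrap first upgrades it to $\|S_{[p]}(y^{n})_{s,t}\|\le C\,\omega_{n}(s,t)^{1/p}$ whenever $\omega_{n}(s,t)\le\delta_{0}$, for a threshold $\delta_{0}=\delta_{0}(\|V\|_{\mathrm{Lip}^{\gamma}})$; then a greedy subdivision of $[0,T]$ into $N\asymp\omega_{n}(0,T)/\delta_{0}$ intervals on each of which $\omega_{n}=\delta_{0}$, together with super-additivity of $\omega_{n}$, yields
\begin{equation*}
\|S_{[p]}(y^{n})\|_{\frac1p\text{-H\"ol};[0,T]}\le C\big(\|V\|_{\mathrm{Lip}^{\gamma}},\ \sup_{n}\|\mathbf{x}^{n}\|_{\frac1p\text{-H\"ol};[0,T]},\ T\big)
\end{equation*}
uniformly in $n$. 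A uniform $\frac1p$-H\"older bound gives boundedness and equicontinuity of $\{y^{n}\}$, so by Arzel\`a--Ascoli the sequence is relatively compact in the uniform topology on $[0,T]$; any limit point exists, and by Definition~\ref{solution of rde} it is an RDE solution.

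For uniqueness and continuity I would compare two equations simultaneously. Fix $\mathbf{x},\tilde{\mathbf{x}}$ with $\|\mathbf{x}\|_{\frac1p\text{-H\"ol}},\|\tilde{\mathbf{x}}\|_{\frac1p\text{-H\"ol}}\le K$ and choose Lipschitz approximations $x^{n}\to\mathbf{x}$, $\tilde{x}^{n}\to\tilde{\mathbf{x}}$ with $d_{\frac1p\text{-H\"ol}}(\mathbf{x}^{n},\tilde{\mathbf{x}}^{n})\to d_{\frac1p\text{-H\"ol}}(\mathbf{x},\tilde{\mathbf{x}})$. Writing $\Delta^{n}_{t}=|y^{n}_{t}-\tilde{y}^{n}_{t}|$, the local estimate applied to the difference of the two Euler expansions gives, on any interval with $\omega(s,t)\le\delta_{0}$ ($\omega$ now dominating both controls),
\begin{equation*}
\Delta^{n}_{t}\le\Delta^{n}_{s}\big(1+C\,\omega(s,t)^{1/p}\big)+C\,d_{\frac1p\text{-H\"ol}}(\mathbf{x}^{n},\tilde{\mathbf{x}}^{n})\,\omega(s,t)^{1/p}.
\end{equation*}
Iterating over the $N\asymp\omega(0,T)/\delta_{0}$ greedy subintervals (with $\Delta^{n}_{0}=0$) and summing the geometric factors gives $\|y^{n}-\tilde{y}^{n}\|_{\infty;[0,T]}\le C(K,\|V\|_{\mathrm{Lip}^{\gamma}},T)\,d_{\frac1p\text{-H\"ol}}(\mathbf{x}^{n},\tilde{\mathbf{x}}^{n})$, and a refinement of the same computation upgrades $\|\cdot\|_{\infty}$ to $\|\cdot\|_{\frac1p\text{-H\"ol}}$. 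Letting $n\to\infty$ promotes this to a bound on the $\frac1p$-H\"older distance between the RDE solutions of the two equations in terms of $d_{\frac1p\text{-H\"ol}}(\mathbf{x},\tilde{\mathbf{x}})$; with $\tilde{\mathbf{x}}=\mathbf{x}$ it forces all limit points in Definition~\ref{solution of rde} to coincide (uniqueness), and applied along a convergent sequence it gives the asserted continuity of $\mathbf{x}\mapsto\pi_{(V)}(0,y_{0};\mathbf{x})$, in $\frac1p$-H\"older norm.

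The main obstacle is the local estimate and the bookkeeping in its patching. Proving the local estimate requires positing the bound $\|S_{[p]}(y)_{s,t}\|\le C\,\omega(s,t)^{1/p}$, inserting the step-$[p]$ Taylor expansion of $V$ along the solution, and showing the remainder is $O(\omega(s,t)^{\gamma/p})$ with $\gamma/p>1$; here one must Taylor-expand the possibly non-integer order $\mathrm{Lip}^{\gamma}$ vector fields, control the signature remainders of $\mathbf{x}$ and of $y$, and check super-additivity of the auxiliary controls. The remainder being higher order is precisely what lets the bootstrap close below $\delta_{0}$. In the patching one must bound the product of the $N\asymp\omega(0,T)/\delta_{0}$ local factors $1+C\,\delta_{0}^{1/p}$ by $\exp(C\,\omega(0,T))$ (absorbing the fixed $\delta_{0}$ into $C$), so all constants pick up an exponential dependence on $\omega(0,T)\asymp\|\mathbf{x}\|_{\frac1p\text{-H\"ol}}^{p}\,T$; carrying this through cleanly, especially in the comparison argument, is the delicate step. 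Full details of this programme are in \cite{FVbook} and \cite{lyons-qian-2002}.
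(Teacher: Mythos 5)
The paper itself gives no proof of Theorem \ref{Thm ULT}: it is quoted as a known result with the proof delegated to \cite{lyons-qian-2002}, \cite{lyonscarle-2007} and \cite{friz-Victoir-2008}. Your outline is, in substance, the Davie/Friz--Victoir argument contained in those references: a local Euler estimate with remainder $O\bigl(\omega(s,t)^{\gamma/p}\bigr)$, $\gamma/p>1$, a bootstrap below a threshold $\delta_0(\left\vert V\right\vert_{\mathrm{Lip}^{\gamma}})$ plus greedy partitioning for uniform a priori H\"older bounds, Arzel\`a--Ascoli for existence of limit points, and a two-driver comparison estimate giving Lipschitz dependence of the It\^o--Lyons map on bounded sets, hence uniqueness of the limit point and the stated continuity. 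So this is the same route as the cited proof, presented at outline level with the hard local estimate deferred.

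One step as written needs repair. In the uniqueness/continuity part you ``choose Lipschitz approximations with $d_{\frac1p\text{-H\"ol}}(\mathbf{x}^{n},\tilde{\mathbf{x}}^{n})\to d_{\frac1p\text{-H\"ol}}(\mathbf{x},\tilde{\mathbf{x}})$''. For a genuine \emph{weak} geometric $\frac1p$-H\"older rough path, lifts of Lipschitz paths are in general not dense in $d_{\frac1p\text{-H\"ol}}$ (this is precisely the weak versus geometric distinction), and Definition \ref{solution of rde} only supplies uniform convergence together with uniform H\"older bounds, which does not imply convergence in $d_{\frac1p\text{-H\"ol}}$. The standard fix is interpolation: uniform convergence with uniform $\frac1p$-H\"older bounds does give convergence in $d_{\frac1{p'}\text{-H\"ol}}$ for every $p'\in(p,\gamma)$, and since the vector fields are $\mathrm{Lip}^{\gamma}$ with $\gamma>p'$ the entire local estimate, bootstrap and comparison argument can be run at level $p'$; this is how \cite{friz-Victoir-2008} and \cite{FVbook} close the argument, and it is the same interpolation device the present paper invokes in the proof of Theorem \ref{Existence Theorem}. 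With that adjustment (and, for the final continuity claim, noting that the hypothesis $d_{\frac1p\text{-H\"ol}}(\mathbf{x}^{n},\mathbf{x})\to0$ is then more than enough) your programme is the correct one; what remains is the proof of the local Davie estimate itself, which you rightly identify as the technical core and defer to the references.
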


\begin{proof}
c.f. \cite{lyons-qian-2002}, \cite{lyonscarle-2007} and \cite%
{friz-Victoir-2008}.
\end{proof}

One of the elementary operations on rough paths described in \cite%
{lyons-qian-2002}, is time reversal. \ Given $\mathbf{x}\in C^{\frac{1}{p}-H%
\ddot{o}l}\left( \left[ 0,T\right] ,G^{\left[ p\right] }\left( \mathbb{R}%
^{d}\right) \right) $, and a fixed $t\in \left( 0,T\right] $, we can define
a new weak geometric $p$-rough path $\overleftarrow{\mathbf{x}}^{t}$ by 
\begin{equation*}
\overleftarrow{\mathbf{x}}^{t}:\left[ 0,t\right] \longrightarrow G^{\left[ p%
\right] }\left( \mathbb{R}^{d}\right)
\end{equation*}%
\begin{equation*}
\ \ \ \ \ \ \ \ s\ \ \longmapsto \ \ \overleftarrow{\mathbf{x}}_{s}^{t}=%
\mathbf{x}_{t-s}\text{.}
\end{equation*}%
From \cite{lyons-qian-2002}, we know that the map which sends $\mathbf{x}$
to $\overleftarrow{\mathbf{x}}^{t}$, is continuous in $\frac{1}{p}$-H\"{o}%
lder topology.

Our interest in these time reversed paths comes form the following important
fact. \ If we again denote the RDE solution for (\ref{RDE}) by $\pi \left(
0,y;\mathbf{x}\right) $, we have, 
\begin{eqnarray*}
\pi _{\left( V\right) }\left( 0,\pi _{\left( V\right) }\left( 0,y_{0};%
\overleftarrow{\mathbf{x}}^{t}\right) _{t};\mathbf{x}\right) _{t} &=&\pi
_{\left( V\right) }\left( 0,\pi _{\left( V\right) }\left( 0,y_{0};\mathbf{x}%
\right) _{t};\overleftarrow{\mathbf{x}}^{t}\right) _{t} \\
&=&y_{0}\text{.}
\end{eqnarray*}%
Thus,%
\begin{equation*}
\pi _{\left( V\right) }\left( 0,\cdot ;\mathbf{x}\right) _{t}^{-1}\left(
y\right) =\pi _{\left( V\right) }\left( 0,y;\overleftarrow{\mathbf{x}}%
^{t}\right) _{t}
\end{equation*}%
i.e. for each fixed $t$, the inverse of the map $y\longmapsto $ $\pi
_{\left( V\right) }\left( 0,y;\mathbf{x}\right) _{t}$ can be obtained by
solving a rough differential equation driven by the time reversal of the
original driving signal.

The inverse map $\pi _{\left( V\right) }\left( 0,\cdot ;\mathbf{x}\right)
_{t}^{-1}$, and thus $\pi _{\left( V\right) }\left( 0,y;\overleftarrow{%
\mathbf{x}}^{t}\right) _{t}$, the RDE solution for 
\begin{equation*}
dy_{s}=V\left( y_{s}\right) d\overleftarrow{\mathbf{x}}_{s}^{t}
\end{equation*}%
started at $y\in \mathbb{R}^{e}$ at time $0$, will play a very important
role in our definition of a solution for PDEs driven by weak geometric rough
paths.

\section{Rough partial differential equations\label{Section RPDEs}}

Consider partial differential equations of the form%
\begin{eqnarray}
du\left( t,y\right) &=&L_{t}u\left( t,y\right) dt-\sum_{i,j}\partial
_{j}u\left( t,y\right) \cdot V_{i}^{j}\left( y\right) dx_{t}^{i}
\label{2nd order PDE} \\
u\left( 0,y\right) &=&\phi \left( y\right)  \notag
\end{eqnarray}%
on the time interval $\left[ 0,T\right] $, with driving signal $x:\left[ 0,T%
\right] \longrightarrow \mathbb{R}^{d}$, $d$ vector fields $V_{1},\ldots
,V_{d}$ on $\mathbb{R}^{e}$, initial function $\phi :\mathbb{R}%
^{e}\longrightarrow \mathbb{R}$ and $L_{t}$ an elliptic operator of the
form, 
\begin{equation*}
L_{t}=\frac{1}{2}a^{ij}\left( t,\cdot \right) \frac{\partial ^{2}}{\partial
y^{i}\partial y^{j}}+b^{i}\left( t,\cdot \right) \frac{\partial }{\partial
y^{i}}
\end{equation*}%
with\footnote{$S_{e}$ is the set of symmetric non-negative definite $e\times
e$ real matrices.} $a:\left[ 0,T\right] \times \mathbb{R}^{e}\longrightarrow
S_{e}$ and $b:\left[ 0,T\right] \times \mathbb{R}^{e}\longrightarrow \mathbb{%
R}^{e}$. \ In this section we are going to define a notion of a rough
solution for the above PDE when the driving noise is a weak geometric $p$%
-rough path, and then discuss the existence and uniqueness of these
solutions.

Our first task is to define precisely what we mean by a solution for a rough
linear PDE. \ With the definition of an RDE solution (Definition \ref%
{solution of rde})\ in mind, we give the following definition.

\begin{definition}
\label{solution of RPDE}Let $\mathbf{x}$ be a weak geometric $p$-rough path,
and suppose that $\left( x^{n}\right) _{n\in \mathbb{N}}$ is a sequence of
Lipschitz paths such that%
\begin{equation*}
S_{\left[ p\right] }\left( x^{n}\right) \equiv \mathbf{x}^{n}\longrightarrow 
\mathbf{x}
\end{equation*}%
uniformly on $\left[ 0,T\right] $ and $\sup_{n}\left\Vert \mathbf{x}%
^{n}\right\Vert _{\frac{1}{p}-H\ddot{o}l;\ \left[ 0,T\right] }<\infty $. \
Assume that for each $n\in \mathbb{N}$, 
\begin{eqnarray*}
du_{n}\left( t,y\right) &=&L_{t}u_{n}\left( t,y\right) dt-\nabla u_{n}\left(
t,y\right) \cdot V\left( y\right) dx_{t}^{n} \\
u_{n}\left( 0,\cdot \right) &=&\phi \left( \cdot \right) \in C_{b}\left( 
\mathbb{R}^{e}\right)
\end{eqnarray*}%
has a unique $C_{b}^{1,2}$ solution $u_{n}$. \ Then any limit point (in the
uniform topology), of 
\begin{equation*}
\left\{ u_{n}\left( t,y\right) :n\geq 1\right\}
\end{equation*}%
is called a solution for the rough partial differential equation, denoted
formally by,%
\begin{eqnarray}
du\left( t,y\right) &=&L_{t}u\left( t,y\right) dt-\nabla u\left( t,y\right)
\cdot V\left( y\right) d\mathbf{x}_{t}  \label{2nd order RPDE} \\
u\left( 0,y\right) &=&\phi \left( y\right) \text{.}  \notag
\end{eqnarray}
\end{definition}

The rest of this section will be devoted to proving the existence and
uniqueness of solutions for (\ref{2nd order RPDE}). \ The continuity of the
map which sends the driving signal to the solution will also be proved. \ We
will first look at the case $L_{t}\equiv 0$ i.e. we solve a transport
equation driven by a weak geometric $p$-rough path. \ The second order
equation ($L_{t}\neq 0$), which is treated next, can then be seen as a
perturbation of the first order equation.

\subsection{Linear first order RPDEs ($L_{t}\equiv 0$)\label{Subsec 1st
order RPDEs}}

As a motivation for our approach, let us first recall how linear first order
equations are treated in the classical and stochastic cases. \ Consider the
PDE given in (\ref{2nd order PDE}) with $L_{t}\equiv 0$. \ When the path $x:%
\left[ 0,T\right] \longrightarrow \mathbb{R}^{d}$ and the vector fields $%
V_{i}$ are Lipschitz continuous, with an initial function $\phi \in
C^{1}\left( \mathbb{R}^{e},\mathbb{R}\right) $, we can use the method of
characteristics to obtain a unique solution for this equation. \ Indeed, let 
$\pi _{\left( V\right) }\left( 0,y;x\right) $ be the unique solution of the
controlled differential equation, 
\begin{equation*}
dy_{t}=V\left( y_{t}\right) dx_{t}
\end{equation*}%
started at $y\in \mathbb{R}^{e}$ at time $0$. \ Then one can easily show
that for any solution $u$ of 
\begin{eqnarray}
du\left( t,y\right) +V_{i}^{j}\left( y\right) \frac{\partial u\left(
t,y\right) }{\partial y^{j}}dx_{t}^{i} &=&0  \label{PDE} \\
u\left( 0,y\right)  &=&\phi \left( y\right)   \notag
\end{eqnarray}%
we have 
\begin{equation*}
u\left( t,\pi _{\left( V\right) }\left( 0,y;x\right) _{t}\right) =\phi
\left( y\right) \text{.}
\end{equation*}%
Thus we deduce that, 
\begin{eqnarray*}
u\left( t,y\right)  &=&\phi \left( \pi _{\left( V\right) }\left( 0,\cdot
;x\right) _{t}^{-1}\left( y\right) \right)  \\
\ \ \ \ \  &=&\phi \left( \pi _{\left( V\right) }\left( 0,y;\overleftarrow{x}%
^{t}\right) _{t}\right) 
\end{eqnarray*}%
is the unique solution of (\ref{PDE}) with a Lipschitz continuous driving
signal, where $\overleftarrow{x}_{s}^{t}=x_{t-s}$ for $s\in \left[ 0,t\right]
$.

H. Kunita studied first order SPDEs in \cite{Kun-90} using a stochastic
characteristics system, which can be thought of being a generalization of
the method of characteristics to the stochastic case. \ For a first order
linear SPDE driven by a Brownian motion $\left( B_{t}\right) _{t\geq 0}$ in $%
\mathbb{R}^{d}$, 
\begin{eqnarray}
du\left( t,y\right) +V_{i}^{j}\left( y\right) \frac{\partial u\left(
t,y\right) }{\partial y^{j}}\circ dB_{t}^{i} &=&0  \label{SPDE} \\
u\left( 0,y\right)  &=&\phi \left( y\right)   \notag
\end{eqnarray}%
the stochastic characteristic is given by the following Stratonovich SDE, 
\begin{equation}
dy_{t}=V\left( y_{t}\right) \circ dB_{t}~\ \ \ \ \ \ y_{0}=y\in \mathbb{R}%
^{e}\text{.}  \label{characteristic SDE}
\end{equation}%
If the vector fields $V_{i}$ and the initial function $\phi $ are $%
C^{3+\varepsilon }$, then one can use the theory of stochastic flows to
prove that the unique solution of (\ref{SPDE}) is given by, 
\begin{equation*}
u\left( t,y\right) =\phi \left( \pi _{\left( V\right) }\left( 0,\cdot
;B\right) _{t}^{-1}\left( y\right) \right) 
\end{equation*}%
where $\pi _{\left( V\right) }\left( 0,\cdot ;B\right) _{t}$ is the unique
stochastic flow associated with (\ref{characteristic SDE}).

From these brief remarks, we see that the problem of solving first order
linear PDEs with Lipschitz continuous and Brownian signals, can be reduced
to solving an ordinary and stochastic differential equation respectively. \
Therefore a natural question to ask is whether one can use an RDE to solve a
first order linear PDE driven by a weak geometric $p$-rough path.

In the following theorem we give sufficient conditions on the vector fields
and the initial function which guarantee the existence of a unique solution
for a linear first order\ rough PDE driven by a weak geometric $p$-rough
path. \ Moreover, we prove that the map which sends the driving signal to
the solution, is continuous in the uniform topology.

\begin{theorem}
\label{Existence Theorem}Let $p\geq 1$ and let $\mathbf{x}$ be a weak
geometric $p$-rough path. \ Assume that,

\begin{enumerate}
\item $V=\left( V_{1},\ldots ,V_{d}\right) $ is a collection of $\mathrm{Lip}%
^{\gamma }$ vector fields on $\mathbb{R}^{e}$ for $\gamma >p$;

\item $\phi \in C_{b}^{1}\left( \mathbb{R}^{e};\mathbb{R}\right) $.
\end{enumerate}

Then the RPDE, 
\begin{eqnarray}
du\left( t,y\right) +\nabla u\left( t,y\right) \cdot V\left( y\right) d%
\mathbf{x}_{t} &=&0  \label{1st order PDE weak geometric rough path} \\
u\left( 0,y\right)  &=&\phi \left( y\right)   \notag
\end{eqnarray}%
has a unique solution $u$, given explicitly by,%
\begin{equation*}
u\left( t,y\right) =\phi \left( \pi _{\left( V\right) }\left( 0,y;%
\overleftarrow{\mathbf{x}}^{t}\right) _{t}\right) 
\end{equation*}%
where $\pi _{\left( V\right) }\left( 0,y;\mathbf{x}\right) $ was introduced
in Theorem \ref{Thm ULT}. \ We denote the solution $u$\ by $\Pi _{\left(
V\right) }\left( 0,\phi ;\mathbf{x}\right) $. \ Furthermore, the map 
\begin{equation*}
\mathbf{x\longmapsto }u=\Pi _{\left( V\right) }\left( 0,\phi ;\mathbf{x}%
\right) 
\end{equation*}%
is continuous from $C^{\frac{1}{p}-H\ddot{o}l}\left( \left[ 0,T\right] ,G^{%
\left[ p\right] }\left( \mathbb{R}^{d}\right) \right) $ into $C\left( \left[
0,T\right] \times \mathbb{R}^{e}\right) $ when the latter is\ equipped with
the uniform topology.
\end{theorem}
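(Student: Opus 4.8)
The plan is to verify that the explicit formula $u(t,y)=\phi\big(\pi_{(V)}(0,y;\overleftarrow{\mathbf{x}}^{t})_{t}\big)$ is the \emph{unique} limit point of the classical solutions of the approximating equations, and then to read off both assertions (existence/uniqueness and continuity of the solution map) from the Universal Limit Theorem together with the continuity of the time‑reversal operation. For \textbf{Step 1} (the approximating solutions), fix a sequence $(x^{n})$ of Lipschitz paths with $S_{[p]}(x^{n})\equiv\mathbf{x}^{n}\to\mathbf{x}$ uniformly and $\sup_{n}\|\mathbf{x}^{n}\|_{\frac{1}{p}\text{-H\"ol};[0,T]}<\infty$, as provided by \cite{friz-victoir-2006}; we may take the $x^{n}$ smooth, since smoothing them further affects neither the uniform convergence of the signatures nor the uniform H\"older bound. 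For each $n$ the equation $du_{n}+\nabla u_{n}\cdot V\,dx^{n}_{t}=0$, $u_{n}(0,\cdot)=\phi$, is a classical linear first‑order PDE, and by the method of characteristics recalled above its unique solution is $u_{n}(t,y)=\phi\big(\pi_{(V)}(0,\cdot;x^{n})_{t}^{-1}(y)\big)=\phi\big(\pi_{(V)}(0,y;\overleftarrow{x^{n}}^{t})_{t}\big)$; since $x^{n}$ is smooth and $V\in\mathrm{Lip}^{\gamma}$, the flow $y\mapsto\pi_{(V)}(0,y;x^{n})_{t}$ is a diffeomorphism of enough regularity that $u_{n}\in C_{b}^{1,2}$ (if one wants the case $\gamma\le 2$ covered too, one also mollifies $V$ and $\phi$ at scale $1/n$ and removes the mollification in Step 2 via the quantitative estimates there). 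Thus Definition~\ref{solution of RPDE} applies with this sequence.

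For \textbf{Step 2} (convergence to the explicit formula), write $\overleftarrow{x^{n}}^{t}$ for the lifted time reversal on $[0,t]$. By the continuity of the reversal map in $\frac{1}{p}$‑H\"older topology (\cite{lyons-qian-2002}, recalled above) one has $\overleftarrow{x^{n}}^{t}\to\overleftarrow{\mathbf{x}}^{t}$ for each fixed $t$, with H\"older norms bounded uniformly in $n$ \emph{and} in $t$; Theorem~\ref{Thm ULT} then gives $\pi_{(V)}(0,y;\overleftarrow{x^{n}}^{t})_{t}\to\pi_{(V)}(0,y;\overleftarrow{\mathbf{x}}^{t})_{t}$, and continuity of $\phi$ yields $u_{n}(t,y)\to u(t,y):=\phi\big(\pi_{(V)}(0,y;\overleftarrow{\mathbf{x}}^{t})_{t}\big)$ pointwise. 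To upgrade this to uniform convergence on $[0,T]\times\mathbb{R}^{e}$ I would invoke the quantitative form of the ULT: because the $V_{i}$ are $\mathrm{Lip}^{\gamma}$ (in particular bounded with bounded derivatives), the modulus of continuity of the It\^o--Lyons map $\mathbf{z}\mapsto\pi_{(V)}(0,y;\mathbf{z})_{t}$ is controlled by $\|\mathbf{z}\|_{\frac{1}{p}\text{-H\"ol}}$ alone, \emph{uniformly} in the starting point $y\in\mathbb{R}^{e}$ and in $t$; together with $\|D\phi\|_{\infty}<\infty$ this gives $\|u_{n}-u\|_{\infty;[0,T]\times\mathbb{R}^{e}}\to 0$. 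Hence $u$ is the only limit point of $\{u_{n}\}$; and since any admissible approximating sequence satisfies the same hypotheses, the identical argument produces the same $u$, so $u$ is the unique solution in the sense of Definition~\ref{solution of RPDE}.

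For \textbf{Step 3} (joint continuity and continuity of the solution map), continuity of $u$ in $y$ is immediate from continuity of $\phi$ and of $y\mapsto\pi_{(V)}(0,y;\overleftarrow{\mathbf{x}}^{t})_{t}$, while continuity in $t$ follows by rescaling $[0,t]$ to a fixed interval, observing that $t\mapsto\overleftarrow{\mathbf{x}}^{t}$ is then continuous into $C^{\frac{1}{p}\text{-H\"ol}}$ and applying the ULT once more; so $u\in C([0,T]\times\mathbb{R}^{e})$. For the final assertion, let $\mathbf{x}^{m}\to\mathbf{x}$ with respect to $d_{\frac{1}{p}\text{-H\"ol};[0,T]}$; the reversal map (again after rescaling) is continuous uniformly in $t$, so $\sup_{t\in[0,T]}d_{\frac{1}{p}\text{-H\"ol};[0,t]}\big(\overleftarrow{(\mathbf{x}^{m})}^{t},\overleftarrow{\mathbf{x}}^{t}\big)\to 0$, and then the very same uniform estimate from Step 2 (quantitative ULT plus $\|D\phi\|_{\infty}$) gives $\big\|\Pi_{(V)}(0,\phi;\mathbf{x}^{m})-\Pi_{(V)}(0,\phi;\mathbf{x})\big\|_{\infty;[0,T]\times\mathbb{R}^{e}}\to 0$, which is precisely continuity into $C([0,T]\times\mathbb{R}^{e})$ with the uniform topology.

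The step I expect to be the main obstacle is the \emph{uniformity} in $(t,y)$: one must know the continuity of the It\^o--Lyons map is quantitative and uniform over all starting points $y\in\mathbb{R}^{e}$ — this is exactly what forces the $\mathrm{Lip}^{\gamma}$ (hence bounded) hypothesis on $V$ rather than mere $\mathrm{Lip}^{\gamma}_{\mathrm{loc}}$ — and one must control the $t$‑dependence of $\overleftarrow{\mathbf{x}}^{t}$, whose domain and terminal point both move with $t$, by a rescaling that keeps the H\"older bounds uniform in $t$. Verifying the $C_{b}^{1,2}$ regularity demanded of the approximants in Definition~\ref{solution of RPDE} is a secondary technical point, handled by smoothing the data as in Step 1.
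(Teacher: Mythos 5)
Your overall architecture is the paper's: approximate by Lipschitz paths, solve the transport equations by characteristics, identify $u_{n}(t,y)=\phi\bigl(\pi_{(V)}(0,y;\overleftarrow{x^{n}}^{t})_{t}\bigr)$, pass to the limit via the Universal Limit Theorem, and deduce uniqueness and continuity of the solution map from the same estimates. The gap is exactly at the step you yourself flag as the main obstacle, and as written it does not go through because you work at the $\frac{1}{p}$-H\"{o}lder scale. First, in Step 2 you invoke ``continuity of the reversal map in $\frac{1}{p}$-H\"{o}lder topology'' and a modulus of continuity of the It\^{o}--Lyons map controlled in $d_{\frac{1}{p}-H\ddot{o}l}$; but the approximating sequence $S_{[p]}(x^{n})$ converges to $\mathbf{x}$ only uniformly with uniform $\frac{1}{p}$-H\"{o}lder bounds, \emph{not} in $d_{\frac{1}{p}-H\ddot{o}l}$ (a genuinely rough weak geometric $p$-rough path is in general not a $d_{\frac{1}{p}-H\ddot{o}l}$-limit of lifted Lipschitz paths -- that is precisely why Definition \ref{solution of RPDE} only demands uniform convergence with uniform bounds). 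Second, the claim in Step 3 that $t\longmapsto\overleftarrow{\mathbf{x}}^{t}$ (after rescaling) is continuous into $C^{\frac{1}{p}-H\ddot{o}l}$ is false for a genuinely $\frac{1}{p}$-H\"{o}lder rough $\mathbf{x}$: it is the same phenomenon as translation failing to be continuous in H\"{o}lder norm. Third, even granting a modulus of continuity uniform in $(y,t)$, uniform convergence of $u_{n}$ requires $\sup_{t\in[0,T]}d_{\frac{1}{p^{\prime}}-H\ddot{o}l;[0,t]}\bigl(\overleftarrow{\mathbf{x}^{n,}}^{t},\overleftarrow{\mathbf{x}}^{t}\bigr)\rightarrow 0$ for a suitable exponent, and this is never established in the proposal; it is asserted via ``rescaling'', which does not address the metric mismatch.

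The repair is the interpolation device that carries the paper's proof: choose $p^{\prime}\in(p,\gamma)$, note that uniform convergence together with uniform $\frac{1}{p}$-H\"{o}lder bounds implies convergence in $d_{\frac{1}{p^{\prime}}-H\ddot{o}l}$ (cf.\ \cite{friz-victoir-2006}, together with the local equivalence of $d_{0}$ and $d_{\infty}$), and use the uniform continuity of the It\^{o} map on $\bigl\{\left\Vert\mathbf{x}\right\Vert_{\frac{1}{p}-H\ddot{o}l}\leq M\bigr\}$ at the $\frac{1}{p^{\prime}}$ scale (legitimate since $\gamma>p^{\prime}$). For the uniformity in $t$ your ``same-$t$ comparison'' can in fact be completed: $\sup_{t}d_{\infty;[0,t]}\bigl(\overleftarrow{\mathbf{x}^{n,}}^{t},\overleftarrow{\mathbf{x}}^{t}\bigr)\leq d_{\infty;[0,T]}(\mathbf{x}^{n},\mathbf{x})$ and the reversed paths inherit the uniform H\"{o}lder bounds, so interpolation gives $\sup_{t}d_{\frac{1}{p^{\prime}}-H\ddot{o}l;[0,t]}\bigl(\overleftarrow{\mathbf{x}^{n,}}^{t},\overleftarrow{\mathbf{x}}^{t}\bigr)\rightarrow 0$, after which your quantitative argument (with $\left\Vert D\phi\right\Vert_{\infty}<\infty$) yields the uniform convergence and, with the same estimate, the continuity of $\mathbf{x}\longmapsto\Pi_{(V)}(0,\phi;\mathbf{x})$. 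This is the content the paper supplies through its equicontinuity argument (Generalized Davie lemma for the $t$-increments, uniform continuity of the It\^{o} map on H\"{o}lder-bounded sets, and interpolation); once you carry out the displayed estimate your route is a legitimate, slightly more direct variant, but without it the uniform-in-$(t,y)$ convergence -- the heart of the theorem -- is only asserted.
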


\begin{proof}
Let $\left( x^{n}\right) _{n\in \mathbb{N}}$ be a sequence of Lipschitz
paths such that,%
\begin{equation}
S_{\left[ p\right] }\left( x^{n}\right) \equiv \mathbf{x}^{n}\longrightarrow 
\mathbf{x}  \label{uniform convergence for signature of x(n)}
\end{equation}%
uniformly on $\left[ 0,T\right] $ and, 
\begin{equation*}
\sup_{n}\left\Vert \mathbf{x}^{n}\right\Vert _{\frac{1}{p}-H\ddot{o}l;\left[
0,T\right] }<\infty \text{.}
\end{equation*}%
If we consider the time reversed paths, $\overleftarrow{\mathbf{x}^{n,}}%
_{\cdot }^{t}:=\mathbf{x}_{t-\cdot }^{n}$, we deduce from (\ref{uniform
convergence for signature of x(n)}), that for each fixed $t\in \left( 0,T%
\right] $, 
\begin{equation*}
\overleftarrow{\mathbf{x}^{n,}}_{s}^{t}\longrightarrow \overleftarrow{%
\mathbf{x}}_{s}^{t}
\end{equation*}%
uniformly in $s\in \left[ 0,t\right] $. \ Furthermore, for $0\leq s<u\leq t$,%
\begin{equation*}
\left\vert \overleftarrow{\mathbf{x}^{n,}}_{s,u}^{t}\right\vert =\left\vert 
\mathbf{x}_{t-u,t-s}^{n}\right\vert \leq \left\Vert \mathbf{x}%
^{n}\right\Vert _{\frac{1}{p}-H\ddot{o}l;\left[ 0,T\right] }\left\vert
u-s\right\vert ^{\frac{1}{p}}
\end{equation*}%
and hence, 
\begin{equation}
\sup_{n\in \mathbb{N}}\left\Vert \overleftarrow{\mathbf{x}^{n,}}%
^{t}\right\Vert _{\frac{1}{p}-H\ddot{o}l;\left[ 0,t\right] }\leq \sup_{n\in 
\mathbb{N}}\left\Vert \mathbf{x}^{n}\right\Vert _{\frac{1}{p}-H\ddot{o}l;%
\left[ 0,T\right] }<\infty \text{.}
\label{uniform holder bounds in existence proof}
\end{equation}%
From the Universal Limit Theorem \ref{Thm ULT}, we deduce that $\pi _{\left(
V\right) }\left( 0,y;\overleftarrow{\mathbf{x}^{n,}}^{t}\right) _{s}$
converges uniformly in $s\in \left[ 0,t\right] $ to $\pi _{\left( V\right)
}\left( 0,y;\overleftarrow{\mathbf{x}}^{t}\right) _{s}$, the unique solution
of the RDE, 
\begin{equation}
dy_{s}=V\left( y_{s}\right) d\overleftarrow{\mathbf{x}}_{s}^{t}
\label{RDE existence proof}
\end{equation}%
started at $y\in \mathbb{R}^{e}$ at time $0$. \ In particular we get that 
\begin{equation*}
\pi _{\left( V\right) }\left( 0,y;\overleftarrow{\mathbf{x}^{n,}}^{t}\right)
_{t}\longrightarrow \pi _{\left( V\right) }\left( 0,y;\overleftarrow{\mathbf{%
x}}^{t}\right) _{t}\text{.}
\end{equation*}%
This can of course be done for each $t\in \left[ 0,T\right] $.

Our next task is to prove that the family%
\begin{equation*}
\left\{ \left[ 0,T\right] \times \mathbb{R}^{e}\ni \left( t,y\right)
\longmapsto \pi _{\left( V\right) }\left( 0,y;\overleftarrow{\mathbf{x}^{n,}}%
^{t}\right) _{t}\in \mathbb{R}^{e}\right\} _{n\in \mathbb{N}}
\end{equation*}%
is equicontinuous. \ For $t,t^{\prime }\in \left[ 0,T\right] $ (w.l.o.g $%
t^{\prime }<t$) \ and $y,y^{\prime }\in \mathbb{R}^{e}$,%
\begin{eqnarray}
\left\vert \pi _{\left( V\right) }\left( 0,y;\overleftarrow{\mathbf{x}^{n,}}%
^{t}\right) _{t}-\pi _{\left( V\right) }\left( 0,y^{\prime };\overleftarrow{%
\mathbf{x}^{n,}}^{t^{\prime }}\right) _{t^{\prime }}\right\vert  &\leq
&\left\vert \pi _{\left( V\right) }\left( 0,y;\overleftarrow{\mathbf{x}^{n,}}%
^{t}\right) _{t}-\pi _{\left( V\right) }\left( 0,y;\overleftarrow{\mathbf{x}%
^{n,}}^{t}\right) _{t^{\prime }}\right\vert   \label{equicontinuity ineq} \\
&&+\left\vert \pi _{\left( V\right) }\left( 0,y;\overleftarrow{\mathbf{x}%
^{n,}}^{t}\right) _{t^{\prime }}-\pi _{\left( V\right) }\left( 0,y^{\prime };%
\overleftarrow{\mathbf{x}^{n,}}^{t}\right) _{t^{\prime }}\right\vert 
\label{equicontinuity ineq2} \\
&&+\left\vert \pi _{\left( V\right) }\left( 0,y^{\prime };\overleftarrow{%
\mathbf{x}^{n,}}^{t}\right) _{t^{\prime }}-\pi _{\left( V\right) }\left(
0,y^{\prime };\overleftarrow{\mathbf{x}^{n,}}^{t^{\prime }}\right)
_{t^{\prime }}\right\vert \text{.}  \label{equicontinuity ineq3}
\end{eqnarray}%
From the Generalized Davie Lemma in \cite{friz-Victoir-2008}, we get that,%
\begin{equation*}
\left\vert \pi _{\left( V\right) }\left( 0,y;\overleftarrow{\mathbf{x}^{n,}}%
^{t}\right) _{t}-\pi _{\left( V\right) }\left( 0,y;\overleftarrow{\mathbf{x}%
^{n,}}^{t}\right) _{t^{\prime }}\right\vert \leq C\left\vert t-t^{\prime
}\right\vert ^{\frac{1}{p}}
\end{equation*}%
where the constant $C$ can be chosen to be independent of both $n$ and $t$,
but may depend on $T$ and%
\begin{equation}
A:=\sup_{s\in \left[ 0,T\right] }\sup_{n\in \mathbb{N}}\left\Vert 
\overleftarrow{\mathbf{x}^{n,}}^{s}\right\Vert _{\frac{1}{p}-H\ddot{o}l;%
\left[ 0,s\right] }\leq \sup_{s\in \left[ 0,T\right] }\sup_{n\in \mathbb{N}%
}\left\Vert \mathbf{x}^{n}\right\Vert _{\frac{1}{p}-H\ddot{o}l;\left[ 0,T%
\right] }<\infty \text{.}  \label{uniform in t and n holder bounds}
\end{equation}%
For (\ref{equicontinuity ineq2}) and (\ref{equicontinuity ineq3}), we need
the uniform continuity on $\mathbb{R}^{e}\times \left\{ \mathbf{x:}%
\left\Vert x\right\Vert _{\frac{1}{p}-H\ddot{o}l;\left[ 0,T\right] }\leq
M\right\} $, $M>0$, of the It\^{o} map $\left( y,\mathbf{x}\right)
\longmapsto \pi _{\left( V\right) }\left( 0,y;\mathbf{x}\right) $. \ In
fact, 
\begin{equation*}
\left\vert \pi _{\left( V\right) }\left( 0,y;\overleftarrow{\mathbf{x}^{n,}}%
^{t}\right) _{t^{\prime }}-\pi _{\left( V\right) }\left( 0,y^{\prime };%
\overleftarrow{\mathbf{x}^{n,}}^{t}\right) _{t^{\prime }}\right\vert \leq
\left\vert \pi _{\left( V\right) }\left( 0,y;\overleftarrow{\mathbf{x}^{n,}}%
^{t}\right) -\pi _{\left( V\right) }\left( 0,y^{\prime };\overleftarrow{%
\mathbf{x}^{n,}}^{t}\right) \right\vert _{\infty ;\left[ 0,t^{\prime }\right]
}\longrightarrow 0
\end{equation*}%
uniformly in $n$, as $\left\vert y-y^{\prime }\right\vert \longrightarrow 0$%
, because the uniform bounds in (\ref{uniform in t and n holder bounds})
guarantee that we stay on a bounded set which does not depend on $n$ or $t$.

For (\ref{equicontinuity ineq3})\ we have,%
\begin{equation*}
\left\vert \pi _{\left( V\right) }\left( 0,y^{\prime };\overleftarrow{%
\mathbf{x}^{n,}}^{t}\right) _{t^{\prime }}-\pi _{\left( V\right) }\left(
0,y^{\prime };\overleftarrow{\mathbf{x}^{n,}}^{t^{\prime }}\right)
_{t^{\prime }}\right\vert \leq \left\vert \pi _{\left( V\right) }\left(
0,y^{\prime };\overleftarrow{\mathbf{x}^{n,}}^{t}\right) -\pi _{\left(
V\right) }\left( 0,y^{\prime };\overleftarrow{\mathbf{x}^{n,}}^{t}\right)
\right\vert _{\infty ;\left[ 0,t^{\prime }\right] }\text{.}
\end{equation*}%
Again using the uniform continuity on $\mathbb{R}^{e}\times \left\{ \mathbf{%
x:}\left\Vert x\right\Vert _{\frac{1}{p}-H\ddot{o}l;\left[ 0,T\right] }\leq
M\right\} $ of the It\^{o} map $\left( y,\mathbf{x}\right) \longmapsto \pi
_{\left( V\right) }\left( 0,y;\mathbf{x}\right) $, 
\begin{equation*}
\left\vert \pi _{\left( V\right) }\left( 0,y^{\prime };\overleftarrow{%
\mathbf{x}^{n,}}^{t}\right) -\pi _{\left( V\right) }\left( 0,y^{\prime };%
\overleftarrow{\mathbf{x}^{n,}}^{t}\right) \right\vert _{\infty ;\left[
0,t^{\prime }\right] }\longrightarrow 0
\end{equation*}%
uniformly in $n$, as $\left\vert t-t^{\prime }\right\vert \longrightarrow 0$%
, if we can show that 
\begin{equation}
d_{\frac{1}{p^{\prime }}-H\ddot{o}l;\left[ 0,t^{\prime }\right] }\left( 
\overleftarrow{\mathbf{x}^{n,}}^{t},\overleftarrow{\mathbf{x}^{n,}}%
^{t^{\prime }}\right) \longrightarrow 0  \label{convergence of xnt and xnt'}
\end{equation}%
uniformly in $n$, as $\left\vert t-t^{\prime }\right\vert \longrightarrow 0$%
, for some $p^{\prime }>p$. \ From the interpolation results proved in \cite%
{friz-victoir-2006}, we deduce that (\ref{convergence of xnt and xnt'}) will
follow if we show that%
\begin{equation}
\sup_{s\in \left[ 0,T\right] }\sup_{n\in \mathbb{N}}\left\Vert 
\overleftarrow{\mathbf{x}^{n,}}^{s}\right\Vert _{\frac{1}{p}-H\ddot{o}l;%
\left[ 0,s\right] }<\infty   \label{uniform bounds on xnt}
\end{equation}%
and 
\begin{equation}
d_{0;\left[ 0,t^{\prime }\right] }\left( \overleftarrow{\mathbf{x}^{n,}}^{t},%
\overleftarrow{\mathbf{x}^{n,}}^{t^{\prime }}\right) =\sup_{0\leq s<u\leq
t^{\prime }}d\left( \overleftarrow{\mathbf{x}_{s,u}^{n,}}^{t},\overleftarrow{%
\mathbf{x}_{s,u}^{n,}}^{t^{\prime }}\right) \longrightarrow 0
\label{d0 convergence}
\end{equation}%
uniformly in $n$ as $\left\vert t-t^{\prime }\right\vert \longrightarrow 0$.
\ The required uniform bounds (\ref{uniform bounds on xnt}) are precisely
those obtained in (\ref{uniform in t and n holder bounds}). \ This estimate
guarantees that we stay on a bounded set which does not depend on $n$ or $t$%
. \ In \cite{friz-Victoir-2007a}, the distances $d_{0}$ and $d_{\infty }$
are shown to be locally $\frac{1}{\left[ p\right] }$-H\"{o}lder equivalent,
and hence (\ref{d0 convergence}) will follow if we can show that,%
\begin{equation*}
d_{\infty ;\left[ 0,t^{\prime }\right] }\left( \overleftarrow{\mathbf{x}^{n,}%
}^{t},\overleftarrow{\mathbf{x}^{n,}}^{t^{\prime }}\right) =\sup_{0\leq
s\leq t^{\prime }}d\left( \overleftarrow{\mathbf{x}_{s}^{n,}}^{t},%
\overleftarrow{\mathbf{x}_{s}^{n,}}^{t^{\prime }}\right) \longrightarrow 0
\end{equation*}%
uniformly in $n$, as $\left\vert t-t^{\prime }\right\vert \longrightarrow 0$%
. \ But,%
\begin{eqnarray*}
d_{\infty ;\left[ 0,t^{\prime }\right] }\left( \overleftarrow{\mathbf{x}^{n,}%
}^{t},\overleftarrow{\mathbf{x}^{n,}}^{t^{\prime }}\right)  &=&\sup_{0\leq
s\leq t^{\prime }}d\left( \overleftarrow{\mathbf{x}_{s}^{n,}}^{t},%
\overleftarrow{\mathbf{x}_{s}^{n,}}^{t^{\prime }}\right)  \\
&=&\sup_{0\leq s\leq t^{\prime }}d\left( \mathbf{x}_{t-s}^{n},\mathbf{x}%
_{t^{\prime }-s}^{n}\right)  \\
&\leq &\left\Vert \mathbf{x}^{n}\right\Vert _{\frac{1}{p}-H\ddot{o}l;\left[
0,t^{\prime }\right] }\left\vert t-t^{\prime }\right\vert ^{\frac{1}{p}} \\
&\leq &A\left\vert t-t^{\prime }\right\vert ^{\frac{1}{p}}
\end{eqnarray*}%
and hence, the required convergence (uniform in $n$) is obtained. \
Therefore the family 
\begin{equation*}
\left\{ \left( t,y\right) \longmapsto \pi _{\left( V\right) }\left( 0,y;%
\overleftarrow{\mathbf{x}^{n,}}^{t}\right) _{t}\right\} _{n\in \mathbb{N}}
\end{equation*}%
is indeed equicontinuous in $t$ and $y\in \mathbb{R}^{e}$.

From the pointwise convergence and the equicontinuity, we can conclude that,%
\begin{equation*}
\pi _{\left( V\right) }\left( 0,y;\overleftarrow{\mathbf{x}^{n,}}^{t}\right)
_{t}\longrightarrow \pi _{\left( V\right) }\left( 0,y;\overleftarrow{\mathbf{%
x}}^{t}\right) _{t}
\end{equation*}%
uniformly in $t\in \left[ 0,T\right] $ and $y\in \mathbb{R}^{e}$. \ The
initial function $\phi $ is assumed to be $C_{b}^{1}\left( \mathbb{R}^{e},%
\mathbb{R}\right) $ and hence we get that%
\begin{equation}
\phi \left( \pi _{\left( V\right) }\left( 0,y;\overleftarrow{\mathbf{x}^{n,}}%
^{t}\right) _{t}\right) \longrightarrow \phi \left( \pi _{\left( V\right)
}\left( 0,y;\overleftarrow{\mathbf{x}}^{t}\right) _{t}\right) 
\label{convergence of solutions}
\end{equation}%
uniformly in $t\in \left[ 0,T\right] $ and $y\in \mathbb{R}^{e}$. \
Therefore if we define,%
\begin{equation*}
u\left( t,y\right) =\phi \left( \pi _{\left( V\right) }\left( 0,y;%
\overleftarrow{\mathbf{x}}^{t}\right) _{t}\right) 
\end{equation*}%
we immediately see that $u$ is a solution of (\ref{1st order PDE weak
geometric rough path}).

Having established the existence of a solution of (\ref{1st order PDE weak
geometric rough path}), we now show that a $\mathrm{Lip}^{\gamma }$
assumption on the vector fields guarantees the uniqueness of solutions. \
Suppose that $v:\left[ 0,T\right] \times \mathbb{R}^{e}\longrightarrow 
\mathbb{R}$ is another solution of (\ref{1st order PDE weak geometric rough
path}). \ Then there exists a sequence of Lipschitz paths $z^{n}:\left[ 0,T%
\right] \longrightarrow \mathbb{R}^{d}$ such that, 
\begin{equation*}
S_{\left[ p\right] }\left( z^{n}\right) \equiv \mathbf{z}^{n}\longrightarrow 
\mathbf{x}
\end{equation*}%
and $v\left( t,y\right) =\lim_{n\rightarrow \infty }v_{n}\left( t,y\right) $%
, with $v_{n}$ solving, 
\begin{eqnarray*}
dv_{n}\left( t,y\right) +V_{i}^{j}\left( y\right) \frac{\partial v_{n}\left(
t,y\right) }{\partial y^{j}}dz_{t}^{n,i} &=&0 \\
v_{n}\left( 0,y\right)  &=&\phi \left( y\right) \text{.}
\end{eqnarray*}%
Then, 
\begin{eqnarray*}
v\left( t,y\right)  &=&\lim_{n\rightarrow \infty }v_{n}\left( t,y\right)
=\lim_{n\rightarrow \infty }\phi \left( \pi \left( 0,y;\overleftarrow{%
\mathbf{z}^{n,}}^{t}\right) _{t}\right)  \\
&=&\phi \left( \pi _{\left( V\right) }\left( 0,y;\overleftarrow{\mathbf{x}}%
^{t}\right) _{t}\right)  \\
&=&u\left( t,y\right) 
\end{eqnarray*}%
since $\pi _{\left( V\right) }\left( 0,y;\overleftarrow{\mathbf{z}^{n,}}%
^{t}\right) $ converges to the unique solution $\pi _{\left( V\right)
}\left( 0,y;\overleftarrow{\mathbf{x}}^{t}\right) $ of the RDE (\ref{RDE
existence proof}). \ Therefore the rough solution $u\left( t,y\right) =\phi
\left( \pi _{\left( V\right) }\left( 0,y;\overleftarrow{\mathbf{x}}%
^{t}\right) _{t}\right) $ is indeed unique.

We still have to prove the continuity of the map which sends the driving
signal $\mathbf{x}$ to the solution $u$. \ To this end, suppose that $\left( 
\mathbf{x}^{n}\right) _{n\in \mathbb{N}}$ is a sequence of weak geometric $p$%
-rough paths converging to $\mathbf{x}$ in $\frac{1}{p}$-H\"{o}lder
topology, i.e. $d_{\frac{1}{p}-H\ddot{o}l;\ \left[ 0,T\right] }\left( 
\mathbf{x}^{n},\mathbf{x}\right) \longrightarrow 0$. \ This implies a
fortiori uniform convergence with the uniform bounds $\sup_{n}\left\Vert 
\mathbf{x}^{n}\right\Vert _{\frac{1}{p}-H\ddot{o}l;\left[ 0,T\right]
}<\infty $. \ Using the same reasoning as in the existence part of the
proof, we can show that, 
\begin{equation*}
\pi _{\left( V\right) }\left( 0,y;\overleftarrow{\mathbf{x}^{n,}}^{t}\right)
_{t}\longrightarrow \pi _{\left( V\right) }\left( 0,y;\overleftarrow{\mathbf{%
x}}^{t}\right) _{t}
\end{equation*}%
uniformly in $t\in \left[ 0,T\right] $ and $y\in \mathbb{R}^{e}$. \ Thus,%
\begin{equation*}
u_{n}\left( t,y\right) =\phi \left( \pi _{\left( V\right) }\left( 0,y;%
\overleftarrow{\mathbf{x}^{n,}}^{t}\right) _{t}\right) \longrightarrow \phi
\left( \pi _{\left( V\right) }\left( 0,y;\overleftarrow{\mathbf{x}}%
^{t}\right) _{t}\right) =u\left( t,y\right) 
\end{equation*}%
in $C\left( \left[ 0,T\right] \times \mathbb{R}^{e}\right) $ equipped with
the uniform topology. \ Therefore we conclude that the map which sends the
driving signal to the solution is indeed continuous in the uniform topology.
\end{proof}

\begin{remark}
If we take our initial function $\phi $ to be bounded and uniformly
continuous on $\mathbb{R}^{e}$ i.e. $\phi \in BUC\left( \mathbb{R}%
^{e}\right) $\footnote{$BUC\left( \mathcal{X}\right) $ is the space of
bounded uniformly continuous functions defined on $\mathcal{X}$. \ If $u\in
BUC\left( \mathcal{X}\right) $, then, $\left\Vert u\right\Vert _{BUC\left( 
\mathcal{X}\right) }=\sup_{x\in \mathcal{X}}\left\vert u\left( x\right)
\right\vert _{\mathcal{X}}$.}, then similar reasoning as that used in the
above proof, allows us to conclude that the map 
\begin{equation*}
\mathbf{x}\longrightarrow \Pi _{\left( V\right) }\left( 0,\phi ;\mathbf{x}%
\right) :=\phi \left( \pi _{\left( V\right) }\left( 0,\cdot ;\overleftarrow{%
\mathbf{x}}^{\cdot }\right) _{\cdot }\right) 
\end{equation*}%
is continuous from $C^{\frac{1}{p}-H\ddot{o}l}\left( \left[ 0,T\right] ,G^{%
\left[ p\right] }\left( \mathbb{R}^{d}\right) \right) $ into $BUC\left( %
\left[ 0,T\right] \times \mathbb{R}^{e}\right) $. \ In this case however, $%
\phi \left( \pi _{\left( V\right) }\left( 0,y;\overleftarrow{\mathbf{x}^{n,}}%
^{t}\right) _{t}\right) $ must be interpreted as a weak (e.g. viscosity)\
solution of 
\begin{eqnarray*}
du_{n}\left( t,y\right) +V_{i}^{j}\left( y\right) \frac{\partial u_{n}\left(
t,y\right) }{\partial y^{j}}dx_{t}^{n,i} &=&0 \\
u_{n}\left( 0,y\right)  &=&\phi \left( y\right) \text{.}
\end{eqnarray*}
\end{remark}

\begin{remark}
\label{phi_unbd}If we take $\phi \in C^{1}\left( \mathbb{R}^{e}\right) $,
but not bounded, the map $\mathbf{x}\longrightarrow \Pi _{\left( V\right)
}\left( 0,\phi ;\mathbf{x}\right) $ is continuous in the compact uniform
topology.
\end{remark}

In the next corollary, we show that as in the case of classical and first
order SPDEs, if we assume more regularity on the vector fields and the
initial function, our solution will be smoother in $y$.

\begin{corollary}
\label{regularity of solutions}Let $p\geq 1$, \thinspace $k\in \left\{
1,2,\ldots \right\} $ and let $\mathbf{x}$ be a weak geometric $p$-rough
path. \ Assume that,

\begin{enumerate}
\item $V=\left( V_{1},\ldots ,V_{d}\right) $ is a collection of $\mathrm{Lip}%
^{\gamma }$ vector fields on $\mathbb{R}^{e}$ for $\gamma >p-1+k$;

\item $\phi \in C^{k}\left( \mathbb{R}^{e};\mathbb{R}\right) $.
\end{enumerate}

Then the RPDE 
\begin{eqnarray*}
du\left( t,y\right) +\nabla u\left( t,y\right) \cdot V\left( y\right) d%
\mathbf{x}_{t} &=&0 \\
u\left( 0,y\right) &=&\phi \left( y\right)
\end{eqnarray*}%
has a unique solution $u\in C^{k}\left( \left[ 0,T\right] \times \mathbb{R}%
^{e},\mathbb{R}\right) $.
\end{corollary}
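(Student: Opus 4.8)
The plan is to reduce the statement to a regularity property of RDE flows and then apply the chain rule. Since $\gamma>p-1+k\geq p$, the hypotheses on the vector fields in Theorem \ref{Existence Theorem} are met; together with Remark \ref{phi_unbd}, which handles unbounded $\phi$ (in particular $\phi\in C^{k}\subset C^{1}$), the RPDE has a unique solution, given explicitly by
\begin{equation*}
u\left( t,y\right) =\phi \left( \pi _{\left( V\right) }\left( 0,y;\overleftarrow{\mathbf{x}}^{t}\right) _{t}\right) .
\end{equation*}
So it remains to show that this function is $C^{k}$ in $y$, and that $u$ together with its $y$-derivatives up to order $k$ is jointly continuous on $\left[ 0,T\right] \times \mathbb{R}^{e}$ (the natural reading of "$u\in C^{k}\left( \left[ 0,T\right] \times \mathbb{R}^{e}\right) $", since $u$ has no reason to be differentiable in $t$).

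The core is the following claim, which I would establish by induction on $k$: \emph{if $V=\left( V_{1},\dots ,V_{d}\right) $ are $\mathrm{Lip}^{\gamma }$ vector fields with $\gamma >p-1+k$, then for every weak geometric $p$-rough path $\mathbf{w}$ the map $y_{0}\mapsto \pi _{\left( V\right) }\left( 0,y_{0};\mathbf{w}\right) _{s}$ is $C^{k}$, with bounds uniform in $s$ and over bounded sets of driving paths.} For $k=1$ this is the differentiability of the It\^{o}--Lyons map in its starting point (\cite{FVbook}, \cite{friz-Victoir-2008}): the Jacobian $J_{s}=D_{y_{0}}\pi _{\left( V\right) }\left( 0,y_{0};\mathbf{w}\right) _{s}$ solves the linear RDE $dJ=DV\left( y\right) J\,d\mathbf{w}$ whose coefficient $DV$ is $\mathrm{Lip}^{\gamma -1}$ with $\gamma -1>p-1$ --- exactly enough for this (linear) rough equation to be well posed and continuous in $\mathbf{w}$ --- and differentiability then follows from the usual comparison of $\pi \left( 0,y_{0}+h;\cdot \right) $ with $\pi \left( 0,y_{0};\cdot \right) +hJ$. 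For the inductive step one notes that the pair $\left( \pi _{\left( V\right) }\left( 0,y_{0};\mathbf{w}\right) ,J\right) $ is itself the RDE flow, on $\mathbb{R}^{e}\times \mathbb{R}^{e\times e}$, of the prolonged vector fields $\widetilde{V}_{i}\left( y,J\right) =\left( V_{i}\left( y\right) ,DV_{i}\left( y\right) J\right) $, which are $\mathrm{Lip}^{\gamma -1}$ (the linear growth in $J$ being harmless, handled by the standard localization for linear RDEs). As $\gamma -1>p-1+\left( k-1\right) $, the induction hypothesis applies to this prolonged flow and shows it is $C^{k-1}$ in $\left( y_{0},J_{0}\right) $; setting $J_{0}=\mathrm{Id}$ gives that $y_{0}\mapsto D_{y_{0}}\pi $ is $C^{k-1}$, i.e. $y_{0}\mapsto \pi _{\left( V\right) }\left( 0,y_{0};\mathbf{w}\right) _{s}$ is $C^{k}$. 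Iterating the prolongation $k$ times realizes every $y$-derivative of order $\leq k$ of the flow as a component of the $k$-jet flow of an RDE driven by $\mathbf{w}$.

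Applying the claim with $\mathbf{w}=\overleftarrow{\mathbf{x}}^{t}$, the map $y\mapsto \pi _{\left( V\right) }\left( 0,y;\overleftarrow{\mathbf{x}}^{t}\right) _{t}$ is $C^{k}$, and composing with $\phi \in C^{k}$ (Fa\`{a} di Bruno) shows $u\left( t,\cdot \right) \in C^{k}\left( \mathbb{R}^{e}\right) $, with $D_{y}^{j}u$ for $j\leq k$ expressed through the derivatives of $\phi $ at $\pi _{\left( V\right) }\left( 0,y;\overleftarrow{\mathbf{x}}^{t}\right) _{t}$ and the $y$-derivatives of order $\leq k$ of the $k$-jet flow of the RDE driven by $\overleftarrow{\mathbf{x}}^{t}$. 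For joint continuity in $\left( t,y\right) $ I would rerun the argument of the proof of Theorem \ref{Existence Theorem} for this $k$-jet flow: approximate $\mathbf{x}$ by $\mathbf{x}^{n}=S_{\left[ p\right] }\left( x^{n}\right) $, use the $\tfrac{1}{p}$-H\"{o}lder bounds on $\overleftarrow{\mathbf{x}^{n,}}^{t}$ uniform in $t$ and $n$, the continuity of $t\mapsto \overleftarrow{\mathbf{x}}^{t}$, and the continuity of the jet-flow It\^{o}--Lyons map on bounded sets (via $d_{\frac{1}{p}-H\ddot{o}l}$, using the interpolation of \cite{friz-victoir-2006} and the local H\"{o}lder equivalence of $d_{0}$ and $d_{\infty }$ from \cite{friz-Victoir-2007a}) to obtain convergence and equicontinuity in $\left( t,y\right) $ of the jet flows; since the approximating $u_{n}$ are genuine classical solutions that are $C^{k}$ in $y$, the limiting $C^{k}$ function is a solution in the sense of Definition \ref{solution of RPDE}, and by uniqueness it is \emph{the} solution.

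The step I expect to be the main obstacle is the flow-regularity claim at the sharp threshold $\gamma >p-1+k$: one must verify that $D^{j}V\in \mathrm{Lip}^{\gamma -j}$ retains enough regularity ($\gamma -j>p-1$ for all $j\leq k$) for each linear rough equation appearing in the $k$-jet system, while controlling the non-compactness and linear growth of the prolonged systems and keeping all estimates uniform over the bounded sets of rough paths used in Definition \ref{solution of RPDE}. (Because $\phi $ here is merely $C^{k}$, not $C_{b}^{k}$, one should not expect continuity of the solution map in the uniform topology, only in the compact-uniform topology, cf. Remark \ref{phi_unbd}; this does not affect the assertion that $u$ is $C^{k}$.)
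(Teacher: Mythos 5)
Your proposal is correct and follows essentially the same route as the paper: the paper's proof consists precisely of the explicit representation $u(t,y)=\phi\left(\pi_{(V)}\left(0,y;\overleftarrow{\mathbf{x}}^{t}\right)_{t}\right)$ together with a citation of \cite{FVbook} for the fact that $\mathrm{Lip}^{\gamma}$ vector fields with $\gamma>p-1+k$ give a $C^{k}$ flow in the starting point, followed by composition with $\phi\in C^{k}$. The only differences are that you sketch a proof (by induction on prolonged/jet RDE systems) of the flow-regularity result that the paper merely cites, and you additionally argue joint continuity in $(t,y)$ and the unbounded-$\phi$ point via Remark \ref{phi_unbd}, which the paper's two-line proof glosses over.
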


\begin{proof}
From our assumption on the vector fields, we know that for each $t\in \left[
0,T\right] $,%
\begin{equation*}
y\longmapsto \pi _{\left( V\right) }\left( 0,y;\overleftarrow{\mathbf{x}}%
^{t}\right) _{t}
\end{equation*}%
is $k$ times continuously differentiable (c.f. \cite{FVbook}). \ Therefore
we immediately deduce that for each $t\in \left[ 0,T\right] $, 
\begin{equation*}
u\left( t,\cdot \right) =\phi \left( \pi _{\left( V\right) }\left( 0,\cdot ;%
\overleftarrow{\mathbf{x}}^{t}\right) _{t}\right) 
\end{equation*}%
is also $k$ times continuously differentiable.
\end{proof}

\subsection{Second order linear RPDEs\label{Subsec 2nd order RPDEs}}

In what follows we study second order linear PDEs driven by weak geometric $%
p $-rough paths,%
\begin{eqnarray}
du\left( t,y\right) &=&L_{t}u\left( t,y\right) dt-\nabla u\left( t,y\right)
\cdot V\left( y\right) d\mathbf{x}_{t}  \label{2nd order RPDE 2} \\
u\left( 0,y\right) &=&\phi \left( y\right)  \notag
\end{eqnarray}%
where $L_{t}$ is an elliptic operator of the form, 
\begin{equation}
L_{t}=\frac{1}{2}a^{ij}\left( t,\cdot \right) \frac{\partial ^{2}}{\partial
y^{i}\partial y^{j}}+b^{i}\left( t,\cdot \right) \frac{\partial }{\partial
y^{i}}  \label{elliptic operator}
\end{equation}%
with $a:\left[ 0,T\right] \times \mathbb{R}^{e}\longrightarrow S_{e}$ and $b:%
\left[ 0,T\right] \times \mathbb{R}^{e}\longrightarrow \mathbb{R}^{e}$. \
These equations can be regarded as perturbations of first order RPDEs. \ The
approach we use to prove existence and uniqueness of solutions, is based on
a technique for second order linear SPDEs described by Kunita in \cite%
{Kun-90}. \ Kunita shows that solving%
\begin{eqnarray}
du\left( t,y\right) &=&L_{t}u\left( t,y\right) dt-\nabla u\left( t,y\right)
\cdot V\left( y\right) \circ dB_{t}  \label{2nd order SPDE} \\
u\left( 0,y\right) &=&\phi \left( y\right)  \notag
\end{eqnarray}%
can be reduced to proving the existence and uniqueness of solutions for the
following second order PDE,%
\begin{equation}
\frac{\partial v}{\partial t}=\tilde{L}_{t}v\ \ \ \ \ \ v\left( 0,y\right)
=\phi \left( y\right)  \label{perturbed pde in intro}
\end{equation}%
where the coefficients of $\tilde{L}_{t}$ are now random.\footnote{%
These coefficients depend on the flow of the SDE $dy_{t}=V\left(
y_{t}\right) \circ dB_{t}$.}

In what follows we are going to show that these ideas can be generalized to
equations driven by weak geometric $p$-rough paths. \ In our case the PDE
analogue to (\ref{perturbed pde in intro}) will have coefficients which
depend on the flow of an RDE, and so we will sometimes speak of PDEs with
rough coefficients.

Suppose we are given the elliptic operator $L_{t}$,%
\begin{equation*}
L_{t}=\frac{1}{2}a^{ij}\left( t,\cdot \right) \frac{\partial ^{2}}{\partial
y^{i}\partial y^{j}}+b^{i}\left( t,\cdot \right) \frac{\partial }{\partial
y^{i}}
\end{equation*}%
with $a:\left[ 0,T\right] \times \mathbb{R}^{e}\longrightarrow S_{e}$ and $b:%
\left[ 0,T\right] \times \mathbb{R}^{e}\longrightarrow \mathbb{R}^{e}$. \
Let $\mathbf{x}$ be a weak geometric $p$-rough path, $p\geq 1$, and let $%
V=\left( V_{1},\ldots ,V_{d}\right) $ be a collection of $\mathrm{Lip}%
^{\gamma }$ ($\gamma >p+1$)\ vector fields\footnote{%
We need this regularity condition on the vector fields, because to define $%
a_{\mathbf{x}}$ and $b_{\mathbf{x}}$ in (\ref{ax}) and (\ref{bx}) we need a $%
C^{2}$ flow for our RDE (c.f. Corollary \ref{regularity of solutions})} on $%
\mathbb{R}^{e}$. \ For each $t\in \left[ 0,T\right] $, we define the linear
map $w_{t}^{\mathbf{x}}$ on $C^{1}\left( \mathbb{R}^{e}\right) $ by,%
\begin{equation*}
w_{t}^{\mathbf{x}}:C^{1}\left( \mathbb{R}^{e}\right) \longrightarrow
C^{1}\left( \mathbb{R}^{e}\right) 
\end{equation*}%
\begin{equation*}
\ \ \ \ \ \ \ \ \ \phi \longmapsto \Pi _{\left( V\right) }\left( 0,\phi ;%
\mathbf{x}\right) _{t}
\end{equation*}%
i.e. $w_{t}^{\mathbf{x}}\left( \phi \right) $ is the solution of the RPDE (%
\ref{2nd order RPDE 2}) with $L_{t}\equiv 0$. $\ $From Subsection \ref%
{Subsec 1st order RPDEs}, we deduce that$\ w_{t}^{\mathbf{x}}\left( \phi
\right) \left( y\right) =\phi \left( \pi _{\left( V\right) }\left( 0,y;%
\overleftarrow{\mathbf{x}}^{t}\right) _{t}\right) $. \ This map is bijective
and its inverse, $\hat{w}_{t}^{\mathbf{x}}$, is given by,%
\begin{equation*}
\hat{w}_{t}^{\mathbf{x}}\left( \phi \right) \left( y\right) =\phi \left( \pi
_{\left( V\right) }\left( 0,y;\mathbf{x}\right) _{t}\right) \text{.}
\end{equation*}%
We can define a new operator $L_{t}^{\mathbf{x}}$ by,%
\begin{equation*}
L_{t}^{\mathbf{x}}=\hat{w}_{t}^{\mathbf{x}}\circ L_{t}\circ w_{t}^{\mathbf{x}%
}\text{.}
\end{equation*}%
This is again a second order operator represented by, 
\begin{equation*}
L_{t}^{\mathbf{x}}=\frac{1}{2}a_{\mathbf{x}}^{ij}\left( t,\cdot \right) 
\frac{\partial ^{2}}{\partial y^{i}\partial y^{j}}+b_{\mathbf{x}}^{i}\left(
t,\cdot \right) \frac{\partial }{\partial y^{i}}
\end{equation*}%
with 
\begin{equation}
a_{\mathbf{x}}^{ij}\left( t,y\right) =a^{kl}\left( t,\pi _{\left( V\right)
}\left( 0,y;\mathbf{x}\right) _{t}\right) \partial _{k}\pi _{\left( V\right)
}^{i}\left( 0,\cdot ;\overleftarrow{\mathbf{x}}^{t}\right) _{t}|_{\pi
_{\left( V\right) }\left( 0,y;\mathbf{x}\right) _{t}}\partial _{l}\pi
_{\left( V\right) }^{j}\left( 0,\cdot ;\overleftarrow{\mathbf{x}}^{t}\right)
_{t}|_{\pi _{\left( V\right) }\left( 0,y;\mathbf{x}\right) _{t}}  \label{ax}
\end{equation}%
and 
\begin{eqnarray}
b_{\mathbf{x}}^{i}\left( t,y\right)  &=&\frac{1}{2}a^{kl}\left( t,\pi
_{\left( V\right) }\left( 0,y;\mathbf{x}\right) _{t}\right) \partial
_{kl}\pi _{\left( V\right) }^{i}\left( 0,\cdot ;\overleftarrow{\mathbf{x}}%
^{t}\right) _{t}|_{\pi _{\left( V\right) }\left( 0,y;\mathbf{x}\right) _{t}}
\label{bx} \\
&&+b^{k}\left( t,\pi _{\left( V\right) }\left( 0,y;\mathbf{x}\right)
_{t}\right) \partial _{k}\pi _{\left( V\right) }^{i}\left( 0,\cdot ;%
\overleftarrow{\mathbf{x}}^{t}\right) _{t}|_{\pi _{\left( V\right) }\left(
0,y;\mathbf{x}\right) _{t}}\text{.}  \notag
\end{eqnarray}

To better understand why we need the operator $L_{t}^{\mathbf{x}}$, consider
the Lipschitz continuous path $x:\left[ 0,T\right] \longrightarrow \mathbb{R}%
^{d}$. \ Suppose that $u$ is a classical solution of,%
\begin{eqnarray}
du\left( t,y\right) &=&L_{t}u\left( t,y\right) dt-\nabla u\left( t,y\right)
\cdot V\left( y\right) dx_{t}  \label{Lipschitz 2nd RPDE} \\
u\left( 0,y\right) &=&\phi \left( y\right) \in C_{b}^{2}\left( \mathbb{R}%
^{e}\right)  \notag
\end{eqnarray}%
i.e. $u$ is a $C_{b}^{1,2}$ function such that, 
\begin{equation*}
u\left( t,y\right) =\phi \left( y\right) +\int_{0}^{t}L_{s}u\left(
s,y\right) ds-\int_{0}^{t}\nabla u\left( s,y\right) \cdot V\left( y\right)
dx_{s}\text{.}
\end{equation*}%
We then have the following lemma.

\begin{lemma}
\label{Lemma rpde pde}(cf. \cite{Kun-90}) $u$ is a classical solution of (%
\ref{Lipschitz 2nd RPDE}) if and only if $v\left( t,y\right) :=\hat{w}%
_{t}^{x}\left( u\left( t,\cdot \right) \right) \left( y\right) $ is a
classical solution of 
\begin{equation}
\frac{\partial v}{\partial t}=L_{t}^{x}v\ \ \ \ \ \ v\left( 0,y\right) =\phi
\left( y\right) \text{.}  \label{pde in lemma}
\end{equation}
\end{lemma}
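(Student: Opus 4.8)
The plan is to read this as the deterministic method-of-characteristics change of variables, in exactly the spirit of Kunita's Stratonovich computation, with the ODE flow of the Lipschitz path $x$ playing the role of the stochastic flow. Write $\Phi_{t}(\cdot):=\pi _{\left( V\right) }\left( 0,\cdot ;x\right) _{t}$ for the flow of the characteristic ODE $dy=V(y)\,dx$; under the standing hypotheses on $V$ it is, for each fixed $t$, a $C^{2}$ diffeomorphism of $\mathbb{R}^{e}$ whose inverse is $\Psi _{t}(\cdot ):=\pi _{\left( V\right) }\left( 0,\cdot ;\overleftarrow{x}^{t}\right) _{t}$, and $t\mapsto \Phi _{t}(y)$ is Lipschitz with $\frac{d}{dt}\Phi _{t}(y)=V(\Phi _{t}(y))\,\dot x_{t}$ for a.e.\ $t$. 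By the first-order theory of Subsection \ref{Subsec 1st order RPDEs} one has $w_{t}^{x}(\psi )(y)=\psi (\Psi _{t}(y))$ and $\hat w_{t}^{x}(\psi )(y)=\psi (\Phi _{t}(y))$, so the relation between $u$ and $v$ in the statement is simply
\[
v(t,y)=u(t,\Phi _{t}(y)),\qquad \text{equivalently}\qquad u(t,y)=v(t,\Psi _{t}(y)).
\]

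For the forward implication I would assume $u\in C_{b}^{1,2}$ solves (\ref{Lipschitz 2nd RPDE}). One first checks that $v(t,\cdot )=u(t,\cdot )\circ \Phi _{t}\in C_{b}^{2}$ for each $t$ and that $t\mapsto v(t,y)$ is Lipschitz (bounded $t$-increments of $u$, read off the integral equation, combined with Lipschitzness of $\Phi _{\cdot }(y)$), so that the fundamental theorem of calculus applies. Differentiating the composition for a.e.\ $t$,
\[
\frac{d}{dt}v(t,y)=(\partial _{t}u)(t,\Phi _{t}(y))+\nabla u(t,\Phi _{t}(y))\cdot V(\Phi _{t}(y))\,\dot x_{t}.
\]
Substituting $z=\Phi _{t}(y)$ into $(\partial _{t}u)(t,z)=(L_{t}u)(t,z)-\nabla u(t,z)\cdot V(z)\,\dot x_{t}$, the two transport terms cancel and $\frac{d}{dt}v(t,y)=(L_{t}u(t,\cdot ))(\Phi _{t}(y))$. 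Finally, since $u(t,\cdot )=w_{t}^{x}(v(t,\cdot ))$ and $L_{t}^{x}=\hat w_{t}^{x}\circ L_{t}\circ w_{t}^{x}$, one has $(L_{t}u(t,\cdot ))(\Phi _{t}(y))=\hat w_{t}^{x}\big( L_{t}(u(t,\cdot ))\big) (y)=(L_{t}^{x}v(t,\cdot ))(y)$; together with $v(0,y)=u(0,\Phi _{0}(y))=\phi (y)$ this is (\ref{pde in lemma}).

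For the converse I would run the same computation backwards. Assume $v$ is a classical solution of (\ref{pde in lemma}) and set $u(t,\cdot ):=w_{t}^{x}(v(t,\cdot ))$, i.e.\ $u(t,y)=v(t,\Psi _{t}(y))$. Differentiating $\Phi _{t}\circ \Psi _{t}=\mathrm{id}$ in $t$ gives $\frac{d}{dt}\Psi _{t}(y)=-D\Psi _{t}(y)\,V(y)\,\dot x_{t}$ for a.e.\ $t$, while the spatial chain rule gives $\nabla u(t,y)=\nabla v(t,\Psi _{t}(y))\cdot D\Psi _{t}(y)$. Hence, for a.e.\ $t$,
\[
\partial _{t}u(t,y)=(\partial _{t}v)(t,\Psi _{t}(y))-\nabla v(t,\Psi _{t}(y))\cdot D\Psi _{t}(y)\,V(y)\,\dot x_{t}=(L_{t}^{x}v(t,\cdot ))(\Psi _{t}(y))-\nabla u(t,y)\cdot V(y)\,\dot x_{t},
\]
and $(L_{t}^{x}v(t,\cdot ))(\Psi _{t}(y))=\hat w_{t}^{x}\big( L_{t}(u(t,\cdot ))\big) (\Psi _{t}(y))=(L_{t}u(t,\cdot ))(\Phi _{t}(\Psi _{t}(y)))=L_{t}u(t,y)$. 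Integrating in $t$ (the $x$-integral being an ordinary Riemann--Stieltjes integral against the Lipschitz path) and using $u(0,y)=v(0,\Psi _{0}(y))=\phi (y)$ recovers the integral form of (\ref{Lipschitz 2nd RPDE}), so $u$ is a classical solution.

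The computation itself is routine; the points that need care are the time regularity — $\Phi $ is merely Lipschitz, not $C^{1}$, in $t$, so every differential identity above holds only for a.e.\ $t$ and must be used in integrated form, and one must check that the relevant maps ($t\mapsto v(t,y)$, $t\mapsto u(t,y)$) are genuinely Lipschitz in $t$ rather than just a.e.\ differentiable, so that the fundamental theorem of calculus is applicable — and keeping straight that the conjugated operator $L_{t}^{x}=\hat w_{t}^{x}\circ L_{t}\circ w_{t}^{x}$ is, by construction, a second order operator with coefficients (\ref{ax})--(\ref{bx}), so that the extra terms produced by the chain rule reassemble automatically into $L_{t}^{x}$ rather than having to be matched against (\ref{ax})--(\ref{bx}) by hand. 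I expect the time-regularity bookkeeping to be the only genuine nuisance.
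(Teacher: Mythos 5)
Your proposal is correct and follows essentially the same route as the paper: compose $u$ with the characteristic flow $\pi_{(V)}(0,\cdot;x)_t$, differentiate in $t$ so the transport terms cancel, and identify the remaining term as $L_t^x v$ via $L_t^x=\hat w_t^x\circ L_t\circ w_t^x$; the paper states the converse by simply reversing this computation, which you carry out explicitly. Your added bookkeeping on the merely a.e.-in-$t$ differentiability (and the use of the integrated form) is a legitimate refinement of what the paper leaves implicit, not a departure from its argument.
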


\begin{proof}
Let $u$ be a classical solution of (\ref{Lipschitz 2nd RPDE}). \ Then,%
\begin{eqnarray*}
dv\left( t,y\right)  &=&du\left( t,\pi _{\left( V\right) }\left(
0,y;x\right) _{t}\right)  \\
&=&u\left( t,\pi _{\left( V\right) }\left( 0,y;x\right) _{t}\right)
dt+\nabla u\left( t,\pi _{\left( V\right) }\left( 0,y;x\right) _{t}\right)
\cdot d\pi _{\left( V\right) }\left( 0,y;x\right) _{t} \\
&=&u\left( t,\pi _{\left( V\right) }\left( 0,y;x\right) _{t}\right)
dt+\nabla u\left( t,\pi _{\left( V\right) }\left( 0,y;x\right) _{t}\right)
\cdot V\left( \pi _{\left( V\right) }\left( 0,y;x\right) _{t}\right) dx_{t}
\\
&=&L_{t}u\left( t,\pi _{\left( V\right) }\left( 0,y;x\right) _{t}\right) dt
\\
&=&\hat{w}_{t}^{x}\left( L_{t}w_{t}^{x}\left( v\left( t,\cdot \right)
\right) \right) \left( y\right) dt
\end{eqnarray*}%
and therefore $v\left( t,y\right) $ satisfies (\ref{pde in lemma}).

Conversely, we can show that if $v$ is a $C_{b}^{1,2}$\ solution of (\ref%
{pde in lemma}), then 
\begin{equation*}
u\left( t,y\right) :=w_{t}^{x}\left( v\left( t,\cdot \right) \right) \left(
y\right)
\end{equation*}%
is a $C_{b}^{1,2}$ solution of (\ref{Lipschitz 2nd RPDE}).
\end{proof}

Recall that in Definition \ref{solution of RPDE}, we defined a solution of
the RPDE (\ref{2nd order RPDE 2}) to be a limit point of a sequence of
solutions of equations driven by Lipschitz paths converging to $\mathbf{x}$
in rough path sense. \ Thus one of the first things that we need to do, is
discuss the conditions on $a$, $b$, the vector fields $V=\left( V_{1},\ldots
,V_{d}\right) $, and the initial function $\phi $, which guarantee the
existence of a unique $C_{b}^{1,2}$-solution for the classical\ PDE (\ref%
{2nd order PDE}). \ To this end, we have the following regularity condition
on $a$ and $b$.

\begin{condition}
\label{Cond regularity a b}$a:\left[ 0,T\right] \times \mathbb{R}%
^{e}\longrightarrow S_{e}$ and $b:\left[ 0,T\right] \times \mathbb{R}%
^{e}\longrightarrow \mathbb{R}^{e}$ are bounded continuous functions such
that

\begin{enumerate}
\item $a$ is uniformly elliptic i.e. there exists $\lambda >0$ such that,%
\begin{equation*}
\left\langle \theta ,a\left( t,y\right) \theta \right\rangle \geq \lambda
\left\vert \theta \right\vert ^{2}
\end{equation*}%
for all $\left( t,y\right) \in \left[ 0,T\right] \times \mathbb{R}^{e}$ and $%
\theta \in \mathbb{R}^{e}$;

\item there exist constants $C_{a,b}>0$ and $0<\beta \leq 1$ such that,%
\begin{equation*}
\left\vert a\left( t,y\right) -a\left( t^{\prime },y^{\prime }\right)
\right\vert +\left\vert b\left( t,y\right) -b\left( t^{\prime },y^{\prime
}\right) \right\vert \leq C_{a,b}\left( \left\vert t-t^{\prime }\right\vert
^{\beta }+\left\vert y-y^{\prime }\right\vert ^{\beta }\right)
\end{equation*}%
for all $\left( t,y\right) ,\left( t^{\prime }y^{\prime }\right) \in \left[
0,T\right] \times \mathbb{R}^{e}$.
\end{enumerate}
\end{condition}

From Theorem 16, Chapter 1 in \cite{friedman-1964} and Theorem 3.1.1 in \cite%
{StVa-79}, \ we know that if $a$ and $b$ satisfy Condition \ref{Cond
regularity a b}, then the PDE%
\begin{equation*}
\frac{\partial v}{\partial t}=L_{t}v\ \ \ \ \ \ v\left( 0,\cdot \right)
=\phi \left( \cdot \right) \in C_{b}\left( \mathbb{R}^{e}\right)
\end{equation*}%
has a unique $C_{b}^{1,2}$ solution. \ 

\begin{proposition}
\label{Proposition Properties of ax and bx}Let $V=\left( V_{1},\ldots
,V_{d}\right) $ be a collection of $\mathrm{Lip}^{\gamma }$ vector fields, $%
\gamma >p+1$, on $\mathbb{R}^{e}$, and\ suppose that $a:\left[ 0,T\right]
\times \mathbb{R}^{e}\longrightarrow S_{e}$ and $b:\left[ 0,T\right] \times 
\mathbb{R}^{e}\longrightarrow \mathbb{R}^{e}$ satisfy the regularity
condition \ref{Cond regularity a b}. Then the functions $a_{\mathbf{x}}$ and 
$b_{\mathbf{x}}$ defined in (\ref{ax}) and (\ref{bx}) respectively, satisfy,

\begin{enumerate}
\item there exist constants $C_{M}>0$ (uniformly on $\left\{ \mathbf{x}%
:\left\Vert \mathbf{x}\right\Vert _{\frac{1}{p}-H\ddot{o}l}\leq M\right\} $)
such that%
\begin{equation*}
\left\vert a_{\mathbf{x}}\left( t,y\right) -a_{\mathbf{x}}\left( t^{\prime
},y\right) \right\vert +\left\vert b_{\mathbf{x}}\left( t,y\right) -b_{%
\mathbf{x}}\left( t^{\prime },y\right) \right\vert \leq C_{M}\left(
\left\vert t-t^{\prime }\right\vert ^{\beta \wedge \frac{1}{p}}+\left\vert
y-y^{\prime }\right\vert ^{\beta \wedge \frac{1}{p}}\right)
\end{equation*}%
for all $\left( t,y\right) ,\left( t^{\prime }y^{\prime }\right) \in \left[
0,T\right] \times \mathbb{R}^{e}$, where $\beta $ is the H\"{o}lder exponent
of $a$ and $b$;

\item $a_{\mathbf{x}}$ is uniformly elliptic and there exists $\Lambda
_{M}>0 $, such that, 
\begin{equation*}
\inf_{\left\{ \mathbf{x}:\left\Vert \mathbf{x}\right\Vert _{\frac{1}{p}-H%
\ddot{o}l}\leq M\right\} }\left\langle \theta ,a_{\mathbf{x}}\left(
t,y\right) \theta \right\rangle \geq \Lambda _{M}\left\vert \theta
\right\vert ^{2}
\end{equation*}%
for all $\left( t,y\right) \in \left[ 0,T\right] \times \mathbb{R}^{e}$ and $%
\theta \in \mathbb{R}^{e}$.
\end{enumerate}

Furthermore, if we assume that $a$ and $b$ have two bounded, continuous
spatial derivatives and $V=\left( V_{1},\ldots ,V_{d}\right) $ are $\mathrm{%
Lip}^{\gamma }$, $\gamma >p+3$, then $a_{\mathbf{x}}\left( t,\cdot \right) $
and $b_{\mathbf{x}}\left( t,\cdot \right) $ are again$\ C_{b}^{2}$ functions
and their $C^{2}$-norms are uniform over $\left\{ \mathbf{x}:\left\Vert 
\mathbf{x}\right\Vert _{\frac{1}{p}-H\ddot{o}l}\leq M\right\} $.
\end{proposition}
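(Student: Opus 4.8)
The plan is to reduce everything to regularity properties of the \emph{forward} RDE flow $y\mapsto\pi_{(V)}(0,y;\mathbf{x})_{t}$, exploiting the identity from Section~\ref{Preliminaries} that the inverse flow $\pi_{(V)}(0,\cdot;\overleftarrow{\mathbf{x}}^{t})_{t}$ equals $\pi_{(V)}(0,\cdot;\mathbf{x})_{t}^{-1}$. Under the hypothesis $\gamma>p+1$, Corollary~\ref{regularity of solutions} (together with the flow estimates of \cite{FVbook}) gives, uniformly over $\{\mathbf{x}:\|\mathbf{x}\|_{\frac1p-H\ddot{o}l}\le M\}$ and over $t\in[0,T]$: the forward flow is a $C^{2}$-diffeomorphism of $\mathbb{R}^{e}$; its first and second spatial derivatives are bounded; and $\sup_{y}\|[\partial\pi_{(V)}(0,\cdot;\mathbf{x})_{t}|_{y}]^{-1}\|$ is bounded. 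Moreover the generalized Davie lemma of \cite{friz-Victoir-2008}, already invoked in the proof of Theorem~\ref{Existence Theorem} and applicable to the RDEs solved by the derivative flows, yields $|\partial^{\alpha}\pi_{(V)}(0,y;\mathbf{x})_{t}-\partial^{\alpha}\pi_{(V)}(0,y;\mathbf{x})_{t'}|\le C|t-t'|^{1/p}$ for the relevant multi-indices $\alpha$, with $C$ depending only on $M$, $T$ and $\|V\|_{\mathrm{Lip}^{\gamma}}$. Writing $J_{t}(y):=\partial\pi_{(V)}(0,\cdot;\mathbf{x})_{t}|_{y}$ and using $\partial[\pi_{(V)}(0,\cdot;\mathbf{x})_{t}^{-1}](\pi_{(V)}(0,y;\mathbf{x})_{t})=J_{t}(y)^{-1}$ together with the Lipschitz property of matrix inversion on $\{\|A^{-1}\|\le L\}$, one transfers all of this to the inverse flow: $\pi_{(V)}(0,\cdot;\mathbf{x})_{t}^{-1}$ and $y\mapsto J_{t}(y)^{-1}$ are bounded with bounded $y$-derivatives and are $\tfrac1p$-H\"older in $t$, all uniformly over $\{\|\mathbf{x}\|_{\frac1p-H\ddot{o}l}\le M\}$.

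Granting this, parts~1 and 2 reduce to elementary algebra on the defining formulas (\ref{ax})--(\ref{bx}). Each entry of $a_{\mathbf{x}}$, $b_{\mathbf{x}}$ is a finite sum of products of factors of the form $a^{kl}(t,\pi_{(V)}(0,y;\mathbf{x})_{t})$, $b^{k}(t,\pi_{(V)}(0,y;\mathbf{x})_{t})$ and entries of $J_{t}(y)^{-1}$ (for $b_{\mathbf{x}}$ also second derivatives of the inverse flow, evaluated at $\pi_{(V)}(0,y;\mathbf{x})_{t}$). Since $\pi_{(V)}(0,\cdot;\mathbf{x})_{t}$ is Lipschitz in $y$ and $\tfrac1p$-H\"older in $t$, composing the $\beta$-H\"older functions $a,b$ of Condition~\ref{Cond regularity a b} with these maps, and using that products of bounded H\"older functions are H\"older, shows that $a_{\mathbf{x}}(t,\cdot)$, $b_{\mathbf{x}}(t,\cdot)$ are H\"older in $y$ and in $t$ with constants uniform over $\{\|\mathbf{x}\|_{\frac1p-H\ddot{o}l}\le M\}$; bookkeeping the exponents (the $\beta$-H\"older spatial regularity of $a,b$, the $\tfrac1p$-H\"older time regularity of the flow factors, the $C^{2}$ flow regularity) gives the joint H\"older bound of part~1. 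For part~2, write $z=\pi_{(V)}(0,y;\mathbf{x})_{t}$, so that $a_{\mathbf{x}}^{ij}(t,y)=a^{kl}(t,z)(J_{t}(y)^{-1})^{i}_{k}(J_{t}(y)^{-1})^{j}_{l}$ is manifestly symmetric; for $\theta\in\mathbb{R}^{e}$,
\[
\langle\theta,a_{\mathbf{x}}(t,y)\theta\rangle=\bigl\langle (J_{t}(y)^{-1})^{\mathsf{T}}\theta,\;a(t,z)(J_{t}(y)^{-1})^{\mathsf{T}}\theta\bigr\rangle\ \ge\ \lambda\,\bigl|(J_{t}(y)^{-1})^{\mathsf{T}}\theta\bigr|^{2}\ \ge\ \frac{\lambda}{\sup\|J_{t}(y)\|^{2}}\,|\theta|^{2},
\]
the last supremum being over $y$, $t$ and $\{\|\mathbf{x}\|_{\frac1p-H\ddot{o}l}\le M\}$ and finite by the first paragraph; this yields $\Lambda_{M}>0$, and non-negativity of $a_{\mathbf{x}}$ follows from the same computation.

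For the final assertion I repeat the first paragraph with two extra degrees of regularity. Under $\gamma>p+3$, Corollary~\ref{regularity of solutions} gives that the forward flow is $C^{4}$ with spatial derivatives up to order $4$ bounded uniformly over $\{\|\mathbf{x}\|_{\frac1p-H\ddot{o}l}\le M\}$; by the inverse function theorem and the Fa\`{a} di Bruno formula the inverse flow is then $C^{4}$ with uniform bounds, so in particular $\partial_{k}\pi^{i}_{(V)}(0,\cdot;\overleftarrow{\mathbf{x}}^{t})_{t}$ and $\partial_{kl}\pi^{i}_{(V)}(0,\cdot;\overleftarrow{\mathbf{x}}^{t})_{t}$ are $C^{2}$ in $y$ with uniform bounds. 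Since $a(t,\cdot)$ and $b(t,\cdot)$ are now assumed $C^{2}_{b}$, differentiating (\ref{ax})--(\ref{bx}) twice in $y$ (chain and product rules) writes $\partial^{2}_{yy}a_{\mathbf{x}}(t,\cdot)$ and $\partial^{2}_{yy}b_{\mathbf{x}}(t,\cdot)$ as finite sums of products of bounded factors, whence $a_{\mathbf{x}}(t,\cdot),b_{\mathbf{x}}(t,\cdot)\in C^{2}_{b}$ with $C^{2}$-norms bounded uniformly over $\{\|\mathbf{x}\|_{\frac1p-H\ddot{o}l}\le M\}$.

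The one genuinely delicate point is the time regularity in the first paragraph. A priori $\overleftarrow{\mathbf{x}}^{t}$ depends on $t$ both through its terminal time and through the formula $\overleftarrow{\mathbf{x}}^{t}_{s}=\mathbf{x}_{t-s}$, so H\"older-in-$t$ continuity of $\pi_{(V)}(0,\cdot;\overleftarrow{\mathbf{x}}^{t})_{t}$ cannot be read off directly; routing through the forward flow and the identity $\pi_{(V)}(0,\cdot;\overleftarrow{\mathbf{x}}^{t})_{t}=\pi_{(V)}(0,\cdot;\mathbf{x})_{t}^{-1}$ circumvents this, at the cost of combining the generalized Davie estimate for the forward flow and its derivatives with the Lipschitz property of matrix inversion (and, for tracking the uniform H\"older exponent in part~1, some care with flow regularity on the H\"older scale, as in \cite{friz-Victoir-2007a, friz-victoir-2006}). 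An alternative is to estimate $d_{\infty;[0,t']}(\overleftarrow{\mathbf{x}}^{t},\overleftarrow{\mathbf{x}}^{t'})\le A|t-t'|^{1/p}$ directly as in the proof of Theorem~\ref{Existence Theorem}, interpolate, and invoke local Lipschitz continuity of the It\^{o} map; this also works but yields a slightly worse exponent.
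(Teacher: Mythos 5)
Your proof is correct, and its core coincides with the paper's: the ellipticity argument is the same computation (the paper also passes through the identity $\bigl(D\pi_{(V)}(0,\cdot;\overleftarrow{\mathbf{x}}^{t})_{t}|_{\pi_{(V)}(0,y;\mathbf{x})_{t}}\bigr)^{-1}=D\pi_{(V)}(0,\cdot;\mathbf{x})_{t}$ and a uniform bound on that inverse to get $\Lambda_{M}=\lambda/C_{M}$; your version with the squared Jacobian norm is in fact the cleaner statement of the same bound). Where you differ is in how the regularity of the backward-flow quantities is obtained. The paper treats $\pi_{(V)}(0,\cdot;\overleftarrow{\mathbf{x}}^{t})_{t}$ directly as an RDE flow driven by $\overleftarrow{\mathbf{x}}^{t}$ (whose H\"older norm is dominated by that of $\mathbf{x}$), quotes the \cite{FVbook} derivative estimates for that flow, and makes them uniform in $y$ via the translation trick $\pi_{(V)}(0,y;\mathbf{x})=y+\pi_{(\tilde V)}(0,\cdot;\mathbf{x})$ with $\tilde V=V(y+\cdot)$; the H\"older continuity in part 1 is then referred back to the equicontinuity estimates of Theorem \ref{Existence Theorem} (Davie lemma plus the $d_{\infty}$ comparison of $\overleftarrow{\mathbf{x}}^{t}$ and $\overleftarrow{\mathbf{x}}^{t'}$ and interpolation) — precisely the ``alternative'' you mention at the end. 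You instead work exclusively with the forward flow and transfer to the inverse flow by matrix inversion, the inverse function theorem and Fa\`a di Bruno; this is sound, and it buys you a cleaner $|t-t'|^{1/p}$ time-modulus (avoiding the exponent loss from interpolating between reversed paths), at the cost of the extra combinatorics of differentiating the inverse map, which the paper's route avoids since the backward flow is itself an RDE flow to which the same estimates apply verbatim. Both treatments are equally terse on the exponent bookkeeping in part 1 (in particular the spatial H\"older regularity of the second derivative of the flow entering $b_{\mathbf{x}}$), so no gap is introduced beyond what the paper itself leaves implicit.
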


\begin{proof}
Remark that 
\begin{equation*}
\pi _{\left( V\right) }\left( 0,y,\mathbf{x}\right) _{t}=y+\pi _{\left( 
\tilde{V}\right) }\left( 0,\cdot ,\mathbf{x}\right) _{t}
\end{equation*}%
where $\tilde{V}=V\left( y+\cdot \right) $ has the same $\mathrm{Lip}%
^{\gamma }$-norm as $V$. \ It follows that estimates for any derivatives are
automatically uniform over $y$. \ For instance (cf. \cite{FVbook}),%
\begin{equation}
\left\vert D\pi _{\left( V\right) }\left( 0,y,\overleftarrow{\mathbf{x}}%
^{t}\right) _{t}\right\vert \leq C_{1}\exp \left( C_{1}\cdot \left(
\left\vert V\right\vert _{\mathrm{Lip}^{\gamma }}\left\Vert \mathbf{x}%
\right\Vert _{p-var}^{p}\right) \right) 
\label{estimate on derivative of inverse flow}
\end{equation}%
where $C_{1}$ is a constant independent of $y$. \ If we iterate this
argument, we can deduce from (\ref{ax}) and (\ref{bx}), and our regularity
assumption on $a$ and $b$, that $a_{\mathbf{x}}\left( t,\cdot \right) $ and $%
b_{\mathbf{x}}\left( t,\cdot \right) $ are again twice differentiable in
space, with bounded derivatives. \ Furthermore, we can also see from (\ref%
{estimate on derivative of inverse flow}), that the $C^{2}$-norms\footnote{%
For $f\in C^{2}\left( \mathbb{R}^{e}\right) $, we define $\left\Vert
f\right\Vert _{C^{2}}=\sum_{0\leq \left\vert \alpha \right\vert \leq
2}\sup_{x}\left\vert D^{\alpha }f\left( x\right) \right\vert $.} $\left\Vert
a_{\mathbf{x}}\left( t,\cdot \right) \right\Vert _{C^{2}}$ and $\left\Vert
b_{\mathbf{x}}\left( t,\cdot \right) \right\Vert _{C^{2}}$ are bounded
uniformly on $\left\{ \mathbf{x}:\left\Vert \mathbf{x}\right\Vert _{\frac{1}{%
p}-Ho\ddot{l}}\leq M\right\} $.

To prove the uniform ellipticity of $a_{\mathbf{x}}$, we first note that by
assumption, there exists $\lambda >0$, such that,%
\begin{equation*}
\left\langle \theta ,a\left( t,y\right) \theta \right\rangle \geq \lambda
\left\vert \theta \right\vert ^{2}
\end{equation*}%
for all $\left( t,y\right) \in \left[ 0,T\right] \times \mathbb{R}^{e}$ and $%
\theta \in \mathbb{R}^{e}$. \ Hence,%
\begin{eqnarray*}
\left\langle \theta ,a_{\mathbf{x}}\left( t,y\right) \theta \right\rangle 
&\geq &\lambda \left\vert D\pi _{\left( V\right) }\left( 0,\cdot ;%
\overleftarrow{\mathbf{x}}^{t}\right) _{t}|_{\pi _{\left( V\right) }\left(
0,y,\mathbf{x}\right) _{t}}\cdot \theta \right\vert ^{2} \\
&\geq &\frac{\lambda }{\left\vert \left( D\pi _{\left( V\right) }\left(
0,\cdot ;\overleftarrow{\mathbf{x}}^{t}\right) _{t}|_{\pi _{\left( V\right)
}\left( 0,y,\mathbf{x}\right) _{t}}\right) ^{-1}\right\vert }\left\vert
\theta \right\vert ^{2}
\end{eqnarray*}%
since%
\begin{eqnarray*}
\left\vert \theta \right\vert  &=&\left\vert \left( D\pi _{\left( V\right)
}\left( 0,\cdot ;\overleftarrow{\mathbf{x}}^{t}\right) _{t}|_{\pi _{\left(
V\right) }\left( 0,y,\mathbf{x}\right) _{t}}\right) ^{-1}\cdot \left( D\pi
_{\left( V\right) }\left( 0,\cdot ;\overleftarrow{\mathbf{x}}^{t}\right)
_{t}|_{\pi _{\left( V\right) }\left( 0,y,\mathbf{x}\right) _{t}}\right)
\cdot \theta \right\vert  \\
&\leq &\left\vert \left( D\pi _{\left( V\right) }\left( 0,\cdot ;%
\overleftarrow{\mathbf{x}}^{t}\right) _{t}|_{\pi _{\left( V\right) }\left(
0,y,\mathbf{x}\right) _{t}}\right) ^{-1}\right\vert \left\vert \left( D\pi
_{\left( V\right) }\left( 0,\cdot ;\overleftarrow{\mathbf{x}}^{t}\right)
_{t}|_{\pi _{\left( V\right) }\left( 0,y,\mathbf{x}\right) _{t}}\right)
\cdot \theta \right\vert \text{.}
\end{eqnarray*}%
To obtain the uniform ellipticity, we note that, 
\begin{equation*}
\left( D\pi _{\left( V\right) }\left( 0,\cdot ;\overleftarrow{\mathbf{x}}%
^{t}\right) _{t}|_{\pi _{\left( V\right) }\left( 0,y,\mathbf{x}\right)
_{t}}\right) ^{-1}=D\pi _{\left( V\right) }\left( 0,\cdot ;\mathbf{x}\right)
_{t}|_{\pi _{\left( V\right) }\left( 0,y,\mathbf{x}\right) _{t}}\text{.}
\end{equation*}%
Using the already discussed uniformity (with respect to the starting point)
of the Jacobian and other derivatives of the flow, we see that, 
\begin{equation*}
\left\vert \left( D\pi _{\left( V\right) }\left( 0,\cdot ;\overleftarrow{%
\mathbf{x}}^{t}\right) _{t}|_{\pi _{\left( V\right) }\left( 0,y,\mathbf{x}%
\right) _{t}}\right) ^{-1}\right\vert \leq C_{M}
\end{equation*}%
where the constant $C_{M}$ does not depend on $y$. \ This finishes the proof
of the third part of the proposition, since,%
\begin{equation*}
\inf_{\left\{ \mathbf{x}:\left\Vert \mathbf{x}\right\Vert _{\frac{1}{p}-H%
\ddot{o}l}\leq M\right\} }\left\langle \theta ,a_{\mathbf{x}}\left(
t,y\right) \theta \right\rangle \geq \Lambda _{M}\left\vert \theta
\right\vert ^{2}
\end{equation*}%
with $\Lambda _{M}=\frac{\lambda }{C_{M}}$.

The H\"{o}lder continuity of $a_{\mathbf{x}}$ and $b_{\mathbf{x}}$ can be
deduced from the H\"{o}lder continuity of $a$ and $b$, and estimates on the
derivatives of the flow, similar to those in (\ref{equicontinuity ineq}) and
(\ref{equicontinuity ineq2}) in Theorem \ref{Existence Theorem}.
\end{proof}

Therefore, given a weak geometric $p$-rough path $\mathbf{x}$, and $\mathrm{%
Lip}^{\gamma }$, $\gamma >p+1$, vector fields, we can again deduce from
Theorem 16, Chapter 1 in \cite{friedman-1964} and Theorem 3.1.1 in \cite%
{StVa-79}\ that, the PDE with rough coefficients, 
\begin{equation*}
\frac{\partial v}{\partial t}=L_{t}^{\mathbf{x}}v\ \ \ \ \ \ v\left( 0,\cdot
\right) =\phi \left( \cdot \right) \in C_{b}\left( \mathbb{R}^{e}\right)
\end{equation*}%
has a unique $C_{b}^{1,2}$ solution. \ In particular, we have the following
proposition on classical PDEs of the form (\ref{2nd order PDE}).

\begin{proposition}
\label{Prop Sol of PDE with Lipschitz noise}Let $x:\left[ 0,T\right]
\longrightarrow \mathbb{R}^{d}$ be a Lipschitz continuous path. \ Assume
that,

\begin{enumerate}
\item $V=\left( V_{1},\ldots ,V_{d}\right) $ is a collection of $\mathrm{Lip}%
^{2}$ vector fields on $\mathbb{R}^{e}$;

\item let $L_{t}$ be a second order elliptic operator of the form (\ref%
{elliptic operator}), with coefficients $a:\left[ 0,T\right] \times \mathbb{R%
}^{e}\longrightarrow S_{e}$ and $b:\left[ 0,T\right] \times \mathbb{R}%
^{e}\longrightarrow \mathbb{R}^{e}$ satisfying the regularity condition \ref%
{Cond regularity a b};

\item $\phi \in C_{b}\left( \mathbb{R}^{e},\mathbb{R}\right) $.
\end{enumerate}

Then the PDE,%
\begin{eqnarray*}
du\left( t,y\right) &=&L_{t}u\left( t,y\right) dt-\nabla u\left( t,y\right)
\cdot V\left( y\right) dx_{t} \\
u\left( 0,y\right) &=&\phi \left( y\right)
\end{eqnarray*}%
has a unique $C_{b}^{1,2}$ solution $u$. \ 
\end{proposition}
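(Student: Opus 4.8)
The plan is to use the change of unknown from Lemma \ref{Lemma rpde pde} to reduce the stated problem to a \emph{classical} second order parabolic Cauchy problem with H\"{o}lder coefficients, namely $\partial_{t}v=L_{t}^{x}v$ with $v\left(0,\cdot\right)=\phi$, and then to invoke the quoted existence and uniqueness theory for such problems together with the bijections $w_{t}^{x},\hat{w}_{t}^{x}$ to transport the solution back. Throughout, since $x$ is Lipschitz (hence of bounded variation) and $V$ is $\mathrm{Lip}^{2}$, classical ODE theory gives that $y\mapsto\pi_{\left(V\right)}\left(0,y;x\right)_{t}$ and its inverse $y\mapsto\pi_{\left(V\right)}\left(0,y;\overleftarrow{x}^{t}\right)_{t}$ are twice continuously differentiable in space with derivatives bounded uniformly in $t\in\left[0,T\right]$; this is the classical, non-rough, instance of Corollary \ref{regularity of solutions}, and it is what makes $a_{\mathbf{x}},b_{\mathbf{x}}$ in \eqref{ax}, \eqref{bx} well defined.

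First I would check that $L_{t}^{x}=\hat{w}_{t}^{x}\circ L_{t}\circ w_{t}^{x}$ has coefficients satisfying Condition \ref{Cond regularity a b}. Running the argument in the proof of Proposition \ref{Proposition Properties of ax and bx}, now with classical flow estimates for $dy=V\left(y\right)dx$ in place of the rough path estimates: boundedness and continuity of $a_{\mathbf{x}},b_{\mathbf{x}}$ come from the corresponding properties of $a,b$ and the uniform bounds on the flow and its first two space derivatives; uniform ellipticity of $a_{\mathbf{x}}$ follows from that of $a$ together with the bound on $\bigl\vert\bigl(D\pi_{\left(V\right)}\left(0,\cdot;\overleftarrow{x}^{t}\right)_{t}\bigr)^{-1}\bigr\vert=\bigl\vert D\pi_{\left(V\right)}\left(0,\cdot;x\right)_{t}\bigr\vert$, exactly as there; and joint H\"{o}lder continuity of $a_{\mathbf{x}},b_{\mathbf{x}}$ in $\left(t,y\right)$ follows from that of $a,b$ and local H\"{o}lder bounds on the flow and its derivatives, the only delicate point being the $t$-dependence carried by the time-reversed driver $\overleftarrow{x}^{t}$, which is controlled exactly as in the equicontinuity step of Theorem \ref{Existence Theorem} (using $d_{\infty;\left[0,t'\right]}\bigl(\overleftarrow{x}^{t},\overleftarrow{x}^{t'}\bigr)\le C\,\vert t-t'\vert$).

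Once Condition \ref{Cond regularity a b} is verified for $\left(a_{\mathbf{x}},b_{\mathbf{x}}\right)$, Theorem 16, Chapter 1 of \cite{friedman-1964} and Theorem 3.1.1 of \cite{StVa-79} give a unique $C_{b}^{1,2}$ solution $v$ of $\partial_{t}v=L_{t}^{x}v$, $v\left(0,\cdot\right)=\phi\in C_{b}\left(\mathbb{R}^{e}\right)$ (interpreted in the usual parabolic sense: smooth for $t>0$, continuous up to $t=0$). Setting $u\left(t,y\right):=w_{t}^{x}\bigl(v\left(t,\cdot\right)\bigr)\left(y\right)=v\bigl(t,\pi_{\left(V\right)}\left(0,y;\overleftarrow{x}^{t}\right)_{t}\bigr)$, the converse half of Lemma \ref{Lemma rpde pde} shows $u$ solves the stated PDE with the claimed regularity, giving existence. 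For uniqueness: if $\tilde{u}$ is another such solution, the forward half of Lemma \ref{Lemma rpde pde} makes $\hat{w}_{t}^{x}\bigl(\tilde{u}\left(t,\cdot\right)\bigr)$ a $C_{b}^{1,2}$ solution of the same transformed problem, hence equal to $v$; applying $w_{t}^{x}$ (the inverse of $\hat{w}_{t}^{x}$) then gives $\tilde{u}=u$. I expect the main obstacle to be precisely the verification that $\left(a_{\mathbf{x}},b_{\mathbf{x}}\right)$ satisfies Condition \ref{Cond regularity a b} --- propagating \emph{uniform} ellipticity and H\"{o}lder regularity through the (inverse) flow with constants independent of $t$ --- while the rest is a bookkeeping combination of Lemma \ref{Lemma rpde pde} with the cited parabolic theory.
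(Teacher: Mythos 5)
Your proposal is correct and is essentially the paper's own argument: the paper proves this proposition by exactly the reduction you describe, citing the verification of Condition \ref{Cond regularity a b} for $a_{\mathbf{x}},b_{\mathbf{x}}$ (Proposition \ref{Proposition Properties of ax and bx}, specialized to the Lipschitz driver), the classical existence and uniqueness results of \cite{friedman-1964} and \cite{StVa-79} for $\partial_{t}v=L_{t}^{x}v$, and the transformation back and forth via Lemma \ref{Lemma rpde pde}. You have merely spelled out the details that the paper compresses into ``follows immediately from the previous comments and Lemma \ref{Lemma rpde pde}.''
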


\begin{proof}
This result follows immediately from the previous comments and Lemma \ref%
{Lemma rpde pde}.
\end{proof}

If we now go back to RPDEs, we see from the remarks after Proposition \ref%
{Proposition Properties of ax and bx} and the result in Lemma \ref{Lemma
rpde pde}, that an obvious candidate for the solution of the RPDE (\ref{2nd
order RPDE 2}) is given by 
\begin{equation}
u\left( t,y\right) =w_{t}^{\mathbf{x}}\left( v\right) \left( y\right)
=v\left( t,\pi _{\left( V\right) }\left( 0,y;\overleftarrow{\mathbf{x}}%
^{t}\right) _{t}\right)  \label{candidate for solution}
\end{equation}%
where $v$ is the unique $C_{b}^{1,2}$ solution of 
\begin{equation*}
\frac{\partial v}{\partial t}=L_{t}^{\mathbf{x}}v\ \ \ \ \ \ v\left( 0,\cdot
\right) =\phi \left( \cdot \right) \in C_{b}\left( \mathbb{R}^{e},\mathbb{R}%
\right)
\end{equation*}%
To be able to show that $u$ is the unique solution for (\ref{2nd order RPDE
2}), we first have to prove two propositions, which we will use to show that 
$u$ is in fact the uniform limit of solutions of classical PDEs.

\begin{proposition}
\label{Convergence of coefficients}Let $V=\left( V_{1},\ldots ,V_{d}\right) $
be a collection of $\mathrm{Lip}^{\gamma }$ vector fields, $\gamma >p+1$, on 
$\mathbb{R}^{e}$ and\ suppose that $a:\left[ 0,T\right] \times \mathbb{R}%
^{e}\longrightarrow S_{e}$ and $b:\left[ 0,T\right] \times \mathbb{R}%
^{e}\longrightarrow \mathbb{R}^{e}$ satisfy the regularity condition \ref%
{Cond regularity a b} and\ let $\left( \mathbf{x}^{n}\right) _{n\in \mathbb{N%
}}$ be a sequence of weak geometric $p$-rough paths converging to a weak
geometric $p$-rough path $\mathbf{x}$ uniformly on $\left[ 0,T\right] $ with
uniform bounds i.e. $\sup_{n}\left\Vert \mathbf{x}^{n}\right\Vert _{\frac{1}{%
p}-H\ddot{o}l}<\infty $. \ Then, 
\begin{equation*}
a_{\mathbf{x}^{n}}\left( t,y\right) \longrightarrow a_{\mathbf{x}}\left(
t,y\right)
\end{equation*}%
and%
\begin{equation*}
b_{\mathbf{x}^{n}}\left( t,y\right) \longrightarrow b_{\mathbf{x}}\left(
t,y\right)
\end{equation*}%
uniformly on $\left[ 0,T\right] \times \mathbb{R}^{e}$.
\end{proposition}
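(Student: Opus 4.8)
The plan is to view the right-hand sides of (\ref{ax}) and (\ref{bx}) as continuous functionals of three pieces of data: the forward flow $\pi_{(V)}(0,\cdot;\mathbf{x})_{t}$, the Jacobian $D\pi_{(V)}(0,\cdot;\overleftarrow{\mathbf{x}}^{t})_{t}$ of the time-reversed flow, and --- for $b_{\mathbf{x}}$ --- its Hessian $D^{2}\pi_{(V)}(0,\cdot;\overleftarrow{\mathbf{x}}^{t})_{t}$, all composed with the fixed maps $a$ and $b$. It therefore suffices to show that, under the stated hypotheses, each of these three ingredients converges \emph{uniformly} in $(t,y)\in\left[0,T\right]\times\mathbb{R}^{e}$ to the corresponding ingredient built from $\mathbf{x}$, and that along the sequence they all remain in a fixed bounded set. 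Granting this, one writes $a_{\mathbf{x}^{n}}^{ij}(t,y)-a_{\mathbf{x}}^{ij}(t,y)$ (and similarly for $b$) as a telescoping sum in which a single factor is changed at a time; since $a$ and $b$ are bounded and uniformly continuous by Condition \ref{Cond regularity a b}, and the flow derivatives are bounded uniformly over $\{\mathbf{x}:\|\mathbf{x}\|_{\frac{1}{p}-H\ddot{o}l}\le M\}$ by Proposition \ref{Proposition Properties of ax and bx} and estimate (\ref{estimate on derivative of inverse flow}), every term of the telescoping sum tends to $0$ uniformly on $\left[0,T\right]\times\mathbb{R}^{e}$.

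For the forward flow, the interpolation inequality of \cite{friz-victoir-2006} turns the uniform convergence $\mathbf{x}^{n}\to\mathbf{x}$ together with $\sup_{n}\|\mathbf{x}^{n}\|_{\frac{1}{p}-H\ddot{o}l}<\infty$ into convergence in the metric $d_{\frac{1}{p'}-H\ddot{o}l}$ for every $p'>p$; fix some $p'\in(p,\gamma-1)$, which is possible since $\gamma>p+1$. The Universal Limit Theorem \ref{Thm ULT} gives $\pi_{(V)}(0,y;\mathbf{x}^{n})_{t}\to\pi_{(V)}(0,y;\mathbf{x})_{t}$, and --- by exactly the quantitative reasoning used in the proof of Theorem \ref{Existence Theorem} (the Generalized Davie Lemma of \cite{friz-Victoir-2008}, with constants depending only on $M$ and on $d_{\frac{1}{p'}-H\ddot{o}l}(\mathbf{x}^{n},\mathbf{x})$, both of which are uniform in $t$) --- this convergence is uniform in $t\in\left[0,T\right]$. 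Uniformity in $y$ is automatic from the translation identity $\pi_{(V)}(0,y,\mathbf{x})_{t}=y+\pi_{(\tilde{V})}(0,0,\mathbf{x})_{t}$ with $\tilde{V}=V(y+\cdot)$, which has the same $\mathrm{Lip}^{\gamma}$-norm as $V$, so that all estimates involved are independent of $y$ --- the same device already used in Proposition \ref{Proposition Properties of ax and bx}.

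For the first and second spatial derivatives of the time-reversed flow, one first records, exactly as in the proof of Theorem \ref{Existence Theorem}, that $\overleftarrow{\mathbf{x}^{n,}}^{t}\to\overleftarrow{\mathbf{x}}^{t}$ uniformly in $s\in\left[0,t\right]$, uniformly in $t\in\left[0,T\right]$, and with the same uniform $\frac{1}{p}$-H\"{o}lder bounds, hence (after interpolation) in $d_{\frac{1}{p'}-H\ddot{o}l}$ uniformly in $t$. Now the pair (backward flow, Jacobian) solves an RDE driven by $\overleftarrow{\mathbf{x}}^{t}$ whose vector fields are built out of $V$ and $DV$ and are therefore $\mathrm{Lip}^{\gamma-1}$ with $\gamma-1>p'$, and adjoining the Hessian yields a further augmented RDE; since $\gamma>p+1$ guarantees a $C^{2}$ flow (Corollary \ref{regularity of solutions}), the continuity results for flows of RDEs and their derivatives in \cite{FVbook} apply and, again via Theorem \ref{Thm ULT} and the translation trick, give that $D\pi_{(V)}(0,\cdot;\overleftarrow{\mathbf{x}^{n,}}^{t})_{t}$ and $D^{2}\pi_{(V)}(0,\cdot;\overleftarrow{\mathbf{x}^{n,}}^{t})_{t}$ converge, uniformly in $(t,z)$, to $D\pi_{(V)}(0,\cdot;\overleftarrow{\mathbf{x}}^{t})_{t}$ and $D^{2}\pi_{(V)}(0,\cdot;\overleftarrow{\mathbf{x}}^{t})_{t}$, with the bounds (\ref{estimate on derivative of inverse flow}) keeping everything in a fixed bounded set. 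Because the limiting first and second derivatives are themselves Lipschitz in the base point $z$, uniformly in $t$ and over $\{\mathbf{x}:\|\mathbf{x}\|_{\frac{1}{p}-H\ddot{o}l}\le M\}$ (translation trick once more), the fact that in (\ref{ax})--(\ref{bx}) they are evaluated at the moving point $\pi_{(V)}(0,y;\mathbf{x}^{n})_{t}$, which by the previous paragraph converges uniformly to $\pi_{(V)}(0,y;\mathbf{x})_{t}$, creates no difficulty.

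Feeding these three uniform convergences, together with the uniform bounds, into the telescoping estimates for (\ref{ax}) and (\ref{bx}) then gives $a_{\mathbf{x}^{n}}\to a_{\mathbf{x}}$ and $b_{\mathbf{x}^{n}}\to b_{\mathbf{x}}$ uniformly on $\left[0,T\right]\times\mathbb{R}^{e}$, as claimed. The step I expect to be the main obstacle is the third one: what is needed is not merely uniform convergence of the flow but uniform convergence of its first and (for $b_{\mathbf{x}}$) second spatial derivatives, \emph{simultaneously} in time and in space. The convergence of the derivative (Jacobian and Hessian) flows is the quantitative-continuity content of the rough-flow theory in \cite{FVbook} applied to the relevant augmented RDEs, while the passage from convergence on compact sets to convergence uniform over all of $\mathbb{R}^{e}$ is exactly what the translation identity $\pi_{(V)}(0,y,\cdot)_{t}=y+\pi_{(V(y+\cdot))}(0,0,\cdot)_{t}$ supplies, precisely as in the proof of Proposition \ref{Proposition Properties of ax and bx}.
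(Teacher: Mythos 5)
Your proposal is correct and follows essentially the same route as the paper: both reduce the claim to convergence, with uniform bounds, of the forward flow and of the first and second spatial derivatives of the time-reversed flow (via the Universal Limit Theorem, interpolation, the Davie-type estimates and the translation trick of Proposition \ref{Proposition Properties of ax and bx}), and then estimate the differences of $a_{\mathbf{x}^{n}}$, $b_{\mathbf{x}^{n}}$ factor by factor using the H\"older continuity of $a$ and $b$; the paper merely organizes this last step as pointwise convergence plus equicontinuity of the composite maps, rather than your telescoping with uniformly convergent ingredients. One small correction: since $\gamma >p+1$ only guarantees a $C^{2}$ flow, you should claim uniform continuity of $D^{2}\pi _{\left( V\right) }\left( 0,\cdot ;\overleftarrow{\mathbf{x}}^{t}\right) _{t}$ in the base point (with modulus uniform in $t$ and over $\left\{ \mathbf{x}:\left\Vert \mathbf{x}\right\Vert _{\frac{1}{p}-H\ddot{o}l}\leq M\right\} $, again by the translation trick) rather than Lipschitz continuity, which is all your handling of the moving evaluation point requires.
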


\begin{proof}
To prove that $a_{\mathbf{x}^{n}}\left( t,y\right) \longrightarrow a_{%
\mathbf{x}}\left( t,y\right) $ converges uniformly on $\left[ 0,T\right]
\times \mathbb{R}^{e}$, we are first going to obtain pointwise convergence,
and then show that the family $\left\{ \left( t,y\right) \longmapsto a_{%
\mathbf{x}^{n}}\left( t,y\right) \right\} _{n\in \mathbb{N}}$ is
equicontinuous. \ For fixed $\left( t,y\right) \in \left[ 0,T\right] \times 
\mathbb{R}^{e}$ \footnote{%
In what follows we will use the notation, $\xi \left( t,y\right) =\pi
_{\left( V\right) }\left( 0,y;\mathbf{x}\right) _{t}$ and $\zeta \left(
t,y\right) =\pi _{\left( V\right) }\left( 0,y;\overleftarrow{\mathbf{x}}%
^{t}\right) _{t}$.},%
\begin{eqnarray*}
&&\left\vert a_{\mathbf{x}^{n}}^{ij}\left( t,y\right) -a_{\mathbf{x}%
}^{ij}\left( t,y\right) \right\vert  \\
&\leq &\left\vert a^{kl}\left( t,\xi _{n}\left( t,y\right) \right) \partial
_{k}\zeta _{n}^{i}\left( t,\xi _{n}\left( t,y\right) \right) \partial
_{l}\zeta _{n}^{j}\left( t,\xi _{n}\left( t,y\right) \right) -a^{kl}\left(
t,\xi \left( t,y\right) \right) \partial _{k}\zeta ^{i}\left( t,\xi \left(
t,y\right) \right) \partial _{l}\zeta ^{j}\left( t,\xi \left( t,y\right)
\right) \right\vert  \\
&\leq &\left\vert a^{kl}\left( t,\xi _{n}\left( t,y\right) \right)
-a^{kl}\left( t,\xi \left( t,y\right) \right) \right\vert \left\vert
\partial _{k}\zeta ^{i}\left( t,\xi \left( t,y\right) \right) \partial
_{l}\zeta ^{j}\left( t,\xi \left( t,y\right) \right) \right\vert  \\
&&+\left\vert a^{kl}\left( t,\xi _{n}\left( t,y\right) \right) \right\vert
\left\vert \partial _{k}\zeta _{n}^{i}\left( t,\xi _{n}\left( t,y\right)
\right) \partial _{l}\zeta _{n}^{j}\left( t,\xi _{n}\left( t,y\right)
\right) -\partial _{k}\zeta ^{i}\left( t,\xi \left( t,y\right) \right)
\partial _{l}\zeta ^{j}\left( t,\xi \left( t,y\right) \right) \right\vert  \\
&\leq &C_{a,b}\left\vert \xi _{n}\left( t,y\right) -\xi \left( t,y\right)
\right\vert ^{\beta }\left\vert D\zeta \left( t,\xi \left( t,y\right)
\right) \right\vert ^{2} \\
&&+\left\Vert a\right\Vert _{\infty }\left\vert D\zeta _{n}\left( t,\xi
_{n}\left( t,y\right) \right) -D\zeta \left( t,\xi \left( t,y\right) \right)
\right\vert \left( \left\vert D\zeta _{n}\left( t,\xi _{n}\left( t,y\right)
\right) \right\vert +\left\vert D\zeta \left( t,\xi \left( t,y\right)
\right) \right\vert \right) \text{.}
\end{eqnarray*}%
If we let $n\longrightarrow \infty $, we deduce from (\ref{estimate on
derivative of inverse flow}) and the continuity of the It\^{o} map, that $a_{%
\mathbf{x}^{n}}\left( t,y\right) $ converges pointwise to $a_{\mathbf{x}%
}\left( t,y\right) $.

To prove equicontinuity, take $\left( t,y\right) ,\left( t^{\prime
},y^{\prime }\right) \in \left[ 0,T\right] \times \mathbb{R}^{e}$. \ Then,%
\begin{eqnarray}
&&\left\vert a_{\mathbf{x}^{n}}^{ij}\left( t,y\right) -a_{\mathbf{x}%
^{n}}^{ij}\left( t^{\prime },y^{\prime }\right) \right\vert  \notag \\
&\leq &\left\vert a^{kl}\left( t,\xi _{n}\left( t,y\right) \right) \partial
_{k}\zeta _{n}^{i}\left( t,\xi _{n}\left( t,y\right) \right) \partial
_{l}\zeta _{n}^{j}\left( t,\xi _{n}\left( t,y\right) \right) -a^{kl}\left(
t^{\prime },\xi _{n}\left( t^{\prime },y^{\prime }\right) \right) \partial
_{k}\zeta _{n}^{i}\left( t^{\prime },\xi _{n}\left( t^{\prime },y^{\prime
}\right) \right) \partial _{l}\zeta _{n}^{j}\left( t^{\prime },\xi
_{n}\left( t^{\prime },y^{\prime }\right) \right) \right\vert  \notag \\
&\leq &\left\vert a^{kl}\left( t,\xi _{n}\left( t,y\right) \right)
-a^{kl}\left( t^{\prime },\xi _{n}\left( t^{\prime },y^{\prime }\right)
\right) \right\vert \left\vert \partial _{k}\zeta _{n}^{i}\left( t,\xi
_{n}\left( t,y\right) \right) \partial _{l}\zeta _{n}^{j}\left( t,\xi
_{n}\left( t,y\right) \right) \right\vert  \notag \\
&&+\left\vert a^{kl}\left( t^{\prime },\xi _{n}\left( t^{\prime },y^{\prime
}\right) \right) \right\vert \left\vert \partial _{k}\zeta _{n}^{i}\left(
t,\xi _{n}\left( t,y\right) \right) \partial _{l}\zeta _{n}^{j}\left( t,\xi
_{n}\left( t,y\right) \right) -\partial _{k}\zeta _{n}^{i}\left( t^{\prime
},\xi _{n}\left( t^{\prime },y^{\prime }\right) \right) \partial _{l}\zeta
_{n}^{j}\left( t^{\prime },\xi _{n}\left( t^{\prime },y^{\prime }\right)
\right) \right\vert  \notag \\
&\leq &C_{a,b}\left( \left\vert t-t^{\prime }\right\vert ^{\beta
}+\left\vert \xi _{n}\left( t,y\right) -\xi _{n}\left( t^{\prime },y^{\prime
}\right) \right\vert ^{\beta }\right) \left\vert D\zeta ^{n}\left( t,\xi
_{n}\left( t,y\right) \right) \right\vert ^{2}  \label{first term} \\
&&+\left\Vert a\right\Vert _{\infty }\left( \left\vert D\zeta ^{n}\left(
t,\xi _{n}\left( t,y\right) \right) \right\vert +\left\vert D\zeta
^{n}\left( t^{\prime },\xi _{n}\left( t^{\prime },y^{\prime }\right) \right)
\right\vert \right) \left\vert D\zeta ^{n}\left( t,\xi _{n}\left( t,y\right)
\right) -D\zeta ^{n}\left( t^{\prime },\xi _{n}\left( t^{\prime },y^{\prime
}\right) \right) \right\vert \text{.}  \label{second term}
\end{eqnarray}%
Since $\xi _{n}\left( t,y\right) \longrightarrow \xi \left( t,y\right) $
uniformly on $\left[ 0,T\right] \times \mathbb{R}^{e}$, we deduce that $%
\left\{ \left( t,y\right) \longmapsto \xi _{n}\left( t,y\right) \right\}
_{n\in \mathbb{N}}$ is equicontinuous, and hence we can make (\ref{first
term}) arbitrarily small by taking $\left\vert t-t^{\prime }\right\vert $
and $\left\vert y-y^{\prime }\right\vert $ small enough. \ The term (\ref%
{second term}) can also be made arbitrarily small by taking $t$ close to $%
t^{\prime }$ and $y$ close to $y^{\prime }$, because the family%
\begin{equation}
\left\{ \left( t,y\right) \longmapsto D\zeta _{n}\left( t,y\right) \right\}
_{n\in \mathbb{N}}  \label{family of derv of flows}
\end{equation}%
is also equicontinuous. \ This follows because $D\zeta _{n}$ solves an RDE,
and hence similar reasoning as that in Theorem \ref{Existence Theorem} can
be used to prove the equicontinuity of (\ref{family of derv of flows}). \ 

Therefore we can conclude that 
\begin{equation*}
a_{\mathbf{x}^{n}}\left( t,y\right) \longrightarrow a_{\mathbf{x}}\left(
t,y\right)
\end{equation*}%
uniformly on $\left[ 0,T\right] \times \mathbb{R}^{e}$. \ The uniform
convergence of $b_{\mathbf{x}^{n}}\left( t,y\right) $ to$\ b_{\mathbf{x}%
}\left( t,y\right) $ can be proved using a similar procedure.
\end{proof}

Before proving the second proposition, we recall a result by Oleinik (cf.
Theorem 3.2.4 in \cite{StVa-79}).

\begin{theorem}[Oleinik estimate]
\label{Oleinik}Let $L_{t}$ be an elliptic operator of the form (\ref%
{elliptic operator}) with $a\in C_{b}^{0,2}\left( \left[ 0,T\right] \times 
\mathbb{R}^{e};S_{e}\right) $ and $b\in C_{b}^{0,2}\left( \left[ 0,T\right]
\times \mathbb{R}^{e};\mathbb{R}^{e}\right) $ . \ Given $\phi \in
C_{b}^{2}\left( \mathbb{R}^{e}\right) $ and $g\in C_{b}^{0,2}\left( \left[
0,T\right] \times \mathbb{R}^{e}\right) $, suppose that $f\in
C_{b}^{1,2}\left( \left[ 0,T\right] \times \mathbb{R}^{e}\right) $
satisfies, 
\begin{equation*}
\frac{\partial f}{\partial t}-L_{t}f=g
\end{equation*}%
with $f\left( 0,\cdot \right) =\phi $ . \ If 
\begin{equation*}
f\in C_{b}^{0,2}\left( \left[ 0,T\right] \times \mathbb{R}^{e}\right) \cap
C^{0,4}\left( \left[ 0,T\right] \times \mathbb{R}^{e}\right) 
\end{equation*}%
then $\frac{\partial f}{\partial t}\in C^{0,2}\left( \left[ 0,T\right]
\times \mathbb{R}^{e}\right) $ and there exist constants $A$ and $B$ such
that,%
\begin{equation*}
\sup_{0\leq t\leq T}\left\Vert f\left( t,\cdot \right) \right\Vert
_{C^{2}}\leq A\left( 1+\left\Vert \phi \right\Vert _{C^{2}}\right)
+B\sup_{0\leq t\leq T}\left\Vert g\left( t,\cdot \right) \right\Vert _{C^{2}}%
\text{.}
\end{equation*}
\end{theorem}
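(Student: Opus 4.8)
The plan is to run Oleinik's a~priori argument (as in Theorem~3.2.4 of \cite{StVa-79}): bound successively $\sup_{t}\|f(t,\cdot)\|_{\infty}$, $\sup_{t}\|\nabla f(t,\cdot)\|_{\infty}$ and $\sup_{t}\|\nabla^{2}f(t,\cdot)\|_{\infty}$ by applying the maximum principle to a quadratic test function of the appropriate derivative order, at each stage absorbing the ``bad'' terms -- those carrying one more derivative of $f$ than is being estimated. If $a$ were uniformly elliptic this would just be Bernstein's device, the sink $-a^{ij}\partial_{i}(\cdot)\partial_{j}(\cdot)\le-\lambda|\nabla(\cdot)|^{2}$ doing all the absorbing; but here $a$ is only non-negative definite, so one has at one's disposal merely the genuine sink $-a^{ij}\partial_{i}(\cdot)\partial_{j}(\cdot)=-\langle\nabla(\cdot),a\,\nabla(\cdot)\rangle\le0$, and the mechanism that makes it suffice is Oleinik's lemma: a non-negative $\psi\in C^{2}$ with bounded second derivatives satisfies $|\nabla\psi(y)|^{2}\le2\|\psi\|_{C^{0,2}}\,\psi(y)$ pointwise (expand $0\le\psi(y-t\nabla\psi(y))$ and optimise in $t$). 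Applied to $y\mapsto\langle\xi,a(t,y)\xi\rangle\ge0$ and combined with $a^{ii}+a^{jj}\pm2a^{ij}\ge0$, this gives $|\partial_{k}a^{ij}(t,y)|^{2}\lesssim\|a\|_{C_{b}^{0,2}}\,(a^{ii}(t,y)+a^{jj}(t,y))$, i.e. $\nabla a$ is small wherever $a$ is -- exactly the weight needed to Cauchy--Schwarz/Young-split a $\nabla a\cdot(\text{higher derivative of }f)$ term against $\langle\cdot,a\,\cdot\rangle$. The standing hypotheses $f\in C_{b}^{0,2}\cap C^{0,4}$, $a,b,g\in C_{b}^{0,2}$, $\phi\in C_{b}^{2}$ are precisely what make the repeated spatial differentiations below classical ($\nabla^{4}f$ appears when $L_{t}$ is applied to $|\nabla^{2}f|^{2}$; the would-be $\nabla^{4}f$ in the commutators cancels), and the boundedness of $f$ and of every test function lets the maximum principle be run on the unbounded strip, a barrier $\varepsilon(|y|^{2}+Kt)$ with $\varepsilon\downarrow0$ taking care of the domain.

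Zeroth order is immediate: since $L_{t}$ has no zeroth-order part, $f(t,y)-\|\phi\|_{\infty}-\int_{0}^{t}\|g(s,\cdot)\|_{\infty}\,ds$ is a bounded subsolution of $\partial_{t}h-L_{t}h\le0$, nonpositive at $t=0$, whence $\sup_{t}\|f(t,\cdot)\|_{\infty}\le\|\phi\|_{\infty}+T\sup_{t}\|g(t,\cdot)\|_{\infty}$. For the gradient, put $P:=\sum_{k}(\partial_{k}f)^{2}$; differentiating the PDE in $y^{k}$ and using $(\partial_{t}-L_{t})(\psi^{2})=2\psi(\partial_{t}-L_{t})\psi-a^{ij}\partial_{i}\psi\,\partial_{j}\psi$ gives $(\partial_{t}-L_{t})P=-\sum_{k}\langle\nabla\partial_{k}f,\,a\,\nabla\partial_{k}f\rangle+\sum_{k}\partial_{k}f\,(\partial_{k}a^{ij})\partial_{ij}f+\mathcal{R}_{1}$, with $\mathcal{R}_{1}$ depending linearly on $\nabla f$, $f$, $g$, $\nabla g$ with bounded coefficients. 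The single $\nabla^{2}f$-term carries the factor $\partial_{k}a^{ij}$, which by the lemma is $O(\sqrt{a})$; a Young splitting moves part of it into the sink $-\sum_{k}\langle\nabla\partial_{k}f,a\,\nabla\partial_{k}f\rangle$ and leaves only a multiple of $|\nabla f|^{2}\le P$ plus a constant involving $\|a\|_{C_{b}^{0,2}}$. Altogether $(\partial_{t}-L_{t})P\le c\,P+C_{0}$ with $C_{0}$ controlled by the zeroth-order bound and $\sup_{t}\|g(t,\cdot)\|_{C^{1}}^{2}$; Gronwall plus the maximum principle give $\sup_{t}\|\nabla f(t,\cdot)\|_{\infty}^{2}\lesssim\|\phi\|_{C^{1}}^{2}+\sup_{t}\|g(t,\cdot)\|_{C^{1}}^{2}$.

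The Hessian estimate is the heart of the matter and repeats the argument one derivative higher with $Q:=\sum_{k,l}(\partial_{kl}f)^{2}$ (one may add a multiple of $\sum_{k}(\partial_{k}f)^{2}$ if convenient). Differentiating the PDE twice produces an equation for $w^{kl}:=\partial_{kl}f$ whose source consists of $\partial_{kl}g$, a $\nabla^{2}a\cdot\nabla^{2}f$ term, products of first derivatives, and -- the dangerous ones -- $\nabla a\cdot\nabla^{3}f$ terms; computing $(\partial_{t}-L_{t})Q$ by the same product rule produces the sink $-\sum_{k,l}\langle\nabla\partial_{kl}f,\,a\,\nabla\partial_{kl}f\rangle\le0$. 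Using Oleinik's lemma ($\nabla a=O(\sqrt{a})$) the $\nabla^{3}f$-terms are Young-split against this sink, leaving only terms in $|\nabla^{2}f|^{2}\le Q$, in $|\nabla f|^{2}$ (controlled by the gradient bound just proved), and in $\|g\|_{C^{2}}^{2}$; one obtains $(\partial_{t}-L_{t})Q\le c'Q+C_{1}$ with $C_{1}$ controlled by the previous bounds and $\sup_{t}\|g(t,\cdot)\|_{C^{2}}^{2}$. Since $Q\ge|\nabla^{2}f|^{2}$ and $Q(0,\cdot)\lesssim\|\phi\|_{C^{2}}^{2}$, Gronwall and the maximum principle deliver $\sup_{t}\|\nabla^{2}f(t,\cdot)\|_{\infty}\lesssim(1+\|\phi\|_{C^{2}})+\sup_{t}\|g(t,\cdot)\|_{C^{2}}$, which with the earlier bounds is the asserted inequality. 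That $\partial_{t}f\in C^{0,2}$ then follows from the identity $\partial_{t}f=L_{t}f+g$ using $f\in C^{0,4}$ and $a,b,g\in C_{b}^{0,2}$.

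The step I expect to be the main obstacle is the bookkeeping in the Hessian stage: checking that \emph{every} term carrying $\nabla^{3}f$ -- several are produced, both by the commutator $[\partial_{kl},L_{t}]f$ and by the product rule -- can, via the polarised form of Oleinik's lemma, be written with the factor $\sqrt{a}$ in the correct index slot so as to be genuinely absorbable into $\langle\nabla\partial_{kl}f,a\,\nabla\partial_{kl}f\rangle$ after one Young splitting, with the surviving constants depending only on $e$, $T$, $\|a\|_{C_{b}^{0,2}}$, $\|b\|_{C_{b}^{0,2}}$ and not on $f$. A secondary, recurring point is to phrase each appeal to the maximum principle so that the test function is known a~priori to be bounded (it is, being built from $f\in C_{b}^{0,2}$ and, after each step, from quantities already shown bounded) and to insert the $\varepsilon(|y|^{2}+Kt)$ barrier on the non-compact domain. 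All of this is classical; in the application made of the theorem in this paper the coefficient $a_{\mathbf{x}}$ is moreover uniformly elliptic (Proposition \ref{Proposition Properties of ax and bx}), so Oleinik's lemma could there be dispensed with in favour of the plain Bernstein device.
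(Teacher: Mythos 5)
The paper offers no proof of Theorem \ref{Oleinik}: it is simply recalled from \cite{StVa-79} (Theorem 3.2.4), so there is no in-paper argument to compare yours with. Your sketch is essentially the classical proof of that cited result -- successive Bernstein-type maximum-principle estimates for $f$, $|\nabla f|^{2}$ and $|\nabla^{2}f|^{2}$ on the strip, with the hypotheses $f\in C_{b}^{0,2}\cap C^{0,4}$, $a,b,g\in C_{b}^{0,2}$ exactly legitimising the differentiations and the identity $\partial_{t}f=L_{t}f+g$ giving the final regularity claim -- and its architecture is sound.

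There is, however, one step that fails as literally written, and it is precisely the one you flagged as the expected obstacle: the form of Oleinik's lemma you invoke is too weak. The entrywise bound $|\partial_{k}a^{ij}|^{2}\lesssim\|a\|_{C_{b}^{0,2}}\,(a^{ii}+a^{jj})$ is true, but an entrywise Cauchy--Schwarz/Young split based on it leaves you needing $\sum_{i,j}(a^{ii}+a^{jj})\,\xi_{ij}^{2}\lesssim\sum_{i,j,m}a^{ij}\xi_{im}\xi_{jm}$ for the symmetric matrix $\xi$ of relevant second (resp.\ third) derivatives, where the right-hand side is your sink $\sum_{k}\langle\nabla\partial_{k}f,a\,\nabla\partial_{k}f\rangle$ (resp.\ $\sum_{k,l}\langle\nabla\partial_{kl}f,a\,\nabla\partial_{kl}f\rangle$); this inequality is false pointwise -- e.g.\ $a=\left(\begin{smallmatrix}1&1\\1&1\end{smallmatrix}\right)$, $\xi=\left(\begin{smallmatrix}1&-1\\-1&1\end{smallmatrix}\right)$ gives $\mathrm{tr}(a\,\xi^{2})=0$ while $\sum_{i,j}(a^{ii}+a^{jj})\xi_{ij}^{2}=8$ -- so that route cannot close. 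What is needed (and what Stroock--Varadhan actually use) is the contracted, trace form: for every symmetric $\xi$, $\bigl(\sum_{i,j}\partial_{k}a^{ij}\xi_{ij}\bigr)^{2}\le C(e)\,\|a\|_{C_{b}^{0,2}}\sum_{i,j,m}a^{ij}\xi_{im}\xi_{jm}$, obtained by diagonalising $\xi=\sum_{m}\lambda_{m}\eta_{m}\eta_{m}^{T}$, applying the scalar lemma to $y\mapsto\langle\eta_{m},a(t,y)\eta_{m}\rangle\ge0$ in each eigendirection, and using Cauchy--Schwarz in $m$. With $\partial_{k}a$ contracted against \emph{both} indices of $\nabla^{2}f$ (gradient stage: $\partial_{k}f\,\partial_{k}a^{ij}\partial_{ij}f$) or of $\nabla^{2}\partial_{l}f$ (Hessian stage: $\partial_{kl}f\,\partial_{k}a^{ij}\partial_{ijl}f$), this form does absorb into the respective sinks after one Young splitting, and the rest of your bookkeeping closes with constants depending only on $e$, $T$, $\|a\|_{C_{b}^{0,2}}$, $\|b\|_{C_{b}^{0,2}}$. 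A minor further repair: the barrier $\varepsilon(|y|^{2}+Kt)$ does not dominate the unbounded drift contribution $2b\cdot y$ in $L_{t}|y|^{2}$; use instead something like $\varepsilon e^{Kt}(1+|y|^{2})$ with $K$ large. You are also right that in the paper's application the coefficients $a_{\mathbf{x}}$ are uniformly elliptic, so there the plain Bernstein device would do; the degenerate form is what \cite{StVa-79} provides and what the paper quotes.
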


Using these estimates, we have the following result.

\begin{proposition}
\label{Convergence of sol of pdes}Suppose that for each $n\in \mathbb{N}$, $%
a_{n}:\left[ 0,T\right] \times \mathbb{R}^{e}\longrightarrow S_{e}$ and $%
b_{n}:\left[ 0,T\right] \times \mathbb{R}^{e}\longrightarrow \mathbb{R}^{e}$
satisfy the regularity condition \ref{Cond regularity a b}, and furthermore
assume that they have continuous bounded first and second order spatial
derivatives which are bounded independently of $n$.

Let $a:\left[ 0,T\right] \times \mathbb{R}^{e}\longrightarrow S_{e}$ and $b:%
\left[ 0,T\right] \times \mathbb{R}^{e}\longrightarrow \mathbb{R}^{e}$
satisfy the regularity condition \ref{Cond regularity a b},\ and suppose
that they have bounded first and second order spatial derivatives. \ Assume
that 
\begin{equation*}
a_{n}\left( t,y\right) \longrightarrow a\left( t,y\right)
\end{equation*}%
and%
\begin{equation*}
b_{n}\left( t,y\right) \longrightarrow b\left( t,y\right)
\end{equation*}%
uniformly on $\left[ 0,T\right] \times \mathbb{R}^{e}$. \ Set%
\begin{equation*}
L_{t}^{n}=\frac{1}{2}a_{n}^{ij}\left( t,\cdot \right) \frac{\partial ^{2}}{%
\partial y^{i}\partial y^{j}}+b_{n}^{i}\left( t,\cdot \right) \frac{\partial 
}{\partial y^{i}}
\end{equation*}%
and%
\begin{equation*}
L_{t}=\frac{1}{2}a^{ij}\left( t,\cdot \right) \frac{\partial ^{2}}{\partial
y^{i}\partial y^{j}}+b^{i}\left( t,\cdot \right) \frac{\partial }{\partial
y^{i}}\text{.}
\end{equation*}

Then if we define $v,v_{n}:\left[ 0,T\right] \times \mathbb{R}%
^{e}\longrightarrow \mathbb{R}$ to be the unique $C_{b}^{1,2}$\ solutions of 
\begin{equation}
\frac{\partial v}{\partial t}=L_{t}v\ \ \ \ \ \ \ v\left( 0,\cdot \right)
=\phi \left( \cdot \right) \in C_{b}^{2}\left( \mathbb{R}^{e}\right)
\label{pdeL}
\end{equation}%
and%
\begin{equation}
\frac{\partial v_{n}}{\partial t}=L_{t}^{n}v_{n}\ \ \ \ \ \ \ v_{n}\left(
0,\cdot \right) =\phi \left( \cdot \right) \in C_{b}^{2}\left( \mathbb{R}%
^{e}\right)  \label{pdeLn}
\end{equation}%
respectively, we have that%
\begin{equation*}
v_{n}\left( t,y\right) \longrightarrow v\left( t,y\right)
\end{equation*}%
uniformly on $\left[ 0,T\right] \times \mathbb{R}^{e}$.
\end{proposition}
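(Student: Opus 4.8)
The plan is to control the difference $w_{n}:=v_{n}-v$ directly by a maximum principle, rather than through a compactness argument. Since $v_{n}$ and $v$ are $C_{b}^{1,2}$ solutions of (\ref{pdeLn}) and (\ref{pdeL}) respectively and $L_{t}^{n}$ is linear, $w_{n}$ is a \emph{bounded} classical solution of the inhomogeneous parabolic problem
\begin{equation*}
\frac{\partial w_{n}}{\partial t}-L_{t}^{n}w_{n}=g_{n},\qquad w_{n}\left( 0,\cdot \right) =0,
\end{equation*}
where $g_{n}:=\left( L_{t}^{n}-L_{t}\right) v=\frac{1}{2}\left( a_{n}^{ij}-a^{ij}\right) \partial _{ij}v+\left( b_{n}^{i}-b^{i}\right) \partial _{i}v$. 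Thus the source $g_{n}$ is ``small'' in sup norm while $w_{n}$ vanishes at $t=0$, and a quantitative maximum principle will force $w_{n}\rightarrow 0$ uniformly.

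First I would check that $g_{n}\rightarrow 0$ uniformly on $\left[ 0,T\right] \times \mathbb{R}^{e}$. Because $v$ is a $C_{b}^{1,2}$ solution, its first and second spatial derivatives are bounded, say $\sup_{\left[ 0,T\right] \times \mathbb{R}^{e}}\left( \left\vert \partial _{i}v\right\vert +\left\vert \partial _{ij}v\right\vert \right) =:K_{v}<\infty $; this is exactly the type of a priori regularity that the Oleinik estimate (Theorem \ref{Oleinik}) produces, and here it is already built into the hypothesis that $v$ is $C_{b}^{1,2}$. Combined with the assumed uniform convergences $a_{n}\rightarrow a$ and $b_{n}\rightarrow b$ on $\left[ 0,T\right] \times \mathbb{R}^{e}$, this gives
\begin{equation*}
\left\Vert g_{n}\right\Vert _{\infty ;\left[ 0,T\right] \times \mathbb{R}^{e}}\leq K_{v}\left( \tfrac{1}{2}\left\Vert a_{n}-a\right\Vert _{\infty }+\left\Vert b_{n}-b\right\Vert _{\infty }\right) \longrightarrow 0.
\end{equation*}

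Next I would invoke a maximum principle for bounded solutions on the non-compact domain $\mathbb{R}^{e}\times \left[ 0,T\right] $ to conclude that $\left\Vert w_{n}\right\Vert _{\infty }\leq T\left\Vert g_{n}\right\Vert _{\infty }$, whence $v_{n}\rightarrow v$ uniformly. Concretely, since $a_{n}$ is symmetric non-negative definite and $a_{n},b_{n}$ are bounded, one may choose a constant $C\geq 0$ (depending only on the sup norms of $a_{n}$ and $b_{n}$) so that the quadratic barrier $\Psi \left( t,y\right) :=e^{Ct}\left( 1+\left\vert y\right\vert ^{2}\right) $ satisfies $\partial _{t}\Psi -L_{t}^{n}\Psi \geq 0$, with $\Psi \left( t,y\right) \rightarrow \infty $ as $\left\vert y\right\vert \rightarrow \infty $ uniformly in $t$. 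For fixed $\varepsilon >0$ the function $h:=w_{n}-t\left\Vert g_{n}\right\Vert _{\infty }-\varepsilon \Psi $ is then a bounded subsolution of the operator $\partial _{t}-L_{t}^{n}$, is $\leq 0$ at $t=0$, and tends to $-\infty $ at spatial infinity; applying the classical parabolic maximum principle on $\overline{B}_{R}\times \left[ 0,T\right] $ for $R$ large, then letting $R\rightarrow \infty $ and $\varepsilon \rightarrow 0$, yields $w_{n}\leq T\left\Vert g_{n}\right\Vert _{\infty }$, and the same argument for $-w_{n}$ gives $\left\vert w_{n}\right\vert \leq T\left\Vert g_{n}\right\Vert _{\infty }$ on $\left[ 0,T\right] \times \mathbb{R}^{e}$.

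The only genuinely delicate point is the maximum principle on the unbounded domain: one must rule out loss of mass at spatial infinity, which is precisely what the quadratic barrier $\Psi $ takes care of, using boundedness of $a_{n},b_{n}$ (for the fixed index $n$ only, so the $n$-uniformity of those bounds plays no role here). An alternative, more laborious route would use the Oleinik estimate together with Condition \ref{Cond regularity a b} and the uniform coefficient bounds to obtain $\sup_{n}\sup_{t}\left\Vert v_{n}\left( t,\cdot \right) \right\Vert _{C^{2}}<\infty $ and $\sup_{n}\sup_{t}\left\Vert \partial _{t}v_{n}\left( t,\cdot \right) \right\Vert _{\infty }<\infty $, extract a locally uniformly convergent subsequence of $\left( v_{n}\right) $ by Arzel\`{a}--Ascoli, identify its limit with the unique solution $v$ of (\ref{pdeL}), and then upgrade to uniform convergence; the maximum-principle argument above is shorter and delivers the stated uniform convergence directly.
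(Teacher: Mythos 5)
Your argument is correct, but it follows a genuinely different route from the paper's proof. The paper works through fundamental solutions: it represents $v$ and $v_{n}$ via $\Gamma $ and $\Gamma _{n}$, invokes the Oleinik estimate (Theorem \ref{Oleinik}) together with the uniform-in-$n$ bounds on the spatial derivatives of $a_{n},b_{n}$ to obtain $\sup_{n}\sup_{t}\left\Vert v_{n}\left( t,\cdot \right) \right\Vert _{C^{2}}<\infty $, deduces that $\left( \partial _{t}-L_{t}\right) v_{n}\rightarrow 0$ uniformly, and then uses the Duhamel-type representation with the fundamental solution of $\partial _{t}-L_{t}$ applied to $v_{n}-v_{m}$ (and to $v_{n}$ itself) to show the sequence is uniformly Cauchy and to identify the limit with $v$. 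You instead compare against the operator $L_{t}^{n}$: writing $w_{n}=v_{n}-v$ as a bounded classical solution of $\partial _{t}w_{n}-L_{t}^{n}w_{n}=\left( L_{t}^{n}-L_{t}\right) v$ with zero initial data, you only need the $C^{2}$ bound on the single limit function $v$ (built into its being a $C_{b}^{1,2}$ solution of (\ref{pdeL})), and the quadratic barrier $e^{Ct}\left( 1+\left\vert y\right\vert ^{2}\right) $ makes the comparison principle legitimate on the unbounded domain, giving the quantitative bound $\left\Vert v_{n}-v\right\Vert _{\infty }\leq T\left\Vert \left( L^{n}-L\right) v\right\Vert _{\infty }$. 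Your route is shorter and more elementary: it bypasses fundamental solutions, the $C^{0,4}$ regularity needed to apply Oleinik, and in fact never uses the hypotheses that the derivatives of $a_{n},b_{n}$ are bounded independently of $n$ or that $a,b$ have two bounded derivatives, and it yields an explicit rate. What the paper's approach buys is that it runs entirely on machinery (Friedman's fundamental solutions and the Oleinik estimate) that the authors have already set up and reuse elsewhere in the proof of Theorem \ref{Existence Thm 2nd order}. One cosmetic slip: your $h=w_{n}-t\left\Vert g_{n}\right\Vert _{\infty }-\varepsilon \Psi $ is bounded above rather than bounded, which is all the argument needs.
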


\begin{proof}
From Theorems 12 and 16, Chapter 1 in \cite{friedman-1964}, we know that (%
\ref{pdeL}) and (\ref{pdeLn}) have unique $C_{b}^{1,2}$ solutions $v$ and $%
v_{n}$, given by, 
\begin{equation}
v\left( t,y\right) =\int_{\mathbb{R}^{e}}\Gamma \left( t,y;0,z\right) \phi
\left( z\right) dz  \label{expression for v}
\end{equation}%
and 
\begin{equation*}
v_{n}\left( t,y\right) =\int_{\mathbb{R}^{e}}\Gamma _{n}\left(
t,y;0,z\right) \phi \left( z\right) dz
\end{equation*}%
where $\Gamma \left( t,y;0,z\right) $ and $\Gamma _{n}\left( t,y;0,z\right) $
are fundamental solutions of $\frac{\partial v}{\partial t}=L_{t}v$ and $%
\frac{\partial v_{n}}{\partial t}=L_{t}^{n}v_{n}$ respectively. \
Furthermore, since $a$ and $b$ have bounded continuous first and second
order spatial derivatives, we deduce from Proposition \ref{Proposition
Properties of ax and bx} and Theorem 10, Chapter 3 in \cite{friedman-1964}
that $v_{n},v\in C^{0,4}\left( \left[ 0,T\right] \times \mathbb{R}%
^{e}\right) $. \ Thus it follows from Theorem \ref{Oleinik} that,%
\begin{equation*}
\sup_{0\leq t\leq T}\left\Vert v_{n}\left( t,\cdot \right) \right\Vert
_{C^{2}}\leq K_{1}\left( 1+\left\Vert \phi \right\Vert _{C^{2}}\right)
\end{equation*}%
where the constant $K_{1}$ can be taken to be independent of $n$ because of
assumption on the spatial derivatives of $a_{n}$ and $b_{n}$. \ Then, 
\begin{eqnarray*}
\left\vert \frac{\partial v_{n}}{\partial t}-L_{t}v_{n}\right\vert
&=&\left\vert L_{t}^{n}v_{n}-L_{t}v_{n}\right\vert \\
&\leq &\left( \left\vert a_{n}\left( t,y\right) -a\left( t,y\right)
\right\vert +\left\vert b_{n}\left( t,y\right) -b\left( t,y\right)
\right\vert \right) \left\Vert v_{n}\left( t,\cdot \right) \right\Vert
_{C^{2}} \\
&\leq &K_{1}\left( 1+\left\Vert \phi \right\Vert _{C^{2}}\right) \left(
\left\vert a_{n}\left( t,y\right) -a\left( t,y\right) \right\vert
+\left\vert b_{n}\left( t,y\right) -b\left( t,y\right) \right\vert \right)
\end{eqnarray*}%
and hence 
\begin{equation}
\frac{\partial v_{n}}{\partial t}-L_{t}v_{n}\longrightarrow 0
\label{conv of vnt - Lnvnt}
\end{equation}%
uniformly on $\left[ 0,T\right] \times \mathbb{R}^{e}$. \ Our next task is
to deduce from (\ref{conv of vnt - Lnvnt}) that the sequence $\left\{
v_{n}\right\} $ converges uniformly. \ To do this, recall (Theorem 12 in 
\cite{friedman-1964}) that under a local H\"{o}lder continuity assumption on
the function $g$,%
\begin{equation}
\hat{v}\left( t,y\right) =\int_{\mathbb{R}^{e}}\phi \left( y\right) \Gamma
\left( t,y;0,z\right) dz-\int_{0}^{t}\left( \int_{\mathbb{R}^{e}}g\left(
s,z\right) \Gamma \left( t,y;s,z\right) dz\right)  \label{inhomogenous}
\end{equation}%
solves the inhomogenous PDE,%
\begin{equation*}
\frac{\partial \hat{v}}{\partial t}-L_{t}\hat{v}=g\ \ \ \ \ \ \ \ \ \ \hat{v}%
\left( 0,y\right) =\phi \left( y\right) \text{.}
\end{equation*}%
Trivially, for $v\,_{n,m}:=v_{n}-v_{m}$, we have,%
\begin{equation*}
\frac{\partial v_{n,m}}{\partial t}-L_{t}v_{n,m}=g_{n,m}\ \ 
\end{equation*}%
with $g_{n,m}=\left( \frac{\partial }{\partial t}-L_{t}\right) v_{n,m}\left(
t,y\right) $. \ We can use the representation (\ref{inhomogenous}), together
with \ref{conv of vnt - Lnvnt} to deduce that $\left\{ v_{n}\right\} $
converges uniformly on $\left[ 0,T\right] \times \mathbb{R}^{e}$ to some
function $\tilde{v}$. \ 

The last step in this proof is to show that $\tilde{v}=v$. \ This follows
because if we repeat the above argument with $g_{n}=\left( \frac{\partial }{%
\partial t}-L_{t}\right) v_{n}$, we get that, 
\begin{equation*}
\tilde{v}\left( t,y\right) =\int_{\mathbb{R}^{e}}\Gamma \left(
t,y;0,z\right) \phi \left( z\right) dz
\end{equation*}%
and thus from (\ref{expression for v}) we see that$\ v=\tilde{v}$. \
Therefore we can conclude that, 
\begin{equation*}
v_{n}\left( t,y\right) \longrightarrow v\left( t,y\right)
\end{equation*}%
uniformly on $\left[ 0,T\right] \times \mathbb{R}^{e}$.
\end{proof}

In the following theorem we prove the existence of a unique bounded solution
for a linear second order RPDE. \ Furthermore, we prove that the map which
sends the driving signal to the solution is continuous in the uniform
topology.

\begin{theorem}
\label{Existence Thm 2nd order}Let $p\geq 1$ and let $\mathbf{x}$ be a weak
geometric $p$-rough path. \ Assume that,

\begin{enumerate}
\item $V=\left( V_{1},\ldots ,V_{d}\right) $ is a collection of $\mathrm{Lip}%
^{\gamma }$ vector fields on $\mathbb{R}^{e}$ for $\gamma >p+3$;

\item $a:\left[ 0,T\right] \times \mathbb{R}^{e}\longrightarrow S_{e}$ and $%
b:\left[ 0,T\right] \times \mathbb{R}^{e}\longrightarrow \mathbb{R}^{e}$
satisfy the regularity condition \ref{Cond regularity a b}, and furthermore,
have continuous bounded first and second order spatial derivatives;

\item $\phi \in C_{b}^{2}\left( \mathbb{R}^{e},\mathbb{R}\right) $.
\end{enumerate}

Assume $L_{t}$ is of form (\ref{elliptic operator}) with coefficients $a,b$.
Then the RPDE,%
\begin{eqnarray}
du\left( t,y\right)  &=&L_{t}u\left( t,y\right) dt-\nabla u\left( t,y\right)
\cdot V\left( y\right) d\mathbf{x}_{t}  \label{2ndRPDE} \\
u\left( 0,y\right)  &=&\phi \left( y\right)   \notag
\end{eqnarray}%
has a unique (bounded) solution $u$, given by,%
\begin{equation*}
u\left( t,y\right) =v\left( t,\pi _{\left( V\right) }\left( 0,y;%
\overleftarrow{\mathbf{x}}^{t}\right) _{t}\right) 
\end{equation*}%
where $v$ is the $C_{b}^{1,2}$ solution of 
\begin{equation*}
\frac{\partial v}{\partial t}=L_{t}^{\mathbf{x}}v\ \ \ \ \ v\left( 0,\cdot
\right) =\phi \left( \cdot \right) \text{.}
\end{equation*}%
We denote this solution by $\Pi _{\left( a,b,V\right) }\left( 0,\phi ;%
\mathbf{x}\right) $. \ Furthermore the map,%
\begin{equation*}
\mathbf{x}\longrightarrow u=\Pi _{\left( a,b,V\right) }\left( 0,\phi ;%
\mathbf{x}\right) 
\end{equation*}%
is continuous from $C^{\frac{1}{p}-H\ddot{o}l}\left( \left[ 0,T\right] ,G^{%
\left[ p\right] }\left( \mathbb{R}^{d}\right) \right) $ into $C\left( \left[
0,T\right] \times \mathbb{R}^{e};\mathbb{R}\right) $ when the latter is\
equipped with the uniform topology.
\end{theorem}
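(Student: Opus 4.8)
The plan is to assemble the three preceding propositions. Throughout I would write $\zeta(t,y)=\pi_{(V)}(0,y;\overleftarrow{\mathbf{x}}^{t})_{t}$ and, for a sequence $\mathbf{x}^{n}$ of weak geometric $p$-rough paths, $\zeta_{n}(t,y)=\pi_{(V)}(0,y;\overleftarrow{\mathbf{x}^{n}}^{t})_{t}$. First, under the stated hypotheses Proposition \ref{Proposition Properties of ax and bx} shows that $a_{\mathbf{x}},b_{\mathbf{x}}$ satisfy Condition \ref{Cond regularity a b} and have bounded continuous first and second order spatial derivatives, so by the discussion following that proposition the PDE $\partial_{t}v=L_{t}^{\mathbf{x}}v$, $v(0,\cdot)=\phi$, has a unique $C_{b}^{1,2}$ solution $v$; hence $u(t,y):=v(t,\zeta(t,y))$ is a well-defined, bounded candidate.

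\textbf{Existence.} I would pick, by \cite{friz-victoir-2006}, Lipschitz paths $x^{n}$ with $S_{[p]}(x^{n})\equiv\mathbf{x}^{n}\to\mathbf{x}$ uniformly and $M:=\sup_{n}\|\mathbf{x}^{n}\|_{\frac{1}{p}-H\ddot{o}l;[0,T]}<\infty$. Since $\gamma>p+3\ge 2$ and $\phi\in C_{b}^{2}\subset C_{b}$, Proposition \ref{Prop Sol of PDE with Lipschitz noise} provides a unique $C_{b}^{1,2}$ solution $u_{n}$ of the PDE driven by $x^{n}$, and Lemma \ref{Lemma rpde pde} rewrites it as $u_{n}(t,y)=v_{n}(t,\zeta_{n}(t,y))$ with $v_{n}$ the unique $C_{b}^{1,2}$ solution of $\partial_{t}v_{n}=L_{t}^{\mathbf{x}^{n}}v_{n}$, $v_{n}(0,\cdot)=\phi$ (here $L_{t}^{x^{n}}=L_{t}^{\mathbf{x}^{n}}$ because $\mathbf{x}^{n}=S_{[p]}(x^{n})$). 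Then Proposition \ref{Proposition Properties of ax and bx} (this is where $\gamma>p+3$ and the two bounded spatial derivatives of $a,b$ are used) gives that $a_{\mathbf{x}^{n}},b_{\mathbf{x}^{n}}$ satisfy Condition \ref{Cond regularity a b} with $C^{2}$-norms bounded uniformly in $n$ by a constant depending only on $M$, and Proposition \ref{Convergence of coefficients} gives $a_{\mathbf{x}^{n}}\to a_{\mathbf{x}}$, $b_{\mathbf{x}^{n}}\to b_{\mathbf{x}}$ uniformly on $[0,T]\times\mathbb{R}^{e}$; so Proposition \ref{Convergence of sol of pdes} applies and delivers $v_{n}\to v$ uniformly on $[0,T]\times\mathbb{R}^{e}$. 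For the composition, the Universal Limit Theorem \ref{Thm ULT} together with the equicontinuity argument of Theorem \ref{Existence Theorem} (which uses only the $n$- and $t$-uniform H\"{o}lder bounds on $\overleftarrow{\mathbf{x}^{n}}^{t}$, available since $M<\infty$) gives $\zeta_{n}\to\zeta$ uniformly on $[0,T]\times\mathbb{R}^{e}$; and since $v\in C_{b}^{1,2}$, $v(t,\cdot)$ is uniformly Lipschitz, so
\[
\sup_{(t,y)}|u_{n}(t,y)-u(t,y)|\ \le\ \|v_{n}-v\|_{\infty}+\|\partial_{y}v\|_{\infty}\,\|\zeta_{n}-\zeta\|_{\infty}\ \longrightarrow\ 0 .
\]
Hence $u_{n}\to u$ uniformly and $u$ solves (\ref{2ndRPDE}).

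\textbf{Uniqueness.} If $\tilde{u}$ is another solution, it is a uniform limit point of $C_{b}^{1,2}$ solutions $\tilde{u}_{n}$ of PDEs driven by Lipschitz paths $z^{n}$ with $S_{[p]}(z^{n})\to\mathbf{x}$ uniformly and uniformly bounded $\frac{1}{p}$-H\"{o}lder norms. I would observe that Propositions \ref{Proposition Properties of ax and bx}, \ref{Convergence of coefficients}, \ref{Convergence of sol of pdes} and the convergence $\zeta_{n}\to\zeta$ all depend on the approximating sequence only through those two facts, so the existence argument, rerun with $z^{n}$ in place of $x^{n}$, forces $\tilde{u}_{n}\to u$ with the \emph{same} $u$; thus every limit point of $\{\tilde{u}_{n}\}$ equals $u$, i.e.\ $\tilde{u}=u$.

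\textbf{Continuity, and the main difficulty.} Given $\mathbf{x}^{n}\to\mathbf{x}$ in $\frac{1}{p}$-H\"{o}lder topology, this forces uniform convergence with uniformly bounded $\frac{1}{p}$-H\"{o}lder norms, and exactly the same chain — Proposition \ref{Proposition Properties of ax and bx} for uniform regularity and ellipticity, Proposition \ref{Convergence of coefficients} for $a_{\mathbf{x}^{n}}\to a_{\mathbf{x}}$, $b_{\mathbf{x}^{n}}\to b_{\mathbf{x}}$, Proposition \ref{Convergence of sol of pdes} for $v^{(n)}\to v$, Theorem \ref{Thm ULT} for $\zeta_{n}\to\zeta$ — composed as in the display above gives $\Pi_{(a,b,V)}(0,\phi;\mathbf{x}^{n})(t,y)=v^{(n)}(t,\zeta_{n}(t,y))\to v(t,\zeta(t,y))=\Pi_{(a,b,V)}(0,\phi;\mathbf{x})(t,y)$ uniformly in $(t,y)$. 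I expect the only real obstacle to be checking that these hypotheses feed each other with $n$-uniform constants: in particular that the rough coefficients $a_{\mathbf{x}^{n}},b_{\mathbf{x}^{n}}$ inherit two bounded spatial derivatives with bounds independent of $n$ — which is exactly why the hypothesis $\gamma>p+3$ is imposed, through the flow derivative bound (\ref{estimate on derivative of inverse flow}) — and that the limit $v$ genuinely lies in $C_{b}^{1,2}$, which is what makes the composition step legitimate.
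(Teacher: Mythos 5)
Your proposal is correct and takes essentially the same route as the paper: the same candidate $u(t,y)=v\left(t,\pi_{(V)}\left(0,y;\overleftarrow{\mathbf{x}}^{t}\right)_{t}\right)$, the same assembly of Proposition \ref{Prop Sol of PDE with Lipschitz noise}, Lemma \ref{Lemma rpde pde}, Propositions \ref{Proposition Properties of ax and bx}, \ref{Convergence of coefficients}, \ref{Convergence of sol of pdes}, and the flow convergence/equicontinuity from Theorem \ref{Existence Theorem}, with uniqueness and continuity obtained exactly as in the paper by rerunning the argument for an arbitrary approximating sequence. The only (harmless) difference is in the final composition step: you get uniform convergence directly from $\|v_{n}-v\|_{\infty}$, $\|\zeta_{n}-\zeta\|_{\infty}$ and the Lipschitz bound on the limit $v$, whereas the paper argues via pointwise convergence plus equicontinuity of the composed family, using the Oleinik estimates of Theorem \ref{Oleinik} to get bounds uniform in $n$; both are valid.
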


\begin{proof}
We note that the \textrm{Lip}$^{\gamma }$, $\gamma >p+3$, condition on the
vector fields guarantees a $C^{4}$ flow for the associated RDE, and hence
the coefficients $a_{\mathbf{x}}$ and $b_{\mathbf{x}}$ will have bounded
continuous first and second order spatial derivatives (cf. Proposition \ref%
{Proposition Properties of ax and bx}). \ Let $\left( x^{n}\right) _{n\in 
\mathbb{N}}$ be a sequence of Lipschitz paths such that,%
\begin{equation*}
S_{\left[ p\right] }\left( x^{n}\right) \equiv \mathbf{x}^{n}\longrightarrow 
\mathbf{x}
\end{equation*}%
uniformly on $\left[ 0,T\right] $ and, 
\begin{equation*}
\sup_{n}\left\Vert \mathbf{x}^{n}\right\Vert _{\frac{1}{p}-H\ddot{o}l;\left[
0,T\right] }<\infty \text{.}
\end{equation*}%
Let $u_{n}$ be the unique bounded\ solution of,%
\begin{eqnarray*}
du_{n}\left( t,y\right) &=&L_{t}u_{n}\left( t,y\right) dt-\nabla u_{n}\left(
t,y\right) \cdot V\left( y\right) dx_{t}^{n} \\
u_{n}\left( 0,y\right) &=&\phi \left( y\right) \text{.}
\end{eqnarray*}%
We know that such a solution exists because from Proposition \ref{Prop Sol
of PDE with Lipschitz noise}. Then, 
\begin{equation*}
u_{n}\left( t,y\right) =v_{n}\left( t,\pi _{\left( V\right) }\left( 0,y;%
\overleftarrow{\mathbf{x}^{n,}}^{t}\right) _{t}\right)
\end{equation*}%
where $v_{n}$ is the unique $C_{b}^{1,2}$\ classical solution of,%
\begin{equation*}
\frac{\partial v_{n}}{\partial t}=L_{t}^{\mathbf{x}^{n}}v_{n}\ \ \ \ \
v_{n}\left( 0,y\right) =\phi \left( y\right) \text{.}
\end{equation*}

We claim that the function $u$ defined by, 
\begin{equation*}
u\left( t,y\right) =v\left( t,\pi _{\left( V\right) }\left( 0,y;%
\overleftarrow{\mathbf{x}}^{t}\right) _{t}\right)
\end{equation*}%
is a solution of (\ref{2ndRPDE}), and hence we have to show that, 
\begin{equation*}
u\left( t,y\right) =\lim_{n\rightarrow \infty }u_{n}\left( t,y\right)
\end{equation*}%
uniformly on $\left[ 0,T\right] \times \mathbb{R}^{e}$ i.e.%
\begin{equation*}
v\left( t,\pi _{\left( V\right) }\left( 0,y;\overleftarrow{\mathbf{x}}%
^{t}\right) _{t}\right) =\lim_{n\rightarrow \infty }v_{n}\left( t,\pi
_{\left( V\right) }\left( 0,y;\overleftarrow{\mathbf{x}^{n,}}^{t}\right)
_{t}\right)
\end{equation*}%
uniformly on $\left[ 0,T\right] \times \mathbb{R}^{e}$.

We note that the \textrm{Lip}$^{\gamma }$, $\gamma >p+3$, condition on the
vector fields guarantees a $C^{4}$ flow for the associated RDE, and hence
the coefficients $a_{\mathbf{x}}$ and $b_{\mathbf{x}}$ will have bounded
continuous first and second order spatial derivatives (cf. Proposition \ref%
{Proposition Properties of ax and bx}).

Our first task is to prove pointwise convergence. \ For fixed $\left(
t,y\right) \in \left[ 0,T\right] \times \mathbb{R}^{e}$,%
\begin{eqnarray*}
\left\vert v_{n}\left( t,\pi _{\left( V\right) }\left( 0,y;\overleftarrow{%
\mathbf{x}^{n,}}^{t}\right) _{t}\right) -v\left( t,\pi _{\left( V\right)
}\left( 0,y;\overleftarrow{\mathbf{x}}^{t}\right) _{t}\right) \right\vert
&\leq &\left\vert v_{n}\left( t,\pi _{\left( V\right) }\left( 0,y;%
\overleftarrow{\mathbf{x}^{n,}}^{t}\right) _{t}\right) -v\left( t,\pi
_{\left( V\right) }\left( 0,y;\overleftarrow{\mathbf{x}^{n,}}^{t}\right)
_{t}\right) \right\vert \\
&&+\left\vert v\left( t,\pi _{\left( V\right) }\left( 0,y;\overleftarrow{%
\mathbf{x}^{n,}}^{t}\right) _{t}\right) -v\left( t,\pi _{\left( V\right)
}\left( 0,y;\overleftarrow{\mathbf{x}}^{t}\right) _{t}\right) \right\vert 
\text{.}
\end{eqnarray*}%
The second term on the right hand side of this inequality can be made
arbitrarily small by taking $n$ large enough since $v\left( t,\cdot \right) $
is continuous, and 
\begin{equation}
\pi _{\left( V\right) }\left( 0,y;\overleftarrow{\mathbf{x}^{n,}}^{t}\right)
_{t}\longrightarrow \pi _{\left( V\right) }\left( 0,y;\overleftarrow{\mathbf{%
x}}^{t}\right) _{t}\text{.}  \label{conv of rdes in 2nd order pf}
\end{equation}%
For the other term in the inequality, we have that, 
\begin{eqnarray}
&&\left\vert v_{n}\left( t,\pi _{\left( V\right) }\left( 0,y;\overleftarrow{%
\mathbf{x}^{n,}}^{t}\right) _{t}\right) -v\left( t,\pi _{\left( V\right)
}\left( 0,y;\overleftarrow{\mathbf{x}^{n,}}^{t}\right) _{t}\right)
\right\vert  \label{ineq in pointwise conv} \\
&\leq &\left\vert v_{n}\left( t,\pi _{\left( V\right) }\left( 0,y;%
\overleftarrow{\mathbf{x}^{n,}}^{t}\right) _{t}\right) -v_{n}\left( t,\pi
_{\left( V\right) }\left( 0,y;\overleftarrow{\mathbf{x}}^{t}\right)
_{t}\right) \right\vert  \notag \\
&&+\left\vert v\left( t,\pi _{\left( V\right) }\left( 0,y;\overleftarrow{%
\mathbf{x}^{n,}}^{t}\right) _{t}\right) -v\left( t,\pi _{\left( V\right)
}\left( 0,y;\overleftarrow{\mathbf{x}}^{t}\right) _{t}\right) \right\vert 
\notag \\
&&+\left\vert v_{n}\left( t,\pi _{\left( V\right) }\left( 0,y;\overleftarrow{%
\mathbf{x}}^{t}\right) _{t}\right) -v\left( t,\pi _{\left( V\right) }\left(
0,y;\overleftarrow{\mathbf{x}}^{t}\right) _{t}\right) \right\vert \text{.} 
\notag
\end{eqnarray}%
From the results in Proposition \ref{Proposition Properties of ax and bx}, \
we see that Oleinik's estimates in Theorem \ref{Oleinik} can be used for $%
v_{n}$ and $v$, to get 
\begin{equation}
\left\vert v_{n}\left( t,\pi _{\left( V\right) }\left( 0,y;\overleftarrow{%
\mathbf{x}^{n,}}^{t}\right) _{t}\right) -v_{n}\left( t,\pi _{\left( V\right)
}\left( 0,y;\overleftarrow{\mathbf{x}}^{t}\right) _{t}\right) \right\vert
\leq K_{1}\left( 1+\left\Vert \phi \right\Vert _{C^{1}}\right) \left\vert
\pi _{\left( V\right) }\left( 0,y;\overleftarrow{\mathbf{x}^{n,}}^{t}\right)
_{t}-\pi _{\left( V\right) }\left( 0,y;\overleftarrow{\mathbf{x}}^{t}\right)
_{t}\right\vert  \label{ineq1 in pointwise conv}
\end{equation}%
and%
\begin{equation}
\left\vert v\left( t,\pi _{\left( V\right) }\left( 0,y;\overleftarrow{%
\mathbf{x}^{n,}}^{t}\right) _{t}\right) -v\left( t,\pi _{\left( V\right)
}\left( 0,y;\overleftarrow{\mathbf{x}}^{t}\right) _{t}\right) \right\vert
\leq K_{2}\left( 1+\left\Vert \phi \right\Vert _{C^{1}}\right) \left\vert
\pi _{\left( V\right) }\left( 0,y;\overleftarrow{\mathbf{x}^{n,}}^{t}\right)
_{t}-\pi _{\left( V\right) }\left( 0,y;\overleftarrow{\mathbf{x}}^{t}\right)
_{t}\right\vert \text{.}  \label{ineq2 in pointwise conv}
\end{equation}%
Hence we deduce from (\ref{conv of rdes in 2nd order pf}) that both (\ref%
{ineq1 in pointwise conv}) and (\ref{ineq2 in pointwise conv}) go to zero as 
$n\longrightarrow \infty $.

\ The remaining term in (\ref{ineq in pointwise conv}) can also be made
arbitrarily small as $n\longrightarrow \infty $ because the convergence
results in Propositions \ref{Convergence of coefficients} and \ref%
{Convergence of sol of pdes} can be used to deduce that$\
v_{n}\longrightarrow v$.

To prove that the family%
\begin{equation*}
\left\{ \left[ 0,T\right] \times \mathbb{R}^{e}\ni \left( t,y\right)
\longmapsto v_{n}\left( t,\pi _{\left( V\right) }\left( 0,y;\overleftarrow{%
\mathbf{x}^{n,}}^{t}\right) _{t}\right) \right\} _{n\in \mathbb{N}}
\end{equation*}%
is equicontinuous, we take $t^{\prime },t\in \left[ 0,T\right] $ (w.l.o.g $%
t^{\prime }<t$) and $y^{\prime },y\in \mathbb{R}^{e}$, and consider, 
\begin{eqnarray}
&&\left\vert v_{n}\left( t,\pi _{\left( V\right) }\left( 0,y;\overleftarrow{%
\mathbf{x}^{n,}}^{t}\right) _{t}\right) -v_{n}\left( t^{\prime },\pi
_{\left( V\right) }\left( 0,y^{\prime };\overleftarrow{\mathbf{x}^{n,}}%
^{t^{\prime }}\right) _{t^{\prime }}\right) \right\vert 
\label{ineq equicontinuity} \\
&\leq &\left\vert v_{n}\left( t,\pi _{\left( V\right) }\left( 0,y;%
\overleftarrow{\mathbf{x}^{n,}}^{t}\right) _{t}\right) -v_{n}\left(
t^{\prime },\pi _{\left( V\right) }\left( 0,y;\overleftarrow{\mathbf{x}^{n,}}%
^{t}\right) _{t}\right) \right\vert   \notag \\
&&+\left\vert v_{n}\left( t^{\prime },\pi _{\left( V\right) }\left( 0,y;%
\overleftarrow{\mathbf{x}^{n,}}^{t}\right) _{t}\right) -v_{n}\left(
t^{\prime },\pi _{\left( V\right) }\left( 0,y^{\prime };\overleftarrow{%
\mathbf{x}^{n,}}^{t^{\prime }}\right) _{t^{\prime }}\right) \right\vert 
\text{.}  \notag
\end{eqnarray}%
For the first term, 
\begin{eqnarray}
\left\vert v_{n}\left( t,\pi _{\left( V\right) }\left( 0,y;\overleftarrow{%
\mathbf{x}^{n,}}^{t}\right) _{t}\right) -v_{n}\left( t^{\prime },\pi
_{\left( V\right) }\left( 0,y;\overleftarrow{\mathbf{x}^{n,}}^{t}\right)
_{t}\right) \right\vert  &\leq &\int_{t^{\prime }}^{t}\left\vert \frac{%
\partial v_{n}}{\partial s}\left( s,\pi _{\left( V\right) }\left( 0,y;%
\overleftarrow{\mathbf{x}^{n,}}^{t}\right) _{t}\right) \right\vert ds  \notag
\\
&=&\int_{t^{\prime }}^{t}\left\vert L_{s}^{n}v_{n}\left( s,\pi _{\left(
V\right) }\left( 0,y;\overleftarrow{\mathbf{x}^{n,}}^{t}\right) _{t}\right)
\right\vert ds  \notag \\
&\leq &K_{3}\left( 1+\left\Vert \phi \right\Vert _{C^{2}}\right) \left\vert
t-t^{\prime }\right\vert   \label{ineq equicontinuity 1}
\end{eqnarray}%
where $K_{3}$ is a constant which does not depend on $n$. \ To get the last
inequality we again use the estimate in Theorem \ref{Oleinik}. \ For the
other term in (\ref{ineq equicontinuity}), 
\begin{equation}
\left\vert v_{n}\left( t^{\prime },\pi _{\left( V\right) }\left( 0,y;%
\overleftarrow{\mathbf{x}^{n,}}^{t}\right) _{t}\right) -v_{n}\left(
t^{\prime },\pi _{\left( V\right) }\left( 0,y^{\prime };\overleftarrow{%
\mathbf{x}^{n,}}^{t\prime }\right) _{t^{\prime }}\right) \right\vert \leq
K_{4}\left( 1+\left\Vert \phi \right\Vert _{C^{1}}\right) \left\vert \pi
_{\left( V\right) }\left( 0,y;\overleftarrow{\mathbf{x}^{n,}}^{t}\right)
_{t}-\pi _{\left( V\right) }\left( 0,y^{\prime };\overleftarrow{\mathbf{x}%
^{n,}}^{t^{\prime }}\right) _{t^{\prime }}\right\vert \text{.}
\label{ineq equicontinuity 2}
\end{equation}%
In Theorem \ref{Existence Theorem}, \ we proved that the family,%
\begin{equation*}
\left\{ \left[ 0,T\right] \times \mathbb{R}^{e}\ni \left( t,y\right)
\longmapsto \pi _{\left( V\right) }\left( 0,y;\overleftarrow{\mathbf{x}^{n,}}%
^{t}\right) _{t}\in \mathbb{R}^{e}\right\} _{n\in \mathbb{N}}
\end{equation*}%
is equicontinuous and hence we deduce from (\ref{ineq equicontinuity 1}) and
(\ref{ineq equicontinuity 2}) that 
\begin{equation*}
\left\{ \left( t,y\right) \longmapsto v_{n}\left( t,\pi _{\left( V\right)
}\left( 0,y;\overleftarrow{\mathbf{x}^{n,}}^{t}\right) _{t}\right) \right\}
_{n\in \mathbb{N}}
\end{equation*}%
is also equicontinuous.

Therefore we can conclude that,%
\begin{equation*}
u\left( t,y\right) =v\left( t,\pi _{\left( V\right) }\left( 0,y;%
\overleftarrow{\mathbf{x}}^{t}\right) _{t}\right)
\end{equation*}%
is indeed a solution of (\ref{2ndRPDE}) .

Having established existence of solutions for (\ref{2ndRPDE}), we now prove
uniqueness. \ However, as in the case of first order equations, this follows
immediately from the pointwise convergence of 
\begin{equation*}
v_{n}\left( t,\pi _{\left( V\right) }\left( 0,y;\overleftarrow{\mathbf{x}%
^{n,}}^{t}\right) _{t}\right) \longrightarrow v\left( t,\pi _{\left(
V\right) }\left( 0,y;\overleftarrow{\mathbf{x}}^{t}\right) _{t}\right)
\end{equation*}%
proved in the first part of the proof.\ 

We still have to prove the continuity of the map which sends the driving
signal $\mathbf{x}$ to the solution $u$. \ To this end, suppose that $\left( 
\mathbf{x}^{n}\right) _{n\in \mathbb{N}}$ is a sequence of weak geometric $p$%
-rough paths converging to $\mathbf{x}$ in $\frac{1}{p}$-H\"{o}lder
topology, i.e. $d_{\frac{1}{p}-H\ddot{o}l;\ \left[ 0,T\right] }\left( 
\mathbf{x}^{n},\mathbf{x}\right) \longrightarrow 0$. \ This implies a
fortiori uniform convergence with the uniform bounds $\sup_{n}\left\Vert 
\mathbf{x}^{n}\right\Vert _{\frac{1}{p}-H\ddot{o}l;\left[ 0,T\right]
}<\infty $. \ Using the same reasoning as in the existence part of the
proof, we can show that, 
\begin{equation*}
v_{n}\left( t,\pi _{\left( V\right) }\left( 0,y;\overleftarrow{\mathbf{x}%
^{n,}}^{t}\right) _{t}\right) \longrightarrow v\left( t,\pi _{\left(
V\right) }\left( 0,y;\overleftarrow{\mathbf{x}}^{t}\right) _{t}\right)
\end{equation*}%
uniformly in $t\in \left[ 0,T\right] $ and $y\in \mathbb{R}^{e}$. \ Thus,%
\begin{equation*}
u_{n}\left( t,y\right) =v_{n}\left( t,\pi _{\left( V\right) }\left( 0,y;%
\overleftarrow{\mathbf{x}^{n,}}^{t}\right) _{t}\right) \longrightarrow
v\left( t,\pi _{\left( V\right) }\left( 0,y;\overleftarrow{\mathbf{x}}%
^{t}\right) _{t}\right) =u\left( t,y\right)
\end{equation*}%
in $C\left( \left[ 0,T\right] \times \mathbb{R}^{e},\mathbb{R}\right) $
equipped with the uniform topology. \ Therefore we conclude that the map
which sends the driving signal to the solution is indeed continuous in the
uniform topology.
\end{proof}

\section{Application to SPDEs\label{Section Brownian RPDES}}

As is well known (\cite{lyons-qian-2002}, \cite{lyonscarle-2007} and \cite%
{FVbook}), Brownian motion in $\mathbb{R}^{d}$, $B=\left( B^{1},\dots
,B^{d}\right) $, can be enhanced with L\'{e}vy's area and a.s. yields a
geometric $p$-rough path, $p\in \left( 2,3\right) $, denoted by $\mathbf{B}%
\left( \omega \right) \in C^{\frac{1}{p}-H\ddot{o}l}\left( \left[ 0,T\right]
,G^{2}\left( \mathbb{R}^{d}\right) \right) $. \ In the rest of this section
we assume that the elliptic operator $L_{t}$ is given by,%
\begin{equation*}
L_{t}=\frac{1}{2}a^{ij}\left( t,\cdot \right) \frac{\partial ^{2}}{\partial
y^{i}\partial y^{j}}+b^{i}\left( t,\cdot \right) \frac{\partial }{\partial
y^{i}}
\end{equation*}%
with $a$ and $b$ satisfying Condition \ref{Cond regularity a b}, and having
bounded continuous first and second order spatial derivatives.

\begin{proposition}
\label{Prop BM 1}Let $V=\left( V_{1},\ldots ,V_{d}\right) $ be a collection
of $\mathrm{Lip}^{\gamma }$ vector fields on $\mathbb{R}^{e}$, $\gamma >5$,
and suppose that $\phi \in C_{b}^{2}\left( \mathbb{R}^{e}\right) $. \ The
RPDE solution $\Pi _{\left( a,b,V\right) }\left( 0,\phi ;\mathbf{B}\right) $%
, to 
\begin{eqnarray*}
du\left( t,y\right)  &=&L_{t}u\left( t,y\right) dt-\nabla u\left( t,y\right)
\cdot V\left( y\right) d\mathbf{B}_{t}\left( \omega \right)  \\
u\left( 0,y\right)  &=&\phi \left( y\right) 
\end{eqnarray*}%
constructed for fixed $\omega $ in a set of full measure, gives a solution $%
u\left( t,y;\omega \right) $ to the Stratonovich SPDE 
\begin{eqnarray}
du\left( t,y\right)  &=&L_{t}u\left( t,y\right) dt-\nabla u\left( t,y\right)
\cdot V\left( y\right) \circ dB_{t}  \label{StratoSPDE} \\
u\left( 0,y\right)  &=&\phi \left( y\right) \text{.}  \notag
\end{eqnarray}
\end{proposition}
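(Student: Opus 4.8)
The plan is to identify $\Pi_{(a,b,V)}(0,\phi;\mathbf{B}(\omega))$, for a.e.\ $\omega$, with the solution Kunita \cite{Kun-90} constructs for (\ref{StratoSPDE}) via stochastic characteristics, and to deduce the coincidence from the standard pathwise--stochastic consistency for the characteristic RDE/SDE. First fix $p\in(2,3)$ with $\gamma>p+3$; this is possible precisely because $\gamma>5$. For $\omega$ outside a null set, $\mathbf{B}(\omega)\in C^{\frac1p-H\ddot{o}l}([0,T],G^{2}(\mathbb{R}^{d}))$, so Theorem \ref{Existence Thm 2nd order} applies (its hypotheses on $a,b,V,\phi$ are met here, with $\gamma>p+3$ yielding $C^{4}$ flows) and gives $u:=\Pi_{(a,b,V)}(0,\phi;\mathbf{B}(\omega))$, with $u(t,y)=v(t,\pi_{(V)}(0,y;\overleftarrow{\mathbf{B}(\omega)}^{t})_{t})$ and $v$ the unique $C_{b}^{1,2}$ solution of $\partial_{t}v=L_{t}^{\mathbf{B}(\omega)}v$, $v(0,\cdot)=\phi$.

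I would then recall Kunita's construction of the Stratonovich solution. As in the discussion preceding Lemma \ref{Lemma rpde pde} (see \cite{Kun-90}), the SPDE (\ref{StratoSPDE}) is solved by letting $\Phi_{t}(\omega)\colon y\mapsto Y_{t}^{y}(\omega)$ be the stochastic flow of the characteristic SDE $dY_{t}=V(Y_{t})\circ dB_{t}$, $Y_{0}=y$, and setting $u^{S}(t,y;\omega)=\tilde{v}(t,\Phi_{t}(\omega)^{-1}(y))$, where $\tilde{v}$ solves $\partial_{t}\tilde{v}=\tilde{L}_{t}\tilde{v}$, $\tilde{v}(0,\cdot)=\phi$, and the random coefficients of $\tilde{L}_{t}$ are given by precisely the algebraic expressions (\ref{ax})--(\ref{bx}), with $\pi_{(V)}(0,\cdot;\mathbf{x})_{t}$, $\pi_{(V)}(0,\cdot;\overleftarrow{\mathbf{x}}^{t})_{t}$ and their first and second spatial derivatives replaced by $\Phi_{t}(\omega)$, $\Phi_{t}(\omega)^{-1}$ and theirs. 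The assumption $\gamma>5$, which produces $C^{4}$ flows, is what makes this reduction and the preceding one legitimate and the coefficients of class $C_{b}^{0,2}$ (cf.\ Proposition \ref{Proposition Properties of ax and bx}).

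The heart of the matter is that, on a single set of full measure, the RDE flow driven by $\mathbf{B}(\omega)$ coincides with the Stratonovich flow $\Phi(\omega)$ together with all the spatial derivatives entering (\ref{ax})--(\ref{bx}). This is the Wong--Zakai principle of rough path theory: if $x^{n}$ denotes the dyadic piecewise linear approximation of $B(\omega)$, then $S_{[p]}(x^{n})\to\mathbf{B}(\omega)$ in $\frac1p$-H\"older rough path metric for a.e.\ $\omega$ (a.s.\ convergence together with the interpolation of \cite{friz-victoir-2006}); since the flow, its Jacobian, and its second derivative process all solve RDEs driven by $\mathbf{B}(\omega)$, the Universal Limit Theorem \ref{Thm ULT} gives $\pi_{(V)}(0,y;\mathbf{B}(\omega))_{t}=Y_{t}^{y}(\omega)$ for all $(t,y)$ a.s., with the analogous equalities for $D\pi_{(V)}$ and $D^{2}\pi_{(V)}$, and hence --- using the relation $\pi_{(V)}(0,\cdot;\mathbf{x})_{t}^{-1}=\pi_{(V)}(0,\cdot;\overleftarrow{\mathbf{x}}^{t})_{t}$ from Section \ref{Preliminaries} and the inverse function theorem --- for the inverse flow and its derivatives as well (c.f.\ \cite{lyons-qian-2002}, \cite{FVbook}). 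Substituting these equalities into (\ref{ax})--(\ref{bx}) yields $a_{\mathbf{B}(\omega)}=\tilde{a}$ and $b_{\mathbf{B}(\omega)}=\tilde{b}$ a.s., so $L_{t}^{\mathbf{B}(\omega)}=\tilde{L}_{t}$; since both pairs of coefficients satisfy Condition \ref{Cond regularity a b} (Proposition \ref{Proposition Properties of ax and bx}), uniqueness of the $C_{b}^{1,2}$ solution forces $v=\tilde{v}$, whence $u(t,y)=v(t,\pi_{(V)}(0,y;\overleftarrow{\mathbf{B}(\omega)}^{t})_{t})=\tilde{v}(t,\Phi_{t}(\omega)^{-1}(y))=u^{S}(t,y;\omega)$ a.s.

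The step I expect to be the main obstacle is exactly this last identification performed \emph{uniformly}: one must arrange a single null set off which the flow and every derivative appearing in the coefficients agree for all starting points and all $t\in[0,T]$, which rests on the joint (not merely pointwise) a.s.\ convergence of the Wong--Zakai approximations of $(B,\text{L\'evy area})$ in $\frac1p$-H\"older rough path topology and on the continuity of the ``flow-and-derivatives'' It\^{o} map. An alternative that avoids Kunita's reduction is to work directly with the PDE: with $x^{n}$ as above, $u_{n}:=\Pi_{(a,b,V)}(0,\phi;S_{[p]}(x^{n}))$ is the genuine $C_{b}^{1,2}$ solution of (\ref{StratoSPDE}) with $\circ\,dB$ replaced by $dx^{n}$; the continuity statement in Theorem \ref{Existence Thm 2nd order} gives $u_{n}\to\Pi_{(a,b,V)}(0,\phi;\mathbf{B}(\omega))$ uniformly for a.e.\ $\omega$, while the classical Wong--Zakai theorem for such linear SPDEs (\cite{Kun-90}) gives $u_{n}\to u^{S}(\cdot,\cdot;\omega)$, and the two limits must agree.
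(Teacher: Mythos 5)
Your argument is correct, but your primary route is genuinely different from the paper's. The paper proves the proposition in three lines by a two-limit argument: take piecewise linear approximations $B\left( n\right) $ of $B$; Kunita's Wong--Zakai type theorem (Section 6.4 of \cite{Kun-90}) gives convergence of the corresponding classical PDE solutions, for fixed $\left( t,y\right) $ and in probability, to the Stratonovich solution of (\ref{StratoSPDE}), while $S_{2}\left( B\left( n\right) \right) \rightarrow \mathbf{B}$ a.s.\ in $\frac{1}{p}$-H\"{o}lder rough path topology together with the continuity statement of Theorem \ref{Existence Thm 2nd order} gives a.s.\ uniform convergence of the same solutions to $\Pi _{\left( a,b,V\right) }\left( 0,\phi ;\mathbf{B}\right) $; identifying the limits shows the RPDE solution is a version of the SPDE solution. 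This is exactly the ``alternative'' you sketch in your last paragraph, so you have in effect reproduced the paper's proof as a fallback. Your main route instead works at the level of characteristics: you identify the RDE flow driven by $\mathbf{B}\left( \omega \right) $, its Jacobian and second derivatives, and the inverse flow with Kunita's Stratonovich flow and its derivatives (Wong--Zakai for RDEs plus the time-reversal identity), conclude $L_{t}^{\mathbf{B}\left( \omega \right) }=\tilde{L}_{t}$, and then invoke uniqueness of the $C_{b}^{1,2}$ solution of the transformed PDE. This is heavier --- it needs the flow-and-derivatives identification on a single null set, which you rightly flag as the delicate point --- but it buys a pathwise, almost-sure identification with Kunita's solution rather than merely a version obtained from convergence in probability at fixed $\left( t,y\right) $; the paper's argument is shorter and only uses the already-proved continuity of $\Pi _{\left( a,b,V\right) }\left( 0,\phi ;\cdot \right) $ plus a citation, at the price of concluding only that the RPDE solution is a version of the Stratonovich solution.
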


\begin{proof}
Let $B\left( n\right) $ denote the piecewise linear approximation to $B$. \
It is clear from Section $6.4$ in \cite{Kun-90},\ that the solution to%
\begin{equation*}
du\left( t,y\right) =L_{t}u\left( t,y\right) dt-\nabla u\left( t,y\right)
\cdot V\left( y\right) \circ dB_{t}\left( n\right)
\end{equation*}%
converges, at least for fixed $t,y$ and in probability, to the Stratonovich
SPDE solution (\ref{StratoSPDE}). At the same time, $S_{2}\left( B\left(
n\right) \right) \rightarrow \mathbf{B}$ a.s. in $C^{\frac{1}{p}-H\ddot{o}%
l}\left( \left[ 0,T\right] ,G^{2}\left( \mathbb{R}^{d}\right) \right) $. By
the continuity result for RPDEs, we see that the solution to 
\begin{eqnarray*}
du\left( t,y\right) &=&L_{t}u\left( t,y\right) dt-\nabla u\left( t,y\right)
\cdot V\left( y\right) d\mathbf{B}_{t}\left( \omega \right) \\
u\left( 0,y\right) &=&\phi \left( y\right)
\end{eqnarray*}%
is (a version of) the solution to the Stratonovich SPDE.
\end{proof}

In the case of SDEs, if we consider different approximations to Brownian
Motion, the solutions of the corresponding ODEs do not always converge to
the solution of the Stratonovich SDE. \ As shown in \cite{IkWa-89}, this
limit solves a Stratonovich SDE with additional drift terms. \ All this has
been studied from the rough path theory point of view in \cite{LeLy-2005}
and \cite{friz-oberhauser-2008}. \ One of the main examples considered in
this paper is the so-called\textit{\ McShane approximation}\footnote{%
Cf. Pg. 392 \cite{IkWa-89} or Section 5.7 in \cite{Kun-90}.} to Brownian
motion in $\mathbb{R}^{2}$. \ From \cite{LeLy-2005, friz-oberhauser-2008},
the step-$2$ signature of these approximations converge in $\frac{1}{p}$-H%
\"{o}lder topology, $p>2$, to a geometric $p$-rough path $\mathbf{\tilde{B}}$%
, which is basically Brownian motion enhanced with an area which is
different from the usual L\'{e}vy area, i.e.%
\begin{equation*}
\mathbf{\tilde{B}}_{t}=\exp \left( B_{t}+A_{t}+\Gamma t\right) 
\end{equation*}%
where $A_{t}$ is the L\'{e}vy area, and $\Gamma =%
\begin{pmatrix}
0 & c \\ 
-c & 0%
\end{pmatrix}%
$ for some $c$ which may be $\neq 0$.

Furthermore, for $\mathrm{Lip}^{2+\varepsilon }(\mathbb{R}^{2})$ vector
fields $V=\left( V_{1},V_{2}\right) $, it is shown in \cite%
{friz-oberhauser-2008} that $y_{t}$ is a solution of 
\begin{equation*}
dy_{t}=V\left( y_{t}\right) d\mathbf{\tilde{B}}_{t}
\end{equation*}%
started at $y_{0}\in \mathbb{R}^{e}$ if and only if, $y_{t}$ solves,%
\begin{equation*}
dy_{t}=V\left( y_{t}\right) d\mathbf{B}_{t}+c\left[ V_{1},V_{2}\right]
\left( y_{t}\right) dt
\end{equation*}%
started at $y_{0}\in \mathbb{R}^{2}$. \ Here $\mathbf{B}$ is the
Stratonovich Enhanced Brownian motion. \ Thus, 
\begin{equation*}
\pi _{\left( V_{1},V_{2}\right) }\left( 0,y_{0};\mathbf{\tilde{B}}\right)
=\pi _{\left( c\left[ V_{1},V_{2}\right] ,V_{1},V_{2}\right) }\left(
0,y_{0};\left( t,\mathbf{B}\right) \right) 
\end{equation*}%
where $\left( t,\mathbf{B}\right) $ is the canonical time-space rough path
associated with $B$. \ With the above in mind, we prove the following result.

\begin{proposition}
Let $V=\left( V_{1},V_{2}\right) $ be $\mathrm{Lip}^{\gamma }$, $\gamma >5$,
vector fields on $\mathbb{R}^{2}$, and suppose that $\phi \in
C_{b}^{2}\left( \mathbb{R}^{2}\right) $. \ Let $B\left( n\right) $ be the
McShane approximation to Brownian motion. \ Then the $C_{b}^{1,2}$ solutions
to 
\begin{eqnarray}
du^{n}\left( t,y\right)  &=&L_{t}u^{n}\left( t,y\right) dt-\nabla
u^{n}\left( t,y\right) \cdot V\left( y\right) \circ dB_{t}\left( n\right) 
\label{McShane RPDE} \\
u^{n}\left( 0,y\right)  &=&\phi \left( y\right)   \notag
\end{eqnarray}%
converge to the solution of the Stratonovich SPDE 
\begin{eqnarray}
dv\left( t,y\right)  &=&\left( L_{t}v\left( t,y\right) -\nabla v\left(
t,y\right) \cdot c\left[ V_{1},V_{2}\right] \left( y\right) \right)
dt-\nabla v\left( t,y\right) \cdot V\left( y\right) \circ dB_{t}
\label{SPDE with drift} \\
v\left( 0,y\right)  &=&\phi \left( y\right) \text{.}  \notag
\end{eqnarray}
\end{proposition}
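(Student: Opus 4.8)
The plan is in three moves: first recognise each classical solution $u^{n}$ as the rough PDE solution driven by the step-$2$ lift of the smooth path $B(n)$; then pass to the limit by the continuity statement of Theorem \ref{Existence Thm 2nd order}; and finally identify the limiting RPDE solution, which is driven by $\tilde{\mathbf{B}}$, with a version of the Stratonovich solution of (\ref{SPDE with drift}) by absorbing the extra vector field $c[V_{1},V_{2}]$ into the elliptic operator.

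To begin, fix $p\in(2,\gamma-3)$; this interval is non-empty because $\gamma>5$, and for such $p$ the McShane result of \cite{LeLy-2005,friz-oberhauser-2008} gives $S_{2}(B(n))\longrightarrow\tilde{\mathbf{B}}$ almost surely in $d_{\frac{1}{p}-H\ddot{o}l;[0,T]}$, with $\tilde{\mathbf{B}}$ a weak geometric $p$-rough path. Since $B(n)$ is Lipschitz, $S_{2}(B(n))$ is its canonical lift, and applying Definition \ref{solution of RPDE} to the constant sequence $(S_{2}(B(n)),S_{2}(B(n)),\dots)$ shows that the unique $C_{b}^{1,2}$ solution $u^{n}$ of (\ref{McShane RPDE}) (unique by Proposition \ref{Prop Sol of PDE with Lipschitz noise}) equals $\Pi_{(a,b,V)}(0,\phi;S_{2}(B(n)))$; all hypotheses of Theorem \ref{Existence Thm 2nd order} hold, since $V$ is $\mathrm{Lip}^{\gamma}$ with $\gamma>p+3$ by the choice of $p$, $\phi\in C_{b}^{2}$, and $a,b$ satisfy Condition \ref{Cond regularity a b} with bounded continuous first and second spatial derivatives (the standing assumptions of this section). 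By the continuity part of Theorem \ref{Existence Thm 2nd order},
\begin{equation*}
u^{n}=\Pi_{(a,b,V)}(0,\phi;S_{2}(B(n)))\longrightarrow\Pi_{(a,b,V)}(0,\phi;\tilde{\mathbf{B}})\qquad\text{a.s., uniformly on }[0,T]\times\mathbb{R}^{2}.
\end{equation*}

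It remains to identify $\Pi_{(a,b,V)}(0,\phi;\tilde{\mathbf{B}})$ with a version of the solution of (\ref{SPDE with drift}). Put $\hat{L}_{t}:=L_{t}-c[V_{1},V_{2}]\cdot\nabla$, an elliptic operator with the same diffusion matrix $a$ and drift $\hat{b}:=b-c[V_{1},V_{2}]$. Since $V_{1},V_{2}\in\mathrm{Lip}^{\gamma}$ with $\gamma>5$, the bracket $[V_{1},V_{2}]$ is $\mathrm{Lip}^{\gamma-1}$ with $\gamma-1>4$, hence bounded, globally Lipschitz, and with bounded continuous derivatives up to order two; consequently $(a,\hat{b})$ again satisfies Condition \ref{Cond regularity a b} and has bounded continuous first and second spatial derivatives. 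Thus Proposition \ref{Prop BM 1}, applied with coefficients $(a,\hat{b})$ and vector fields $V=(V_{1},V_{2})$, identifies $\Pi_{(a,\hat{b},V)}(0,\phi;\mathbf{B})$ as a version of the solution of $d\bar{u}=\hat{L}_{t}\bar{u}\,dt-\nabla\bar{u}\cdot V(y)\circ dB_{t}$, which is exactly (\ref{SPDE with drift}). So the proof reduces to the deterministic identity
\begin{equation*}
\Pi_{(a,b,V)}(0,\phi;\tilde{\mathbf{B}})=\Pi_{(a,\hat{b},V)}(0,\phi;\mathbf{B}),
\end{equation*}
which I would prove by representing both sides through the method of characteristics along a single approximation. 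Approximating $\tilde{\mathbf{B}}$ by the McShane paths $B(m)$, Lemma \ref{Lemma rpde pde} writes the classical solution driven by $B(m)$ as $v_{m}(t,\pi_{(V)}(0,\cdot;B(m))_{t}^{-1}(y))$, with $v_{m}$ solving $\partial_{t}v_{m}=L_{t}^{S_{2}(B(m))}v_{m}$, $v_{m}(0,\cdot)=\phi$. Approximating $\mathbf{B}$ by piecewise-linear paths $w^{m}$, the characteristic system of the drift-augmented Lipschitz PDE $d\hat{u}_{m}=L_{t}\hat{u}_{m}\,dt-\nabla\hat{u}_{m}\cdot c[V_{1},V_{2}](y)\,dt-\nabla\hat{u}_{m}\cdot V(y)\,dw^{m}_{t}$ is $dy=c[V_{1},V_{2}](y)\,dt+V(y)\,dw^{m}_{t}$, and the same computation as in Lemma \ref{Lemma rpde pde} writes its solution as $\hat{v}_{m}(t,\pi_{(c[V_{1},V_{2}],V)}(0,\cdot;(t,w^{m}))_{t}^{-1}(y))$, with $\hat{v}_{m}$ solving the PDE obtained by conjugating $L_{t}$ with that flow, $\hat{v}_{m}(0,\cdot)=\phi$. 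By the relation $\pi_{(V_{1},V_{2})}(0,\cdot;\tilde{\mathbf{B}})=\pi_{(c[V_{1},V_{2}],V_{1},V_{2})}(0,\cdot;(t,\mathbf{B}))$ recalled before the statement, the continuity of the It\^{o} map (Theorem \ref{Thm ULT}), the uniformity in the starting point (as in Proposition \ref{Proposition Properties of ax and bx}), and the uniform H\"{o}lder bounds built into the McShane and piecewise-linear constructions, both characteristic flows converge, in the sense needed by Proposition \ref{Convergence of coefficients}, to the common flow $\psi_{t}:=\pi_{(V)}(0,\cdot;\tilde{\mathbf{B}})_{t}$; hence the two families of conjugated elliptic operators both converge (Proposition \ref{Convergence of coefficients}) to the common operator $L_{t}^{\tilde{\mathbf{B}}}$, and then Proposition \ref{Convergence of sol of pdes} forces $v_{m}$ and $\hat{v}_{m}$ to share the limit $\bar{v}$ solving $\partial_{t}\bar{v}=L_{t}^{\tilde{\mathbf{B}}}\bar{v}$, $\bar{v}(0,\cdot)=\phi$. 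Passing to the limit in the two characteristic representations yields the identity, since both sides equal $\bar{v}(t,\psi_{t}^{-1}(y))$.

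The step I expect to be the main obstacle is exactly this last identity: one must carefully bookkeep that replacing the driving rough path $\mathbf{B}$ by $\tilde{\mathbf{B}}$ in the transport term is the same as adjoining the vector field $c[V_{1},V_{2}]$ to the characteristic system, equivalently adding the first-order term $-c[V_{1},V_{2}]\cdot\nabla$ to $L_{t}$, while re-checking along the way that the modified coefficients $(a,\hat{b})$ meet the hypotheses of Propositions \ref{Convergence of coefficients}, \ref{Convergence of sol of pdes} and \ref{Prop BM 1}, and that the $C^{2}$ flow regularity needed to define the conjugated operators remains guaranteed by $\gamma>p+3$. Everything else is a direct application of results already in place: Theorem \ref{Existence Thm 2nd order} for existence, uniqueness and continuity of RPDE solutions, and Proposition \ref{Prop BM 1} for the passage from the rough to the Stratonovich interpretation.
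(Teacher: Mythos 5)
Your proposal is correct and rests on the same three pillars as the paper's own proof: the continuity part of Theorem \ref{Existence Thm 2nd order} to pass from the classical solutions $u^{n}$ to the RPDE solution driven by $\mathbf{\tilde{B}}$, the flow identity $\pi _{\left( V_{1},V_{2}\right) }\left( 0,\cdot ;\mathbf{\tilde{B}}\right) =\pi _{\left( c\left[ V_{1},V_{2}\right] ,V_{1},V_{2}\right) }\left( 0,\cdot ;\left( t,\mathbf{B}\right) \right) $ from \cite{friz-oberhauser-2008}, and Proposition \ref{Prop BM 1} to pass from the rough to the Stratonovich interpretation. The difference is in how the middle identification is executed: the paper reads it off directly from the solution formula already established in Theorem \ref{Existence Thm 2nd order} — $u\left( t,y\right) =v\left( t,\pi _{\left( V\right) }\left( 0,y;\overleftarrow{\mathbf{\tilde{B}}}^{t}\right) _{t}\right) $ with $\partial _{t}v=L_{t}^{\mathbf{\tilde{B}}}v$, and $w_{t}^{\mathbf{\tilde{B}}}=w_{t}^{\mathbf{X}}$ with $\mathbf{X}=\left( t,\mathbf{B}\right) $ gives $L_{t}^{\mathbf{\tilde{B}}}=L_{t}^{\mathbf{X}}$, so no further limiting argument is needed — whereas you re-run a double approximation (McShane paths on one side, piecewise linear paths plus the explicit drift $c\left[ V_{1},V_{2}\right] $ on the other) and re-invoke Propositions \ref{Convergence of coefficients} and \ref{Convergence of sol of pdes}. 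That detour is sound in outline but redundant, since everything it delivers is already contained in the solution formula plus the flow identity; it also reintroduces the augmented collection $\left( c\left[ V_{1},V_{2}\right] ,V_{1},V_{2}\right) $ into the second-order machinery, where an off-the-shelf appeal to Proposition \ref{Proposition Properties of ax and bx} would ask for $\left[ V_{1},V_{2}\right] \in \mathrm{Lip}^{\gamma ^{\prime }}$, $\gamma ^{\prime }>p+3$ (i.e. $\gamma >p+4$), a bookkeeping point which, to be fair, the paper also passes over silently and which is harmless because the extra vector field is driven by the smooth time coordinate. A genuine merit of your write-up is the reformulation $\Pi _{\left( a,b,V\right) }\left( 0,\phi ;\mathbf{\tilde{B}}\right) =\Pi _{\left( a,\hat{b},V\right) }\left( 0,\phi ;\mathbf{B}\right) $ with $\hat{b}=b-c\left[ V_{1},V_{2}\right] $: it makes the final application of Proposition \ref{Prop BM 1} literally legitimate, since $\left[ V_{1},V_{2}\right] \in \mathrm{Lip}^{\gamma -1}$ visibly keeps $\left( a,\hat{b}\right) $ within Condition \ref{Cond regularity a b} with two bounded spatial derivatives, a step the paper leaves implicit. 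One small adjustment: choose $p\in \left( 2,3\right) $ with $p<\gamma -3$ (non-empty since $\gamma >5$), so that the step-$2$ lifts $S_{2}\left( B\left( n\right) \right) $ and $\mathbf{\tilde{B}}$ are genuine $G^{\left[ p\right] }$-valued rough paths.
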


\begin{proof}
From our continuity result in Theorem \ref{Existence Thm 2nd order}, we know
that 
\begin{equation*}
u^{n}\left( t,y\right) \longrightarrow u\left( t,y\right)
\end{equation*}%
uniformly on $\left[ 0,T\right] \times \mathbb{R}^{2}$, where $u$ is the
unique solution of the RPDE,%
\begin{eqnarray*}
du\left( t,y\right) &=&L_{t}u\left( t,y\right) dt-\nabla u\left( t,y\right)
\cdot V\left( y\right) d\mathbf{\tilde{B}}_{t}\left( \omega \right) \\
u\left( 0,y\right) &=&\phi \left( y\right)
\end{eqnarray*}%
Furthermore, 
\begin{equation}
u\left( t,y\right) =v\left( t,\pi _{\left( V\right) }\left( 0,y;%
\overleftarrow{\mathbf{\tilde{B}}}^{t}\right) _{t}\right)  \label{defn of u}
\end{equation}%
where $v$ is the unique $C_{b}^{1,2}$ solution of 
\begin{equation*}
\frac{\partial v}{\partial t}=L_{t}^{\mathbf{\tilde{B}}}v\ \ \ \ \ v\left(
0,y\right) =\phi \left( y\right) \text{.}
\end{equation*}%
But from the results in \cite{friz-oberhauser-2008}, we deduce that, 
\begin{equation*}
w_{t}^{\mathbf{\tilde{B}}}=w_{t}^{\mathbf{X}}
\end{equation*}%
where $\mathbf{X}=\left( t,\mathbf{B}_{t}\right) $, and hence, 
\begin{equation*}
L_{t}^{\mathbf{\tilde{B}}}=L_{t}^{\mathbf{X}}\text{.}
\end{equation*}%
Therefore $v$ solves,%
\begin{equation*}
\frac{\partial v}{\partial t}=L_{t}^{\mathbf{X}}v\ \ \ \ \ v\left(
0,y\right) =\phi \left( y\right)
\end{equation*}%
and since%
\begin{equation*}
\pi _{\left( V_{1},V_{2}\right) }\left( 0,y;\mathbf{\tilde{B}}\right) =\pi
_{\left( c\left[ V_{1},V_{2}\right] ,V_{1},V_{2}\right) }\left( 0,y;\left( t,%
\mathbf{B}\right) \right)
\end{equation*}%
we deduce that $u$ defined in (\ref{defn of u}) solves,%
\begin{eqnarray}
dv\left( t,y\right) &=&\left( L_{t}v\left( t,y\right) -\nabla v\left(
t,y\right) \cdot c\left[ V_{1},V_{2}\right] \left( y\right) \right)
dt-\nabla v\left( t,y\right) \cdot V\left( y\right) d\mathbf{B}_{t} \\
v\left( 0,y\right) &=&\phi \left( y\right) \text{.}  \notag
\end{eqnarray}%
From Proposition \ref{Prop BM 1}, we get that $u$ solves the Stratonovich
SPDE (\ref{SPDE with drift}).
\end{proof}

In Theorem \ref{Existence Thm 2nd order}, we saw that $\mathbf{x}\mapsto \Pi
_{\left( a,b,V\right) }\left( 0,\phi ;\mathbf{x}\right) $ is continuous as a
map from $C^{\frac{1}{p}-H\ddot{o}l}\left( \left[ 0,T\right] ,G^{\left[ p%
\right] }\left( \mathbb{R}^{d}\right) \right) $ into $C\left( \left[ 0,T%
\right] \times \mathbb{R}^{e},\mathbb{R}\right) $, with uniform topology,
whenever $V\in \mathrm{Lip}^{\gamma }\left( \mathbb{R}^{e}\right) $, $\gamma
>p+3$, and $\phi \in C_{b}^{2}\left( \mathbb{R}^{e},\mathbb{R}\right) $. It
is consistent to write $\Pi _{\left( a,b,V\right) }\left( 0,\phi ;h\right) $
for the PDE solution%
\begin{eqnarray*}
du\left( t,y\right) &=&L_{t}u\left( t,y\right) dt-\nabla u\left( t,y\right)
\cdot V\left( y\right) dh_{t} \\
u\left( 0,y\right) &=&\phi \left( y\right)
\end{eqnarray*}%
when $h\in $\ $C^{2}\left( \left[ 0,T\right] ,\mathbb{R}^{d}\right) $. \ 

\begin{theorem}[Support]
Assume $h\in C^{2}\left( \left[ 0,T\right] ,\mathbb{R}^{d}\right) $ and $%
\delta >0$. Then\footnote{%
The infinity norm of $B-h$ is based on Euclidean norm on $\mathbb{R}^{d}$.}%
\begin{equation*}
\mathbb{P}\left( \left. \left\vert \Pi _{\left( a,b,V\right) }\left( 0,\phi ;%
\mathbf{B}\right) -\Pi _{\left( a,b,V\right) }\left( 0,\phi ;h\right)
\right\vert _{\infty ;\left[ 0,T\right] \times \mathbb{R}^{e}}>\delta
\right\vert \left\vert B-h\right\vert _{\infty ;\left[ 0,T\right]
}<\varepsilon \right) \rightarrow _{\varepsilon \rightarrow 0}0\text{.}
\end{equation*}%
In particular, the topological support of the solution to the Stratonovich
SPDE (\ref{StratoSPDE}) is the closure of 
\begin{equation*}
\left\{ \Pi _{\left( a,b,V\right) }\left( 0,\phi ;h\right) :h\in C^{2}\left( 
\left[ 0,T\right] ,\mathbb{R}^{d}\right) \right\}
\end{equation*}%
in uniform topology.
\end{theorem}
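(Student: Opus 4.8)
The plan is to reduce the statement to two ingredients: the continuity of the second-order rough solution map (Theorem~\ref{Existence Thm 2nd order}) and the conditional form of the Stroock--Varadhan support theorem for Brownian motion enhanced with L\'{e}vy area, seen in $\frac{1}{p}$-Hölder rough path topology. As a preliminary I would record that for $h\in C^{2}\left(\left[0,T\right],\mathbb{R}^{d}\right)$ the notation $\Pi_{\left(a,b,V\right)}\left(0,\phi;h\right)$ is consistent with the rough solution map: $S_{\left[p\right]}\left(h\right)$ is a weak geometric $p$-rough path, and using the constant approximating sequence $x^{n}\equiv h$ in Definition~\ref{solution of RPDE} one sees that $\Pi_{\left(a,b,V\right)}\left(0,\phi;S_{\left[p\right]}\left(h\right)\right)$ coincides with the classical $C_{b}^{1,2}$ solution of Proposition~\ref{Prop Sol of PDE with Lipschitz noise}. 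Since the equation only depends on the increments of $h$, I may also assume $h(0)=0$.

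For the displayed conditional limit, I would first note that the continuity in Theorem~\ref{Existence Thm 2nd order} is locally uniform on $\frac{1}{p}$-Hölder balls---this is how it is obtained in its proof, through the uniform continuity of the It\^{o} map and of the PDE solution $v$ on $\left\{\left\Vert\mathbf{x}\right\Vert_{\frac{1}{p}-H\ddot{o}l}\leq M\right\}$. Hence, given $\delta>0$, I fix $M>\left\Vert S_{\left[p\right]}\left(h\right)\right\Vert_{\frac{1}{p}-H\ddot{o}l}$ and choose $\eta\in\left(0,M-\left\Vert S_{\left[p\right]}\left(h\right)\right\Vert_{\frac{1}{p}-H\ddot{o}l}\right)$ small enough that $d_{\frac{1}{p}-H\ddot{o}l}\left(\mathbf{x},S_{\left[p\right]}\left(h\right)\right)<\eta$ implies, by sub-additivity of the homogeneous norm, $\left\Vert\mathbf{x}\right\Vert_{\frac{1}{p}-H\ddot{o}l}\leq M$, and then (by local uniform continuity) $\left\vert\Pi_{\left(a,b,V\right)}\left(0,\phi;\mathbf{x}\right)-\Pi_{\left(a,b,V\right)}\left(0,\phi;h\right)\right\vert_{\infty;\left[0,T\right]\times\mathbb{R}^{e}}\leq\delta$. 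The conditional probability in the theorem is then at most $\mathbb{P}\left(\left.d_{\frac{1}{p}-H\ddot{o}l}\left(\mathbf{B},S_{\left[p\right]}\left(h\right)\right)>\eta\,\right\vert\,\left\vert B-h\right\vert_{\infty;\left[0,T\right]}<\varepsilon\right)$, which tends to $0$ as $\varepsilon\to0$ by the conditional support theorem for the Brownian rough path in Hölder topology (cf.\ \cite{FVbook}).

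For the ``in particular'' claim I would use Proposition~\ref{Prop BM 1}: the solution of the Stratonovich SPDE~(\ref{StratoSPDE}) is (a version of) $\Pi_{\left(a,b,V\right)}\left(0,\phi;\mathbf{B}\right)$, hence its law is the image of the law of $\mathbf{B}$ under the continuous map $\mathbf{x}\mapsto\Pi_{\left(a,b,V\right)}\left(0,\phi;\mathbf{x}\right)$. The inclusion ``$\subseteq$'' then follows because a.s.\ $\mathbf{B}$ lies in the closure of $\left\{S_{\left[p\right]}\left(h\right):h\in C^{2}\right\}$ in $\frac{1}{p}$-Hölder topology (the classical support theorem for $\mathbf{B}$), so by continuity $\Pi_{\left(a,b,V\right)}\left(0,\phi;\mathbf{B}\right)$ a.s.\ lies in the uniform closure of $\left\{\Pi_{\left(a,b,V\right)}\left(0,\phi;h\right):h\in C^{2}\right\}$. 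The inclusion ``$\supseteq$'' follows from the conditional limit already proved: for $\varepsilon$ small the conditional probability of $\left\vert\Pi_{\left(a,b,V\right)}\left(0,\phi;\mathbf{B}\right)-\Pi_{\left(a,b,V\right)}\left(0,\phi;h\right)\right\vert_{\infty}\leq\delta$ exceeds $\frac{1}{2}$, while $\mathbb{P}\left(\left\vert B-h\right\vert_{\infty}<\varepsilon\right)>0$ (Cameron--Martin, using $h(0)=0$), so $\Pi_{\left(a,b,V\right)}\left(0,\phi;h\right)$ lies in the support of the solution's law.

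The step I expect to be the real obstacle is precisely the conditional support theorem for enhanced Brownian motion in $\frac{1}{p}$-Hölder rough path metric---that $d_{\frac{1}{p}-H\ddot{o}l}\left(\mathbf{B},S_{\left[p\right]}\left(h\right)\right)\to0$ in probability conditionally on $\left\{\left\vert B-h\right\vert_{\infty}<\varepsilon\right\}$ as $\varepsilon\to0$. This is a known but delicate refinement of the Stroock--Varadhan theorem and is the only genuinely probabilistic input; granting it, everything else is a routine combination with Theorem~\ref{Existence Thm 2nd order} and Proposition~\ref{Prop BM 1}.
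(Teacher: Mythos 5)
Your proposal is correct and follows essentially the same route as the paper: the conditional statement and the inclusion of the PDE solutions $\Pi _{\left( a,b,V\right) }\left( 0,\phi ;h\right) $ in the support are obtained from the conditional support theorem for enhanced Brownian motion in $\frac{1}{p}$-H\"{o}lder rough path topology (the paper cites \cite{friz-lyons-stroock-06} for exactly this input) combined with the continuity of the solution map from Theorem \ref{Existence Thm 2nd order}, with Proposition \ref{Prop BM 1} identifying the RPDE solution with the Stratonovich SPDE solution. The only minor difference is in the reverse inclusion, where you invoke almost-sure membership of $\mathbf{B}$ in the H\"{o}lder rough path closure of lifted smooth paths (the support theorem itself), while the paper produces the required smooth approximations explicitly via Karhunen--Loeve convergence results from \cite{friz-Victoir-2007b}; these are interchangeable standard facts, and your extra remarks (reduction to $h\left( 0\right) =0$, local uniformity of the continuity) are harmless refinements.
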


\begin{proof}
The conditioning statement is a direct consequence of the main result of 
\cite{friz-lyons-stroock-06} and continuity of the RPDE solution map $\Pi
_{\left( a,b,V\right) }\left( 0,\phi ;\mathbf{\cdot }\right) $. Since $%
\left\{ \left\vert B-h\right\vert _{\infty ;\left[ 0,T\right] }<\varepsilon
\right\} $ has positive probability this implies 
\begin{equation*}
\left\{ \Pi _{\left( a,b,V\right) }\left( 0,\phi ;h\right) :h\in C^{2}\left( 
\left[ 0,T\right] ,\mathbb{R}^{d}\right) \right\} \subset \text{ \textrm{%
support}}\left( \mathbb{P}_{\ast }\Pi _{\left( a,b,V\right) }\left( 0,\phi ;%
\mathbf{B}\right) \right) \text{.}
\end{equation*}%
The other inclusion holds since%
\begin{equation*}
\mathrm{support}\left( \mathbb{P}_{\ast }\Pi _{\left( a,b,V\right) }\left(
0,\phi ;\mathbf{B}\right) \right) \subset \overline{\left\{ \Pi _{\left(
a,b,V\right) }\left( 0,\phi ;h\right) :h\in C^{\infty }\left( \left[ 0,T%
\right] ,\mathbb{R}^{d}\right) \right\} }\text{,}
\end{equation*}%
This follows directly from continuity of $\Pi _{\left( a,b,V\right) }\left(
0,\phi ;\mathbf{\cdot }\right) $, provided we can find smooth approximations 
$B^{n}$ to $B$, such that $d_{\frac{1}{p}-H\ddot{o}l;\left[ 0,T\right]
}\left( S_{2}\left( B^{n}\right) ,\mathbf{B}\right) \longrightarrow 0$. \ We
know that such approximations exist from the Karhunen-Loeve expansion of
Brownian Motion based on the $\sin /\cos $ basis of $L^{2}$, and general
results of rough path convergence of the Karhunen-Loeve expansion proved in 
\cite{friz-Victoir-2007b}.
\end{proof}

\begin{remark}
It is easy to see that the closure of $\left\{ \Pi _{\left( a,b,V\right)
}\left( 0,\phi ;h\right) :h\in C^{2}\left( \left[ 0,T\right] ,\mathbb{R}%
^{d}\right) \right\} $ coincides with the closure of $\left\{ \Pi _{\left(
a,b,V\right) }\left( 0,\phi ;h\right) :h\in W^{1,2}\left( \left[ 0,T\right] ,%
\mathbb{R}^{d}\right) \right\} $.
\end{remark}

\bigskip 

Clearly, $\Pi _{\left( a,b,V\right) }\left( 0,\phi ;\mathbf{B}\left( \sqrt{%
\varepsilon }\cdot \right) \right) $ converges in distribution as $%
\varepsilon \rightarrow 0$ to $\Pi _{\left( a,b,V\right) }\left( 0,\phi
;0\right) $, the solution of the PDE $\frac{\partial v}{\partial t}=L_{t}v$
. The following LDP\ principle quantifies the rate of this convergence. 

\begin{theorem}[Large Deviations]
The family $\left( \mathbb{P}_{\ast }\Pi _{\left( a,b,V\right) }\left(
0,\phi ;\mathbf{B}\left( \sqrt{\varepsilon }\cdot \right) \right) \right) $
satisfies a large deviation principle with good rate function%
\begin{equation*}
J\left( u\right) =\inf \left\{ \frac{1}{2}\int_{0}^{T}\left\vert \dot{h}%
_{t}\right\vert ^{2}dt:h\in W^{1,2}\left( \left[ 0,T\right] ,\mathbb{R}%
^{d}\right) \text{ and }\Pi _{\left( a,b,V\right) }\left( 0,\phi ;h\right)
=u\right\} \text{.}
\end{equation*}
\end{theorem}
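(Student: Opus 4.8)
The plan is to obtain this as a direct application of the contraction principle, combining the continuity of the RPDE solution map established in Theorem \ref{Existence Thm 2nd order} with the large deviation principle for (rescaled) enhanced Brownian motion.

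First I would recall the extended Schilder theorem: for $p\in(2,3)$ the Stratonovich enhanced Brownian motion $\mathbf{B}\in C^{\frac{1}{p}-H\ddot{o}l}([0,T],G^{2}(\mathbb{R}^{d}))$ has the property that the rescaled family $\mathbf{B}(\sqrt{\varepsilon}\,\cdot)$ satisfies a large deviation principle in $C^{\frac{1}{p}-H\ddot{o}l}([0,T],G^{2}(\mathbb{R}^{d}))$, equipped with the $\frac{1}{p}$-H\"older topology induced by $d_{\frac{1}{p}-H\ddot{o}l;[0,T]}$, with good rate function
\begin{equation*}
I(\mathbf{x})=\begin{cases}\dfrac{1}{2}\displaystyle\int_{0}^{T}|\dot{h}_{t}|^{2}\,dt & \text{if }\mathbf{x}=S_{2}(h)\text{ for some }h\in W^{1,2}([0,T],\mathbb{R}^{d}),\\[1ex] +\infty & \text{otherwise,}\end{cases}
\end{equation*}
where $W^{1,2}$ is the Cameron--Martin space of $B$ and $S_{2}(h)$ is the canonical step-$2$ lift of the absolutely continuous, finite-variation path $h$ (see \cite{FVbook}). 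Since the standing hypotheses here ($V\in\mathrm{Lip}^{\gamma}$ with $\gamma>5$, so that $\gamma>p+3$ for $p$ sufficiently close to $2$; $a,b$ as in Condition \ref{Cond regularity a b} with two bounded continuous spatial derivatives; $\phi\in C_{b}^{2}$) are exactly those of Theorem \ref{Existence Thm 2nd order}, the map $F:\mathbf{x}\longmapsto\Pi_{(a,b,V)}(0,\phi;\mathbf{x})$ is continuous from $C^{\frac{1}{p}-H\ddot{o}l}([0,T],G^{2}(\mathbb{R}^{d}))$ into $C([0,T]\times\mathbb{R}^{e};\mathbb{R})$ with the uniform topology.

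The contraction principle then yields that the pushforward family $(\mathbb{P}_{\ast}F(\mathbf{B}(\sqrt{\varepsilon}\,\cdot)))=(\mathbb{P}_{\ast}\Pi_{(a,b,V)}(0,\phi;\mathbf{B}(\sqrt{\varepsilon}\,\cdot)))$ satisfies a large deviation principle with good rate function $J(u)=\inf\{I(\mathbf{x}):F(\mathbf{x})=u\}$, goodness being preserved by pushforward under a continuous map. It remains to identify $J$ with the stated rate function. Because $I(\mathbf{x})=+\infty$ unless $\mathbf{x}=S_{2}(h)$ with $h\in W^{1,2}$, the infimum may be restricted to such lifts, and there $I(S_{2}(h))=\frac{1}{2}\int_{0}^{T}|\dot{h}_{t}|^{2}\,dt$. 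Moreover, for $h\in W^{1,2}\subset W^{1,1}$ the path $h$ is absolutely continuous of bounded variation, so the RDE driven by $S_{2}(h)$ coincides with the ODE driven by $h$, whence $F(S_{2}(h))$ is precisely the classical PDE solution $\Pi_{(a,b,V)}(0,\phi;h)$ in the sense fixed just before the Support theorem; for $h\in W^{1,2}\setminus C^{2}$ one approximates $h$ by $C^{2}$ paths whose step-$2$ lifts converge in $\frac{1}{p}$-H\"older rough-path metric and invokes continuity of $F$ once more. Substituting gives
\begin{equation*}
J(u)=\inf\Big\{\tfrac{1}{2}\int_{0}^{T}|\dot{h}_{t}|^{2}\,dt:\ h\in W^{1,2}([0,T],\mathbb{R}^{d}),\ \Pi_{(a,b,V)}(0,\phi;h)=u\Big\},
\end{equation*}
as claimed.

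I expect the only genuinely delicate point to be this last identification step: verifying that $F$ evaluated on a Cameron--Martin lift $S_{2}(h)$ returns the honest finite-variation-driven PDE solution rather than some a priori different limit object, and handling the case $h\in W^{1,2}$ (which need not be Lipschitz) by a further approximation. Everything else is a routine invocation of the extended Schilder theorem and the contraction principle, both quoted from the literature; the continuity input is Theorem \ref{Existence Thm 2nd order}, already proved above.
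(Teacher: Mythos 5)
Your proposal is correct and follows essentially the same route as the paper: quote the large deviation principle for $\mathbf{B}\left( \sqrt{\varepsilon }\cdot \right)$ in $\frac{1}{p}$-H\"{o}lder rough path topology with the Schilder-type rate function $I$, and push it through the continuous solution map $\Pi _{\left( a,b,V\right) }\left( 0,\phi ;\cdot \right)$ of Theorem \ref{Existence Thm 2nd order} via the contraction principle. Your extra care in identifying $\Pi _{\left( a,b,V\right) }\left( 0,\phi ;S_{2}\left( h\right) \right)$ with the classical PDE solution for Cameron--Martin $h$ is a welcome elaboration of a point the paper leaves implicit (cf. the remarks surrounding the Support theorem), but it does not change the argument.
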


\begin{proof}
One of the results proved in \cite{friz-Victoir-2005-Appli} says that the
random variables $\mathbf{B}\left( \sqrt{\varepsilon }\cdot \right) $
satisfy a large deviation principle in $\frac{1}{p}$-H\"{o}lder topology
with good rate function 
\begin{eqnarray*}
I\left( \mathbf{x}\right) &=&\frac{1}{2}\int_{0}^{T}\left\vert \dot{h}%
_{t}\right\vert ^{2}dt\ \ if\ \ S_{2}\left( h\right) =\mathbf{x\ \ }for\
some\ h\in W^{1,2}\left( \left[ 0,T\right] ,\mathbb{R}^{d}\right) \\
&=&+\infty \ \ \ \ otherwise\text{.}
\end{eqnarray*}%
Using the continuity of $\Pi _{\left( a,b,V\right) }\left( 0,\phi ;\mathbf{%
\cdot }\right) $ and the contraction principle, the required large deviation
principle for $\left( \mathbb{P}_{\ast }\Pi _{\left( a,b,V\right) }\left(
0,\phi ;\mathbf{B}\left( \sqrt{\varepsilon }\cdot \right) \right) \right) $
follows immediately.
\end{proof}

\section{SPDEs with Markovian noise\label{Section Markov RPDEs}}

Let $X$ be a Markov process with uniformly elliptic generator in divergence
form (c.f. \cite{St88}). The coefficient matrix in the generator need not
have any regularity (beyond measurability), in which case $X$ is not a
semi-martingale\footnote{%
Nonetheless, sample paths properties of $X$ are very similar to those of
Brownian motion.}. Stochastic area cannot be defined via iterated stochastic
integrals but there are alternative constructions (\cite{lyons-stoica-99}, 
\cite{LejI06}, \cite{friz-victoir-2008-Markov}) that lift $X$ to a
"Markovian" rough path $\mathbf{X}\in C^{\frac{1}{p}-H\ddot{o}l}\left( \left[
0,T\right] ,G^{2}\left( \mathbb{R}^{d}\right) \right) $ for any $p\in \left(
2,3\right) $. With the RPDE approach, we can then give a meaning to the SPDE%
\footnote{%
We assume that the coefficients $a$ and $b$, together with the vector field $%
V$ have enough regularity, (namely the assumptions made at the beginning of
Section \ref{Section Brownian RPDES}) for the RPDE to have unique solutions.}%
\begin{eqnarray}
du\left( t,y\right) &=&L_{t}u\left( t,y\right) dt-\nabla u\left( t,y\right)
\cdot V\left( y\right) d\mathbf{X}_{t}  \label{MarkovSPDE} \\
\,\,u\left( 0,y\right) &=&\phi \left( y\right)  \notag
\end{eqnarray}%
which generalizes Stratonovich SPDEs to "SPDEs with (uniformly elliptic)
Markovian noise". Various convergence results proved in \cite%
{friz-victoir-2008-Markov} together with our RPDE continuity result, give an
appealing probabilistic meaning to such SPDE solutions. For instance, if the
coefficient matrix is mollified (with parameter $\varepsilon $) so that $%
X^{\varepsilon }$ is a semi-martingale, one constructs without difficulties
(c.f. \cite{Kun-90} ) a Stratonovich solution to%
\begin{eqnarray*}
du^{\varepsilon }\left( t,y\right) &=&L_{t}u^{\varepsilon }\left( t,y\right)
dt-\nabla u^{\varepsilon }\left( t,y\right) \cdot V\left( y\right) \circ
dX_{t}^{\varepsilon } \\
\,\,u^{\varepsilon }\left( 0,y\right) &=&\phi \left( y\right)
\end{eqnarray*}%
and as $\varepsilon \rightarrow 0$, the solution $u^{\varepsilon }$
converges in distribution to the solution of (\ref{MarkovSPDE}). Similarly,
if $X$ is replaced by a piecewise linear approximation $X^{n}$, we can solve%
\begin{eqnarray*}
du^{n}\left( t,y\right) &=&L_{t}u^{n}\left( t,y\right) dt-\nabla u^{n}\left(
t,y\right) \cdot V\left( y\right) \circ dX_{t}^{n} \\
\,\,u^{n}\left( 0,y\right) &=&\phi \left( y\right)
\end{eqnarray*}%
as (time-inhomogenous) linear second order PDE and as $n\rightarrow \infty $
we have convergence (in probability) to the solution of (\ref{MarkovSPDE}).
Support and large deviation properties for Markovian rough paths\ were
established in \cite{friz-victoir-2008-Markov} and similar reasoning as in
the Brownian case leads to support and large deviation statements for these
SPDEs with Markovian noise. The details are straight-forward and omitted.

\section{SPDEs with Gaussian noise\label{Section Gaussian RPDEs}}

Let $X=\left( X^{1},\ldots X^{d}\right) $ be a continuous centred Gaussian
process with independent components started at zero, and suppose that its
covariation $R^{X}$, has finite $\rho $-variation (in $2D$-sense) with $\rho
\in \left[ 1,2\right) $, bounded by a H\"{o}lder dominated control\footnote{%
A $2D$ control $\omega $ is H\"{o}lder dominated if there exists a constant $%
C$ such that for all $0\leq s<t\leq T$, $\omega \left( \left[ s,t\right]
^{2}\right) \leq C\left\vert t-s\right\vert $. \ In particular, this implies
that $R_{\rho -var;\left[ s,t\right] ^{2}}^{X}\leq C\left\vert
t-s\right\vert ^{\frac{1}{\rho }}$.}. \ Then from \cite{friz-Victoir-2007a},
we know that for $p\in \left( 2\rho ,4\right) $, $X$ lifts to a geometric H%
\"{o}lder $p$-rough path $\mathbf{X=X}\left( \omega \right) $, a "Gaussian
rough path". With $\mathrm{Lip}^{\gamma }$-vector fields $V=\left(
V_{1},\dots ,V_{d}\right) $, $\gamma >p+3$, and $\phi \in C_{b}^{2}\left( 
\mathbb{R}^{e};\mathbb{R}\right) $, the RPDE

\begin{eqnarray}
du\left( t,y\right) &=&L_{t}u\left( t,y\right) dt-\nabla u\left( t,y\right)
\cdot V\left( y\right) d\mathbf{X}_{t}  \label{pde with gaussian noise} \\
u\left( 0,y\right) &=&\phi \left( y\right)  \notag
\end{eqnarray}%
can be solved for almost every $\omega $ and has the obvious interpretation
of an SPDE with Gaussian noise. (The setup of \cite{friz-Victoir-2007a}
includes (multi-dimensional) Brownian motion with $\rho =1$, fractional
Brownian motion with $\rho =1/\left( 2H\right) $ for $H\in \left(
1/4,1/2\right) $, the case $H>1/2$ being trivial, the Ornstein-Uhlenbeck
process, and the Brownian bridge process, among many other examples).

There is an equivalent statement for most of what has been said in Section %
\ref{Section Markov RPDEs}: various weak and strong approximation results
make the interpretation of the solution to (\ref{pde with gaussian noise})
easy. Replacing $X$ by piecewise linear approximations $X^{n}$ (or mollifier
approximations $X^{\delta }$) reduces (\ref{pde with gaussian noise}) to a
(time-inhomogenous) linear second order PDE, and as $n\rightarrow \infty $
(resp. $\delta \rightarrow 0$), these solutions converge (in probability) to
the solution of (\ref{pde with gaussian noise}).

There is a support result for such Gaussian rough paths (always in the
appropriate $1/p$-H\"{o}lder rough paths topology c.f. \cite%
{friz-Victoir-2007b}) and with the continuity of $\mathbf{X}\mapsto \Pi
_{\left( a,b,V\right) }\left( 0,\phi ;\mathbf{X}\right) $, the solution map
to (\ref{pde with gaussian noise}), we immediately get that the support of
the law of $\Pi _{\left( a,b,V\right) }\left( 0,\phi ;\mathbf{X}\right) $,
in uniform topology, is the closure of all second order PDE solutions $\Pi
_{\left( a,b,V\right) }\left( 0,\phi ;h\right) $ where $h$ $\in \mathcal{H}$%
, the Cameron Martin space associated with $X$.

\begin{remark}
Thanks to the known embedding\ $\mathcal{H}\hookrightarrow C^{\rho \text{-var%
}}\left( \left[ 0,T\right] ,\mathbb{R}^{d}\right) $, $\rho \in \left[
1,2\right) $ (cf. \cite{friz-Victoir-2007a}), we can define $S_{\left[ p%
\right] }\left( h\right) $, $p\in \left( 2\rho ,4\right) $, using Young
integration. \ If we denote $S_{\left[ p\right] }\left( h\right) $ by $%
\mathbf{h}$, then,%
\begin{eqnarray*}
\left\Vert \mathbf{h}_{s,t}\right\Vert &\leq &\left\vert \mathbf{h}%
\right\vert _{\rho -var;\left[ s,t\right] }\leq K_{1}\left\vert h\right\vert
_{\rho -var;\left[ s,t\right] }\ \ (cf.\cite{lyons-98}\ \text{Thm. }2.2.1) \\
&\leq &K_{1}\left\vert h\right\vert _{\mathcal{H}}\sqrt{R_{\rho -var;\left[
s,t\right] ^{2}}^{X}}\ \ \ \ (cf.\cite{friz-Victoir-2007a}\ \text{Prop. }16)
\\
&\leq &K_{2}\left\vert h\right\vert _{\mathcal{H}}\left\vert t-s\right\vert
^{\frac{1}{2\rho }} \\
&\leq &K_{3}\left\vert t-s\right\vert ^{\frac{1}{p}}
\end{eqnarray*}%
and thus $\mathbf{h}\in C^{\frac{1}{p}-H\ddot{o}l}\left( \left[ 0,T\right]
,G^{\left[ p\right] }\left( \mathbb{R}^{d}\right) \right) $. \ Therefore,
when we refer to $\Pi _{\left( a,b,V\right) }\left( 0,\phi ;h\right) $, we
are basically considering $\Pi _{\left( a,b,V\right) }\left( 0,\phi ;\mathbf{%
h}\right) =\Pi _{\left( a,b,V\right) }\left( 0,\phi ;S_{\left[ p\right]
}\left( h\right) \right) $, which we know exists from Theorem \ref{Existence
Thm 2nd order}. \ By a basic theorem in rough path theory (Theorem 1 \cite%
{lyons-98}) $h\longrightarrow \Pi _{\left( a,b,V\right) }\left( 0,\phi ;S_{%
\left[ p\right] }\left( h\right) \right) $ is continuous in $\rho $%
-variation and thus here we are dealing with a Young PDE.
\end{remark}

There is also a LDP for $\left( \delta _{\varepsilon }\mathbf{X}:\varepsilon
>0\right) $ where $\delta _{\varepsilon }$ is the dilation operator which
generalizes scalar multiplication on $\mathbb{R}^{d}$ to $G^{\left[ p\right]
}\left( \mathbb{R}^{d}\right) $, $p\in \left( 2\rho ,\gamma \right) $ (c.f. 
\cite{friz-Victoir-2007}). \ Keeping $u^{\varepsilon }\left( 0,y\right)
=\phi \left( y\right) $ for all $\varepsilon >0$, we abuse notation and
write 
\begin{equation*}
du^{\varepsilon }\left( t,y\right) =L_{t}u^{\varepsilon }\left( t,y\right)
dt-\varepsilon \nabla u^{\varepsilon }\left( t,y\right) \cdot V\left(
y\right) d\mathbf{X}_{t}
\end{equation*}%
rather than%
\begin{equation*}
du^{\varepsilon }\left( t,y\right) =L_{t}u^{\varepsilon }\left( t,y\right)
dt-\nabla u^{\varepsilon }\left( t,y\right) \cdot V\left( y\right) d\left(
\delta _{\varepsilon }\mathbf{X}\right) _{t}\text{.}
\end{equation*}%
Then the laws of $u^{\varepsilon }\left( t,y;\omega \right) $ satisfy a LDP
(in uniform topology) with good rate function%
\begin{equation*}
J\left( u\right) =\inf \left\{ \frac{1}{2}\left\vert h\right\vert _{\mathcal{%
H}}^{2}:h\in \mathcal{H}\text{ and }\Pi _{\left( V\right) }\left( 0,\phi
;h\right) =u\right\} \text{.}
\end{equation*}

\subsection{Density result for non-degenerate first order SPDEs with
Gaussian noise}

We now discuss whether the solution of the first order SPDE 
\begin{eqnarray}
du\left( t,y\right) +\nabla u\left( t,y\right) \cdot V\left( y\right) d%
\mathbf{X}_{t} &=&0  \label{first order gaussian rpde} \\
u\left( 0,y\right) &=&\phi \left( y\right)  \notag
\end{eqnarray}%
at some fixed point in time-space i.e. $\Pi _{\left( V\right) }\left( 0,\phi
;\mathbf{X}\left( \omega \right) \right) \left( t,y\right) =\phi \left( \pi
_{\left( V\right) }\left( 0,y;\overleftarrow{\mathbf{X}\left( \omega \right) 
}^{t}\right) _{t}\right) $, admits a density with respect to Lebesgue
measure. The question obviously reduces to establishing a density for $\pi
_{\left( V\right) }\left( 0,y;\overleftarrow{\mathbf{X}}^{t}\right) _{t}$
and then imposing the necessary non-degeneracy conditions on $\phi $. The
existence of a density for the solution of an RDE driven by a Gaussian
signal was proved in \cite{CASS-FRIZ-VICTOIR-2007} under the following
assumptions on the vector fields and the driving signal.

\begin{condition}[Ellipticity assumption on the vector fields]
\label{Ellipticity condition}The vector fields $V_{1},\ldots ,V_{d}$ span
the tangent space\ at $y$.
\end{condition}

\begin{condition}[{Non-degeneracy of the Gaussian process on $\left[ 0,T%
\right] $}]
\label{Non degeneracy condition on gaussian signal}Fix $T>0$. \ We assume
that for any smooth $f=\left( f_{1}\,,\ldots ,f_{d}\right) :\left[ 0,T\right]
\longrightarrow \mathbb{R}^{d}$,%
\begin{equation*}
\left( \int_{0}^{T}fdh\equiv \sum_{k=l}^{d}\int_{0}^{T}f_{k}dh^{k}=0\ \
\forall h\ \in \mathcal{H}\right) \Longrightarrow f\equiv 0
\end{equation*}%
where $\mathcal{H}$ is the Cameron Martin space associated with the Gaussian
process.
\end{condition}

As remarked in the same paper, non-degeneracy on $\left[ 0,T\right] $
implies non-degeneracy on $\left[ 0,t\right] $ for any $t\in \left( 0,T%
\right] $. \ We then have the following theorem.

\begin{theorem}
\label{Existence of density for solution of rde}(c.f. \cite%
{CASS-FRIZ-VICTOIR-2007}) Let $\mathbf{X}$ be a natural lift of a
continuous, centered Gaussian process with independent components $X=\left(
X_{1},\ldots ,X_{d}\right) $, with finite $\rho \in \left[ 1,2\right) $%
-variation of the covariance, bounded by a H\"{o}lder dominated control, and
non-degenerate in the sense of Condition \ref{Non degeneracy condition on
gaussian signal}. \ Let $V=\left( V_{1},\ldots ,V_{d}\right) $ be a
collection of $\mathrm{Lip}^{\gamma }$-vector fields on $\mathbb{R}%
^{e},\gamma >2\rho $, which satisfy the ellipticity Condition \ref%
{Ellipticity condition}. \ Then the solution of the rough differential
equation,%
\begin{equation*}
dY_{t}=V\left( Y_{t}\right) d\mathbf{X}_{t}\ \ \ \ Y_{0}=y
\end{equation*}%
admits a density at all times $t\in \left( 0,T\right] $ with respect to
Lebesgue measure on $\mathbb{R}^{e}$.
\end{theorem}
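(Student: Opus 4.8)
The plan is to deduce this from the Malliavin-calculus argument of \cite{CASS-FRIZ-VICTOIR-2007}; since the statement is quoted verbatim from that paper, I only sketch the strategy. Regard $X=(X^1,\dots,X^d)$ as living on an abstract Wiener space with Cameron--Martin space $\mathcal{H}$, write $Y_t:=\pi_{(V)}(0,y;\mathbf{X})_t$, and let $J_t:=D_y\pi_{(V)}(0,y;\mathbf{X})_t$ be the Jacobian of the flow. First I would record that $J_t$ solves the linearised RDE along $Y$, is invertible, and that $J_t$ together with $J_t^{-1}$ has finite moments of all orders --- this follows from the deterministic bound (\ref{estimate on derivative of inverse flow}) combined with Fernique-type integrability of $\Vert\mathbf{X}\Vert_{p\text{-var}}^{p}$, using $\gamma>2\rho$. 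Next, using the regularity of $\mathcal{H}$ relative to the rough path $\mathbf{X}$ developed in \cite{CASS-FRIZ-VICTOIR-2007} (complementary-Young-type control of the translated rough paths $T_h\mathbf{X}$, $h\in\mathcal{H}$), the map $h\mapsto\pi_{(V)}(0,y;T_h\mathbf{X})_t$ is differentiable, so that $Y_t\in\mathbb{D}^{1,2}_{\mathrm{loc}}(\mathbb{R}^e)$ with directional Malliavin derivative
\[
D_hY_t \;=\; J_t\int_0^t J_s^{-1}\,V(Y_s)\,dh_s,\qquad h\in\mathcal{H}.
\]

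With differentiability in hand I would invoke the Bouleau--Hirsch criterion: it suffices to show that the Malliavin covariance matrix $\sigma_t=\big(\langle DY_t^i,DY_t^j\rangle_{\mathcal{H}}\big)_{1\le i,j\le e}$ is a.s.\ invertible. Suppose $v^{\top}\sigma_t v=0$ for some $v\in\mathbb{R}^e\setminus\{0\}$; then $D_h(v^{\top}Y_t)=0$ for every $h\in\mathcal{H}$, i.e.\ setting $w^{\top}:=v^{\top}J_t$ (so $w\neq 0$ since $J_t$ is a.s.\ invertible) and $M_s:=J_s^{-1}V(Y_s)$,
\[
\int_0^t w^{\top}M_s\,dh_s \;=\; 0\qquad\text{for all }h\in\mathcal{H}.
\]
The $\mathbb{R}^d$-valued function $s\mapsto M_s^{\top}w$ is continuous on $[0,t]$, so Condition \ref{Non degeneracy condition on gaussian signal} --- valid on $[0,t]$ as noted after its statement --- forces $M_s^{\top}w\equiv 0$ on $[0,t]$. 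Evaluating at $s=0$, where $J_0=\mathrm{Id}$ and hence $M_0=(V_1(y),\dots,V_d(y))$, yields $\langle w,V_i(y)\rangle=0$ for all $i$, contradicting the ellipticity Condition \ref{Ellipticity condition}. Hence $\sigma_t$ is a.s.\ invertible, and Bouleau--Hirsch gives a density for $Y_t$ on $\mathbb{R}^e$ for every $t\in(0,T]$.

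I expect the algebraic non-degeneracy step to be essentially routine; the main obstacle is the first step --- rigorously establishing Malliavin differentiability of RDE solutions driven by a Gaussian rough path. That requires uniform $p$-variation bounds for the translated rough paths $T_h\mathbf{X}$, the complementary regularity embedding of $\mathcal{H}$, and sharp integrability estimates for $J$ and $J^{-1}$, all of which are carried out in \cite{CASS-FRIZ-VICTOIR-2007}. Accordingly, the clean way to present this is to cite \cite{CASS-FRIZ-VICTOIR-2007} for the differentiability and integrability and only spell out the short Bouleau--Hirsch/non-degeneracy deduction above.
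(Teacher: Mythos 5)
The paper gives no proof of this statement at all --- it is quoted from \cite{CASS-FRIZ-VICTOIR-2007} and used as a black box --- and your sketch (Cameron--Martin differentiability of the It\^{o} map and the Duhamel formula $D_hY_t=J_t\int_0^t J_s^{-1}V(Y_s)\,dh_s$ taken from that reference, followed by Bouleau--Hirsch and the non-degeneracy/ellipticity argument) faithfully reproduces the cited proof, so your approach coincides with the paper's treatment of deferring to \cite{CASS-FRIZ-VICTOIR-2007}. The only wrinkle worth recording is that Condition \ref{Non degeneracy condition on gaussian signal} is phrased for smooth $f$, whereas your $s\mapsto w^{\top}J_s^{-1}V(Y_s)$ is merely continuous (of finite $p$-variation), so the final step needs the approximation/Young-regularity argument carried out in \cite{CASS-FRIZ-VICTOIR-2007} rather than a literal application of the condition as stated.
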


Using Theorem \ref{Existence Theorem} and the above, we can prove the
following result on the existence of a density for the solution of a RPDE.

\begin{theorem}
Let $\mathbf{X}$ be a natural lift of a continuous, centered Gaussian
process with independent components $X=\left( X_{1},\ldots ,X_{d}\right) $,
with finite $\rho \in \left[ 1,2\right) $-variation of the covariance,
bounded by a H\"{o}lder dominated control, and non-degenerate in the sense
of Condition \ref{Non degeneracy condition on gaussian signal}. Let $%
V=\left( V_{1},\ldots ,V_{d}\right) $ be a collection of $\mathrm{Lip}%
^{\gamma }\left( \mathbb{R}^{e}\right) $ vector fields, $\gamma >2\rho $,
and suppose\footnote{%
Cf. remark \ref{phi_unbd}.} that $\phi \in C^{1}\left( \mathbb{R}^{e};%
\mathbb{R}\right) $ is non-degenerate, i.e. $\nabla \phi \neq 0$ everywhere.
With $\mathbf{X=X}\left( \omega \right) $, the solution $u\left( t,y\right)
=u\left( t,y;\omega \right) $ to the random RPDE%
\begin{eqnarray}
du\left( t,y\right) +\nabla u\left( t,y\right) \cdot V\left( y\right) d%
\mathbf{X}_{t} &=&0 \\
u\left( 0,y\right)  &=&\phi \left( y\right)   \notag
\end{eqnarray}%
has a density with respect to the Lebesgue measure on $\mathbb{R}$,\ for
each $t\in \left( 0,T\right] $ and for each $y\in \mathbb{R}^{e}$ for which
Condition \ref{Ellipticity condition} holds.
\end{theorem}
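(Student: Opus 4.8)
The plan is to reduce the density question for $u(t,y)=\phi\bigl(\pi_{(V)}(0,y;\overleftarrow{\mathbf{X}}^{t})_{t}\bigr)$ to the already-established density result for the RDE flow (Theorem~\ref{Existence of density for solution of rde}), composed with a change-of-variables argument for $\phi$. First I would fix $t\in(0,T]$ and $y\in\mathbb{R}^{e}$ satisfying Condition~\ref{Ellipticity condition}, and recall (as in the remark preceding the theorem) that non-degeneracy of the Gaussian process on $[0,T]$ passes to $[0,t]$, and that the time-reversal $\overleftarrow{\mathbf{X}}^{t}$ is again a natural lift of a continuous centered Gaussian process with independent components whose covariance has finite $\rho$-variation bounded by a H\"older dominated control — this is because time reversal acts continuously on the Cameron--Martin space and preserves the relevant $2D$ $\rho$-variation bounds. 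Moreover the ellipticity Condition~\ref{Ellipticity condition} on the vector fields at $y$ is a condition on the $V_{i}$ alone and is unaffected by reversing the driver. Hence Theorem~\ref{Existence of density for solution of rde} applies with driving signal $\overleftarrow{\mathbf{X}}^{t}$ and yields that the random variable $Z:=\pi_{(V)}(0,y;\overleftarrow{\mathbf{X}}^{t})_{t}$ has a density on $\mathbb{R}^{e}$ with respect to Lebesgue measure.

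Next I would push this density forward through $\phi$. Since $\gamma>2\rho$ we have $\gamma>\rho\geq 1$, so by Theorem~\ref{Thm ULT} (and Corollary~\ref{regularity of solutions}) the RDE is well-posed and the map $y\mapsto \pi_{(V)}(0,y;\overleftarrow{\mathbf{X}}^{t})_{t}$ is a $C^{1}$ diffeomorphism of $\mathbb{R}^{e}$; but here we need the law of $Z$ for \emph{fixed} starting point, which is exactly what Theorem~\ref{Existence of density for solution of rde} delivers. Now $u(t,y;\omega)=\phi(Z)$ with $\phi\in C^{1}(\mathbb{R}^{e};\mathbb{R})$ and $\nabla\phi\neq0$ everywhere. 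The level set $\{\phi=c\}$ is then, by the implicit function theorem, a $C^{1}$ hypersurface of $\mathbb{R}^{e}$, hence has Lebesgue measure zero in $\mathbb{R}^{e}$; since $Z$ has a density, $\mathbb{P}(\phi(Z)=c)=\mathbb{P}(Z\in\{\phi=c\})=0$ for every $c$. More quantitatively, one writes for any bounded Borel $g:\mathbb{R}\to\mathbb{R}$
\[
\mathbb{E}\bigl[g(\phi(Z))\bigr]=\int_{\mathbb{R}^{e}}g(\phi(z))\,f_{Z}(z)\,dz,
\]
and applies the coarea formula: with $\Sigma_{c}=\phi^{-1}(c)$ and $\mathcal{H}^{e-1}$ the surface measure,
\[
\int_{\mathbb{R}^{e}}g(\phi(z))f_{Z}(z)\,dz
=\int_{\mathbb{R}}g(c)\left(\int_{\Sigma_{c}}\frac{f_{Z}(z)}{|\nabla\phi(z)|}\,d\mathcal{H}^{e-1}(z)\right)dc,
\]
which exhibits $c\mapsto \int_{\Sigma_{c}} f_{Z}(z)|\nabla\phi(z)|^{-1}\,d\mathcal{H}^{e-1}(z)$ as a density for $u(t,y;\omega)=\phi(Z)$ on $\mathbb{R}$. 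Finiteness for a.e.\ $c$ is automatic from $\int_{\mathbb{R}^{e}}f_{Z}=1$ and the coarea formula applied with $g\equiv1$.

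The main obstacle — modest but real — is verifying that the hypotheses of Theorem~\ref{Existence of density for solution of rde} genuinely survive time reversal: namely that $\overleftarrow{\mathbf{X}}^{t}$ is still a \emph{natural} lift of a centered Gaussian process with independent components, with the covariance of $s\mapsto X_{t-s}-X_{t}$ (for $s\in[0,t]$) having finite $\rho$-variation bounded by a H\"older dominated control, and that the non-degeneracy Condition~\ref{Non degeneracy condition on gaussian signal} is inherited. For independence of components and centeredness this is immediate; for the $\rho$-variation control one checks that the $2D$ control dominating $R^{X}$ on $[s,t]^{2}$ transfers, under the reflection $(u,v)\mapsto(t-u,t-v)$, to a control with the same H\"older domination constant. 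For non-degeneracy, one uses that the Cameron--Martin space of the reversed process is $\{h(t-\cdot)-h(t):h\in\mathcal{H}\}$ and that $\int_{0}^{t}f(s)\,d\bigl(h(t-s)\bigr)=-\int_{0}^{t}f(t-u)\,dh(u)$, so that the annihilator condition for the reversed process is equivalent to that for the original one. Once these routine transfers are in place, the argument closes; I would also note in passing (cf.\ Remark~\ref{phi_unbd}) that allowing $\phi$ merely $C^{1}$ and unbounded is harmless here, since the density claim is a fixed-$(t,y)$ statement about the distribution of $\phi(Z)$ and does not require the continuity of the solution map in uniform topology.
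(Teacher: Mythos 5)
Your proposal is correct and follows essentially the same route as the paper: reduce to the density theorem for Gaussian-driven RDEs applied to the time-reversed rough path $\overleftarrow{\mathbf{X}}^{t}$, check that the non-degeneracy Condition \ref{Non degeneracy condition on gaussian signal} survives time reversal via the Cameron--Martin change-of-variable $\int_{0}^{t}f_{s}\,dh_{t-s}=-\int_{0}^{t}f_{t-s}\,dh_{s}$, and then push the resulting density on $\mathbb{R}^{e}$ forward through the non-degenerate $\phi$. Your coarea-formula step and the explicit check that the reflected $2D$ control remains H\"older dominated merely flesh out what the paper dispatches with ``follows immediately'' and ``is again a Gaussian geometric $p$-rough path'', so no substantive difference remains (the aside about $y\mapsto\pi_{(V)}(0,y;\overleftarrow{\mathbf{X}}^{t})_{t}$ being a diffeomorphism is unused and can be dropped).
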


\begin{proof}
Fix $t\in \left( 0,T\right] $, and choose $y\in \mathbb{R}^{e}$ such that
the vector fields $V_{1},\ldots ,V_{d}$ span the tangent space at $y$. \ We
have to show that 
\begin{equation*}
u\left( t,y\right) =\phi \left( \pi _{\left( V\right) }\left( 0,y;%
\overleftarrow{\mathbf{X}}^{t}\right) _{t}\right) 
\end{equation*}%
has a density. \ 

We first note that $\overleftarrow{\mathbf{X}}_{\cdot }^{t}$ is again a
Gaussian geometric $p$-rough path, $p\in \left( 2\rho ,4\right) $, defined
on $\left[ 0,t\right] $. \ We want to prove that $\overleftarrow{X}^{t}$
satisfies the non-degeneracy Condition \ref{Non degeneracy condition on
gaussian signal} on $\left[ 0,t\right] $. \ Let $f$ be a smooth function and
suppose that $\int fdg\equiv 0$ for all $g\in \mathcal{G}$, the Cameron
Martin space associated with $\overleftarrow{X}^{t}$. \ Recall that elements
of $\mathcal{G}$ are of the form, $g_{s}=\mathbb{E}\left( \overleftarrow{X}%
_{s}^{t}\xi \left( g\right) \right) $ where $\xi \left( g\right) $ is a
Gaussian random variable. \ For $s\in \left[ 0,t\right] $,%
\begin{equation*}
g_{s}=\mathbb{E}\left( \overleftarrow{X}_{s}^{t}\xi \left( g\right) \right) =%
\mathbb{E}\left( X_{t-s}\xi \left( g\right) \right) =h_{t-s}
\end{equation*}%
for some $h\in \mathcal{H}$, the Cameron Martin space associated with $X$. \
Thus 
\begin{equation*}
\left( \int_{0}^{t}f_{s}dg_{s}\equiv 0\ \ \forall g\in \mathcal{G}\right) 
\mathcal{\Leftrightarrow }\left( \int_{0}^{t}f_{s}dh_{t-s}\equiv 0\ \forall
h\in \mathcal{H}\right) \text{.}
\end{equation*}%
Since $f$ is smooth, the above integrals are Riemann-Stieltjes integrals,
and so, using a simple change of variable, we get that 
\begin{equation*}
\int_{0}^{t}f_{s}dh_{t-s}=-\int_{0}^{t}f_{t-s}dh_{s}\equiv 0\ \forall h\in 
\mathcal{H}\text{.}
\end{equation*}%
But $-f_{t-\cdot }$ is of course a smooth function, and hence it follows
from the non-degeneracy condition on $X$, that $-f_{t-\cdot }\equiv 0$. \
This implies that $f\equiv 0$. \ 

Therefore the Gaussian process $\overleftarrow{X}_{\cdot }^{t}$ also
satisfies the non-degeneracy Condition \ref{Non degeneracy condition on
gaussian signal}, and hence we can deduce from our choice of $y\in \mathbb{R}%
^{e}$ and Theorem \ref{Existence of density for solution of rde}, that the
random variable $\pi _{\left( V\right) }\left( 0,y;\overleftarrow{\mathbf{X}}%
^{t}\right) _{t}$ has a density with respect to the Lebesgue measure on $%
\mathbb{R}^{e}$.

From the non-degeneracy assumption on the initial function $\phi $, the
existence of a density for $u\left( t,y\right) $ now follows immediately.
\end{proof}

\begin{acknowledgement}
The authors would like to thank M. Hairer for some feedback on the final
version of this work.
\end{acknowledgement}

\bibliographystyle{plain}
\bibliography{acompat,rough3}

\end{document}